\documentclass{article}
\usepackage{url}
%%%%%%%%%%%%%%%%%%%%%%%%%%%%%%%%%%%%%%%%%%%%%%%%%%%%%%%%%%%%%%%%%%%%%%%%%%%%%%%%%%%%%%%%%%%%%%%%%%%%%%%%%%%%%%%%%%%%%%%%%%%%%%%%%%%%%%%%%%%%%%%%%%%%%%%%%%%%%%%%%%%%%%%%%%%%%%%%%%%%%%%%%%%%%%%%%%%%%%%%%%%%%%%%%%%%%%%%%%%%%%%%%%%%%%%%%%%%%%%%%%%%%%%%%%%%
\usepackage{amsfonts}
\usepackage{amsmath}
\usepackage{amsthm}
\usepackage{amssymb}
\usepackage{graphicx}
\usepackage{caption}
\usepackage{subcaption}
\usepackage{verbatim}
\usepackage{algorithmic}
\usepackage{algorithm}
\usepackage{mathtools}
\usepackage{color}
\usepackage{eurosym}
\usepackage{lineno}

\usepackage{amsfonts}

\nolinenumbers

\setcounter{MaxMatrixCols}{10}
%TCIDATA{OutputFilter=LATEX.DLL}
%TCIDATA{Version=5.50.0.2960}
%TCIDATA{<META NAME="SaveForMode" CONTENT="1">}
%TCIDATA{BibliographyScheme=Manual}
%TCIDATA{Created=Monday, April 11, 2016 22:41:51}
%TCIDATA{LastRevised=Saturday, January 27, 2018 23:34:55}
%TCIDATA{<META NAME="GraphicsSave" CONTENT="32">}
%TCIDATA{<META NAME="DocumentShell" CONTENT="Standard LaTeX\Blank - Standard LaTeX Article">}
%TCIDATA{Language=American English}
%TCIDATA{CSTFile=40 LaTeX article.cst}

\newcommand{\algrule}[1][.2pt]{\par\vskip.5\baselineskip\hrule height #1\par\vskip.5\baselineskip}
\newtheorem{theorem}{Theorem}

\newtheorem{axiom}[theorem]{Axiom}

\newtheorem{setting}[theorem]{Setting}

\newtheorem{conjecture}[theorem]{Conjecture}
\newtheorem{corollary}[theorem]{Corollary}

\newtheorem{definition}[theorem]{Definition}
\newtheorem{example}[theorem]{Example}
\newtheorem{exercise}[theorem]{Exercise}
\newtheorem{lemma}[theorem]{Lemma}

\newtheorem{proposition}[theorem]{Proposition}
\newtheorem{remark}[theorem]{Remark}

\newcommand{\Var}{\mathrm{Var}}
\newcounter{counter_summary}
% Macros for Scientific Word 4.0 documents saved with the LaTeX filter.
% Copyright (C) 2002 Mackichan Software, Inc.

\typeout{TCILATEX Macros for Scientific Word 4.0 <12 Mar 2002>.}
\typeout{NOTICE:  This macro file is NOT proprietary and may be 
freely copied and distributed.}
\makeatletter

%%%%%%%%%%%%%%%%%%%%%
% FMTeXButton
% This is used for putting TeXButtons in the 
% frontmatter of a document. Add a line like
% \QTagDef{FMTeXButton}{101}{} to the filter 
% section of the cst being used. Also add a
% new section containing:
%     [f_101]
%     ALIAS=FMTexButton
%     TAG_TYPE=FIELD
%     TAG_LEADIN=TeX Button:
%
% It also works to put \defs in the preamble after 
% the \input tcilatex

%
%%%%%%%%%%%%%%%%%%%%%%
% macros for time
\newcount\@hour\newcount\@minute\chardef\@x10\chardef\@xv60
\def\tcitime{
\def\@time{%
  \@minute\time\@hour\@minute\divide\@hour\@xv
  \ifnum\@hour<\@x 0\fi\the\@hour:%
  \multiply\@hour\@xv\advance\@minute-\@hour
  \ifnum\@minute<\@x 0\fi\the\@minute
  }}%

%%%%%%%%%%%%%%%%%%%%%%
% macro for hyperref and msihyperref
%\@ifundefined{hyperref}{\def\hyperref#1#2#3#4{#2\ref{#4}#3}}{}

\def\x@hyperref#1#2#3{%
   % Turn off various catcodes before reading parameter 4
   \catcode`\~ = 12
   \catcode`\$ = 12
   \catcode`\_ = 12
   \catcode`\# = 12
   \catcode`\& = 12
   \y@hyperref{#1}{#2}{#3}%
}

\def\y@hyperref#1#2#3#4{%
   #2\ref{#4}#3
   \catcode`\~ = 13
   \catcode`\$ = 3
   \catcode`\_ = 8
   \catcode`\# = 6
   \catcode`\& = 4
}

\@ifundefined{hyperref}{\let\hyperref\x@hyperref}{}
\@ifundefined{msihyperref}{\let\msihyperref\x@hyperref}{}

% macro for external program call
\@ifundefined{qExtProgCall}{\def\qExtProgCall#1#2#3#4#5#6{\relax}}{}
%%%%%%%%%%%%%%%%%%%%%%
%
% macros for graphics
%
%
%
\def\QCTOpt[#1]#2{%
  \def\QCTOptB{#1}
  \def\QCTOptA{#2}
}
\def\QCTNOpt#1{%
  \def\QCTOptA{#1}
  \let\QCTOptB\empty
}
\def\Qct{%
  \@ifnextchar[{%
    \QCTOpt}{\QCTNOpt}
}
\def\QCBOpt[#1]#2{%
  \def\QCBOptB{#1}%
  \def\QCBOptA{#2}%
}
\def\QCBNOpt#1{%
  \def\QCBOptA{#1}%
  \let\QCBOptB\empty
}
\def\Qcb{%
  \@ifnextchar[{%
    \QCBOpt}{\QCBNOpt}%
}
\def\PrepCapArgs{%
  \ifx\QCBOptA\empty
    \ifx\QCTOptA\empty
      {}%
    \else
      \ifx\QCTOptB\empty
        {\QCTOptA}%
      \else
        [\QCTOptB]{\QCTOptA}%
      \fi
    \fi
  \else
    \ifx\QCBOptA\empty
      {}%
    \else
      \ifx\QCBOptB\empty
        {\QCBOptA}%
      \else
        [\QCBOptB]{\QCBOptA}%
      \fi
    \fi
  \fi
}
\newcount\GRAPHICSTYPE
%\GRAPHICSTYPE 0 is for TurboTeX
%\GRAPHICSTYPE 1 is for DVIWindo (PostScript)
%%%(removed)%\GRAPHICSTYPE 2 is for psfig (PostScript)
\GRAPHICSTYPE=\z@
\def\GRAPHICSPS#1{%
 \ifcase\GRAPHICSTYPE%\GRAPHICSTYPE=0
   \special{ps: #1}%
 \or%\GRAPHICSTYPE=1
   \special{language "PS", include "#1"}%
%%%\or%\GRAPHICSTYPE=2
%%%  #1%
 \fi
}%
%
%
%
% \graffile{ body }                                  %#1
%          { contentswidth (scalar)  }               %#2
%          { contentsheight (scalar) }               %#3
%          { vertical shift when in-line (scalar) }  %#4

\def\graffile#1#2#3#4{%
%%% \ifnum\GRAPHICSTYPE=\tw@
%%%  %Following if using psfig
%%%  \@ifundefined{psfig}{\input psfig.tex}{}%
%%%  \psfig{file=#1, height=#3, width=#2}%
%%% \else
  %Following for all others
  % JCS - added BOXTHEFRAME, see below
    \bgroup
	   \@inlabelfalse
       \leavevmode
       \@ifundefined{bbl@deactivate}{\def~{\string~}}{\activesoff}%
        \raise -#4 \BOXTHEFRAME{%
           \hbox to #2{\raise #3\hbox to #2{\null #1\hfil}}}%
    \egroup
}%
%
% A box for drafts
\def\draftbox#1#2#3#4{%
 \leavevmode\raise -#4 \hbox{%
  \frame{\rlap{\protect\tiny #1}\hbox to #2%
   {\vrule height#3 width\z@ depth\z@\hfil}%
  }%
 }%
}%
\newcount\@msidraft
\@msidraft=\z@
\let\nographics=\@msidraft
\newif\ifwasdraft
\wasdraftfalse

%  \GRAPHIC{ body }                                  %#1
%          { draft name }                            %#2
%          { contentswidth (scalar)  }               %#3
%          { contentsheight (scalar) }               %#4
%          { vertical shift when in-line (scalar) }  %#5
\def\GRAPHIC#1#2#3#4#5{%
   \ifnum\@msidraft=\@ne\draftbox{#2}{#3}{#4}{#5}%
   \else\graffile{#1}{#3}{#4}{#5}%
   \fi
}
\def\addtoLaTeXparams#1{%
    \edef\LaTeXparams{\LaTeXparams #1}}%
%
% JCS -  added a switch BoxFrame that can 
% be set by including X in the frame params.
% If set a box is drawn around the frame.

\newif\ifBoxFrame \BoxFramefalse
\newif\ifOverFrame \OverFramefalse
\newif\ifUnderFrame \UnderFramefalse

\def\BOXTHEFRAME#1{%
   \hbox{%
      \ifBoxFrame
         \frame{#1}%
      \else
         {#1}%
      \fi
   }%
}

\def\doFRAMEparams#1{\BoxFramefalse\OverFramefalse\UnderFramefalse\readFRAMEparams#1\end}%
\def\readFRAMEparams#1{%
 \ifx#1\end%
  \let\next=\relax
  \else
  \ifx#1i\dispkind=\z@\fi
  \ifx#1d\dispkind=\@ne\fi
  \ifx#1f\dispkind=\tw@\fi
  \ifx#1t\addtoLaTeXparams{t}\fi
  \ifx#1b\addtoLaTeXparams{b}\fi
  \ifx#1p\addtoLaTeXparams{p}\fi
  \ifx#1h\addtoLaTeXparams{h}\fi
  \ifx#1X\BoxFrametrue\fi
  \ifx#1O\OverFrametrue\fi
  \ifx#1U\UnderFrametrue\fi
  \ifx#1w
    \ifnum\@msidraft=1\wasdrafttrue\else\wasdraftfalse\fi
    \@msidraft=\@ne
  \fi
  \let\next=\readFRAMEparams
  \fi
 \next
 }%
%
%Macro for In-line graphics object
%   \IFRAME{ contentswidth (scalar)  }               %#1
%          { contentsheight (scalar) }               %#2
%          { vertical shift when in-line (scalar) }  %#3
%          { draft name }                            %#4
%          { body }                                  %#5
%          { caption}                                %#6

\def\IFRAME#1#2#3#4#5#6{%
      \bgroup
      \let\QCTOptA\empty
      \let\QCTOptB\empty
      \let\QCBOptA\empty
      \let\QCBOptB\empty
      #6%
      \parindent=0pt
      \leftskip=0pt
      \rightskip=0pt
      \setbox0=\hbox{\QCBOptA}%
      \@tempdima=#1\relax
      \ifOverFrame
          % Do this later
          \typeout{This is not implemented yet}%
          \show\HELP
      \else
         \ifdim\wd0>\@tempdima
            \advance\@tempdima by \@tempdima
            \ifdim\wd0 >\@tempdima
               \setbox1 =\vbox{%
                  \unskip\hbox to \@tempdima{\hfill\GRAPHIC{#5}{#4}{#1}{#2}{#3}\hfill}%
                  \unskip\hbox to \@tempdima{\parbox[b]{\@tempdima}{\QCBOptA}}%
               }%
               \wd1=\@tempdima
            \else
               \textwidth=\wd0
               \setbox1 =\vbox{%
                 \noindent\hbox to \wd0{\hfill\GRAPHIC{#5}{#4}{#1}{#2}{#3}\hfill}\\%
                 \noindent\hbox{\QCBOptA}%
               }%
               \wd1=\wd0
            \fi
         \else
            \ifdim\wd0>0pt
              \hsize=\@tempdima
              \setbox1=\vbox{%
                \unskip\GRAPHIC{#5}{#4}{#1}{#2}{0pt}%
                \break
                \unskip\hbox to \@tempdima{\hfill \QCBOptA\hfill}%
              }%
              \wd1=\@tempdima
           \else
              \hsize=\@tempdima
              \setbox1=\vbox{%
                \unskip\GRAPHIC{#5}{#4}{#1}{#2}{0pt}%
              }%
              \wd1=\@tempdima
           \fi
         \fi
         \@tempdimb=\ht1
         %\advance\@tempdimb by \dp1
         \advance\@tempdimb by -#2
         \advance\@tempdimb by #3
         \leavevmode
         \raise -\@tempdimb \hbox{\box1}%
      \fi
      \egroup%
}%
%
%Macro for Display graphics object
%   \DFRAME{ contentswidth (scalar)  }               %#1
%          { contentsheight (scalar) }               %#2
%          { draft label }                           %#3
%          { name }                                  %#4
%          { caption}                                %#5
\def\DFRAME#1#2#3#4#5{%
  \hfil\break
  \bgroup
     \leftskip\@flushglue
	 \rightskip\@flushglue
	 \parindent\z@
	 \parfillskip\z@skip
     \let\QCTOptA\empty
     \let\QCTOptB\empty
     \let\QCBOptA\empty
     \let\QCBOptB\empty
	 \vbox\bgroup
        \ifOverFrame 
           #5\QCTOptA\par
        \fi
        \GRAPHIC{#4}{#3}{#1}{#2}{\z@}%
        \ifUnderFrame 
           \break#5\QCBOptA
        \fi
	 \egroup
   \egroup
   \break
}%
%
%Macro for Floating graphic object
%   \FFRAME{ framedata f|i tbph x F|T }              %#1
%          { contentswidth (scalar)  }               %#2
%          { contentsheight (scalar) }               %#3
%          { caption }                               %#4
%          { label }                                 %#5
%          { draft name }                            %#6
%          { body }                                  %#7
\def\FFRAME#1#2#3#4#5#6#7{%
 %If float.sty loaded and float option is 'h', change to 'H'  (gp) 1998/09/05
  \@ifundefined{floatstyle}
    {%floatstyle undefined (and float.sty not present), no change
     \begin{figure}[#1]%
    }
    {%floatstyle DEFINED
	 \ifx#1h%Only the h parameter, change to H
      \begin{figure}[H]%
	 \else
      \begin{figure}[#1]%
	 \fi
	}
  \let\QCTOptA\empty
  \let\QCTOptB\empty
  \let\QCBOptA\empty
  \let\QCBOptB\empty
  \ifOverFrame
    #4
    \ifx\QCTOptA\empty
    \else
      \ifx\QCTOptB\empty
        \caption{\QCTOptA}%
      \else
        \caption[\QCTOptB]{\QCTOptA}%
      \fi
    \fi
    \ifUnderFrame\else
      \label{#5}%
    \fi
  \else
    \UnderFrametrue%
  \fi
  \begin{center}\GRAPHIC{#7}{#6}{#2}{#3}{\z@}\end{center}%
  \ifUnderFrame
    #4
    \ifx\QCBOptA\empty
      \caption{}%
    \else
      \ifx\QCBOptB\empty
        \caption{\QCBOptA}%
      \else
        \caption[\QCBOptB]{\QCBOptA}%
      \fi
    \fi
    \label{#5}%
  \fi
  \end{figure}%
 }%
%
%
%    \FRAME{ framedata f|i tbph x F|T }              %#1
%          { contentswidth (scalar)  }               %#2
%          { contentsheight (scalar) }               %#3
%          { vertical shift when in-line (scalar) }  %#4
%          { caption }                               %#5
%          { label }                                 %#6
%          { name }                                  %#7
%          { body }                                  %#8
%
%    framedata is a string which can contain the following
%    characters: idftbphxFT
%    Their meaning is as follows:
%             i, d or f : in-line, display, or floating
%             t,b,p,h   : LaTeX floating placement options
%             x         : fit contents box to contents
%             F or T    : Figure or Table. 
%                         Later this can expand
%                         to a more general float class.
%
%
\newcount\dispkind%

\def\makeactives{
  \catcode`\"=\active
  \catcode`\;=\active
  \catcode`\:=\active
  \catcode`\'=\active
  \catcode`\~=\active
}
\bgroup
   \makeactives
   \gdef\activesoff{%
      \def"{\string"}%
      \def;{\string;}%
      \def:{\string:}%
      \def'{\string'}%
      \def~{\string~}%
      %\bbl@deactivate{"}%
      %\bbl@deactivate{;}%
      %\bbl@deactivate{:}%
      %\bbl@deactivate{'}%
    }
\egroup

\def\FRAME#1#2#3#4#5#6#7#8{%
 \bgroup
 \ifnum\@msidraft=\@ne
   \wasdrafttrue
 \else
   \wasdraftfalse%
 \fi
 \def\LaTeXparams{}%
 \dispkind=\z@
 \def\LaTeXparams{}%
 \doFRAMEparams{#1}%
 \ifnum\dispkind=\z@\IFRAME{#2}{#3}{#4}{#7}{#8}{#5}\else
  \ifnum\dispkind=\@ne\DFRAME{#2}{#3}{#7}{#8}{#5}\else
   \ifnum\dispkind=\tw@
    \edef\@tempa{\noexpand\FFRAME{\LaTeXparams}}%
    \@tempa{#2}{#3}{#5}{#6}{#7}{#8}%
    \fi
   \fi
  \fi
  \ifwasdraft\@msidraft=1\else\@msidraft=0\fi{}%
  \egroup
 }%
%
% This macro added to let SW gobble a parameter that
% should not be passed on and expanded. 

\def\TEXUX#1{"texux"}

%
% Macros for text attributes:
%
%
%
%
%%%%%%%%%%%%%%%%%%%%%%%%%%%%%%%%%%%%%%%%%%%%%%%%%%%%%%%%%%%%%%%%%%%%%%%%
%
%
% macros for user - defined functions
%
%
% macro for unit names
%

%
% miscellaneous 
\long\def\QQQ#1#2{%
     \long\expandafter\def\csname#1\endcsname{#2}}%
\@ifundefined{QTP}{\def\QTP#1{}}{}
\@ifundefined{QEXCLUDE}{\def\QEXCLUDE#1{}}{}
\@ifundefined{Qlb}{}{}
\@ifundefined{Qlt}{}{}
\long\def\QQA#1#2{}%
\def\QTR#1#2{{\csname#1\endcsname {#2}}}%
\def\EXPAND#1[#2]#3{}%
\def\NOEXPAND#1[#2]#3{}%
\def\LaTeXparent#1{}%
\def\ChildStyles#1{}%
\def\ChildDefaults#1{}%
\def\QTagDef#1#2#3{}%

% Constructs added with Scientific Notebook
\@ifundefined{correctchoice}{}{}
\@ifundefined{HTML}{\def\HTML#1{\relax}}{}
\@ifundefined{TCIIcon}{\def\TCIIcon#1#2#3#4{\relax}}{}
\if@compatibility
  \typeout{Not defining UNICODE  U or CustomNote commands for LaTeX 2.09.}
\else
  \providecommand{\UNICODE}[2][]{\protect\rule{.1in}{.1in}}
  \providecommand{\U}[1]{\protect\rule{.1in}{.1in}}
  
\fi

\@ifundefined{lambdabar}{
      
   }{}

%
% Macros for style editor docs
\@ifundefined{StyleEditBeginDoc}{}{}
%
% Macros for footnotes
\def\QQfnmark#1{\footnotemark}

%
% Macros for indexing.
%
\@ifundefined{TCIMAKEINDEX}{}{\makeindex}%
%
% Attempts to avoid problems with other styles
\@ifundefined{abstract}{%
 \def\abstract{%
  \if@twocolumn
   \section*{Abstract (Not appropriate in this style!)}%
   \else \small 
   \begin{center}{\bf Abstract\vspace{-.5em}\vspace{\z@}}\end{center}%
   \quotation 
   \fi
  }%
 }{%
 }%
\@ifundefined{endabstract}{\def\endabstract
  {\if@twocolumn\else\endquotation\fi}}{}%
\@ifundefined{maketitle}{\def\maketitle#1{}}{}%
\@ifundefined{affiliation}{\def\affiliation#1{}}{}%
\@ifundefined{proof}{}{}%
\@ifundefined{endproof}{}{}%
\@ifundefined{newfield}{\def\newfield#1#2{}}{}%
\@ifundefined{chapter}{\def\chapter#1{\par(Chapter head:)#1\par }%
 \newcount\c@chapter}{}%
\@ifundefined{part}{\def\part#1{\par(Part head:)#1\par }}{}%
\@ifundefined{section}{\def\section#1{\par(Section head:)#1\par }}{}%
\@ifundefined{subsection}{\def\subsection#1%
 {\par(Subsection head:)#1\par }}{}%
\@ifundefined{subsubsection}{\def\subsubsection#1%
 {\par(Subsubsection head:)#1\par }}{}%
\@ifundefined{paragraph}{\def\paragraph#1%
 {\par(Subsubsubsection head:)#1\par }}{}%
\@ifundefined{subparagraph}{\def\subparagraph#1%
 {\par(Subsubsubsubsection head:)#1\par }}{}%
%%%%%%%%%%%%%%%%%%%%%%%%%%%%%%%%%%%%%%%%%%%%%%%%%%%%%%%%%%%%%%%%%%%%%%%%
% These symbols are not recognized by LaTeX
\@ifundefined{therefore}{}{}%
\@ifundefined{backepsilon}{}{}%
\@ifundefined{yen}{}{}%
\@ifundefined{registered}{%
   \def\registered{\relax\ifmmode{}\r@gistered
                    \else$\m@th\r@gistered$\fi}%
 \def\r@gistered{^{\ooalign
  {\hfil\raise.07ex\hbox{$\scriptstyle\rm\text{R}$}\hfil\crcr
  \mathhexbox20D}}}}{}%
\@ifundefined{Eth}{}{}%
\@ifundefined{eth}{}{}%
\@ifundefined{Thorn}{}{}%
\@ifundefined{thorn}{}{}%
% A macro to allow any symbol that requires math to appear in text
%
\@ifundefined{degree}{}{}%
%
% macros for T3TeX files
\newdimen\theight
\@ifundefined{Column}{\def\Column{%
 \vadjust{\setbox\z@=\hbox{\scriptsize\quad\quad tcol}%
  \theight=\ht\z@\advance\theight by \dp\z@\advance\theight by \lineskip
  \kern -\theight \vbox to \theight{%
   \rightline{\rlap{\box\z@}}%
   \vss
   }%
  }%
 }}{}%
\@ifundefined{qed}{\def\qed{%
 \ifhmode\unskip\nobreak\fi\ifmmode\ifinner\else\hskip5\p@\fi\fi
 \hbox{\hskip5\p@\vrule width4\p@ height6\p@ depth1.5\p@\hskip\p@}%
 }}{}%
\@ifundefined{cents}{}{}%
\@ifundefined{tciLaplace}{}{}%
\@ifundefined{tciFourier}{}{}%
\@ifundefined{textcurrency}{}{}%
\@ifundefined{texteuro}{}{}%
\@ifundefined{textfranc}{}{}%
\@ifundefined{textlira}{}{}%
\@ifundefined{textpeseta}{}{}%
\@ifundefined{miss}{\def\miss{\hbox{\vrule height2\p@ width 2\p@ depth\z@}}}{}%
\@ifundefined{vvert}{}{}%  %always translated to \left| or \right|
\@ifundefined{tcol}{\def\tcol#1{{\baselineskip=6\p@ \vcenter{#1}} \Column}}{}%
\@ifundefined{dB}{}{}%        %dummy entry in column 
\@ifundefined{mB}{}{}%   %column entry
\@ifundefined{nB}{}{}%     %column entry (not math)
\@ifundefined{note}{}{}%
\def\newfmtname{LaTeX2e}
% No longer load latexsym.  This is now handled by SWP, which uses amsfonts if necessary
%
\ifx\fmtname\newfmtname
  \DeclareOldFontCommand{\rm}{\normalfont\rmfamily}{\mathrm}
  \DeclareOldFontCommand{\sf}{\normalfont\sffamily}{\mathsf}
  \DeclareOldFontCommand{\tt}{\normalfont\ttfamily}{\mathtt}
  \DeclareOldFontCommand{\bf}{\normalfont\bfseries}{\mathbf}
  \DeclareOldFontCommand{\it}{\normalfont\itshape}{\mathit}
  \DeclareOldFontCommand{\sl}{\normalfont\slshape}{\@nomath\sl}
  \DeclareOldFontCommand{\sc}{\normalfont\scshape}{\@nomath\sc}
\fi

%
% Greek bold macros
% Redefine all of the math symbols 
% which might be bolded	 - there are 
% probably others to add to this list

\def\alpha{{\Greekmath 010B}}%
\def\beta{{\Greekmath 010C}}%
\def\gamma{{\Greekmath 010D}}%
\def\delta{{\Greekmath 010E}}%
\def\epsilon{{\Greekmath 010F}}%
\def\zeta{{\Greekmath 0110}}%
\def\eta{{\Greekmath 0111}}%
\def\theta{{\Greekmath 0112}}%
\def\iota{{\Greekmath 0113}}%
\def\kappa{{\Greekmath 0114}}%
\def\lambda{{\Greekmath 0115}}%
\def\mu{{\Greekmath 0116}}%
\def\nu{{\Greekmath 0117}}%
\def\xi{{\Greekmath 0118}}%
\def\pi{{\Greekmath 0119}}%
\def\rho{{\Greekmath 011A}}%
\def\sigma{{\Greekmath 011B}}%
\def\tau{{\Greekmath 011C}}%
\def\upsilon{{\Greekmath 011D}}%
\def\phi{{\Greekmath 011E}}%
\def\chi{{\Greekmath 011F}}%
\def\psi{{\Greekmath 0120}}%
\def\omega{{\Greekmath 0121}}%
\def\varepsilon{{\Greekmath 0122}}%
\def\vartheta{{\Greekmath 0123}}%
\def\varpi{{\Greekmath 0124}}%
\def\varrho{{\Greekmath 0125}}%
\def\varsigma{{\Greekmath 0126}}%
\def\varphi{{\Greekmath 0127}}%

\def\nabla{{\Greekmath 0272}}
\def\FindBoldGroup{%
   {\setbox0=\hbox{$\mathbf{x\global\edef\theboldgroup{\the\mathgroup}}$}}%
}

\def\Greekmath#1#2#3#4{%
    \if@compatibility
        \ifnum\mathgroup=\symbold
           \mathchoice{\mbox{\boldmath$\displaystyle\mathchar"#1#2#3#4$}}%
                      {\mbox{\boldmath$\textstyle\mathchar"#1#2#3#4$}}%
                      {\mbox{\boldmath$\scriptstyle\mathchar"#1#2#3#4$}}%
                      {\mbox{\boldmath$\scriptscriptstyle\mathchar"#1#2#3#4$}}%
        \else
           \mathchar"#1#2#3#4% 
        \fi 
    \else 
        \FindBoldGroup
        \ifnum\mathgroup=\theboldgroup % For 2e
           \mathchoice{\mbox{\boldmath$\displaystyle\mathchar"#1#2#3#4$}}%
                      {\mbox{\boldmath$\textstyle\mathchar"#1#2#3#4$}}%
                      {\mbox{\boldmath$\scriptstyle\mathchar"#1#2#3#4$}}%
                      {\mbox{\boldmath$\scriptscriptstyle\mathchar"#1#2#3#4$}}%
        \else
           \mathchar"#1#2#3#4% 
        \fi     	    
	  \fi}

\newif\ifGreekBold  \GreekBoldfalse
\let\SAVEPBF=\pbf
\def\pbf{\GreekBoldtrue\SAVEPBF}%

\@ifundefined{theorem}{\newtheorem{theorem}{Theorem}}{}
\@ifundefined{lemma}{\newtheorem{lemma}[theorem]{Lemma}}{}
\@ifundefined{corollary}{\newtheorem{corollary}[theorem]{Corollary}}{}
\@ifundefined{conjecture}{}{}
\@ifundefined{proposition}{\newtheorem{proposition}[theorem]{Proposition}}{}
\@ifundefined{axiom}{}{}
\@ifundefined{remark}{\newtheorem{remark}{Remark}}{}
\@ifundefined{example}{}{}
\@ifundefined{exercise}{}{}
\@ifundefined{definition}{\newtheorem{definition}{Definition}}{}

\@ifundefined{mathletters}{%
  \newcounter{equationnumber}  
  \def\mathletters{%
     \addtocounter{equation}{1}
     \edef\@currentlabel{\theequation}%
     \setcounter{equationnumber}{\c@equation}
     \setcounter{equation}{0}%
     \edef\theequation{\@currentlabel\noexpand\alph{equation}}%
  }
  
}{}

%Logos
\@ifundefined{BibTeX}{%
    \def\BibTeX{{\rm B\kern-.05em{\sc i\kern-.025em b}\kern-.08em
                 T\kern-.1667em\lower.7ex\hbox{E}\kern-.125emX}}}{}%
\@ifundefined{AmS}%
    {\def\AmS{{\protect\usefont{OMS}{cmsy}{m}{n}%
                A\kern-.1667em\lower.5ex\hbox{M}\kern-.125emS}}}{}%
\@ifundefined{AmSTeX}{}{}%
%

% This macro is a fix to eqnarray
\def\@@eqncr{\let\@tempa\relax
    \ifcase\@eqcnt \def\@tempa{& & &}\or \def\@tempa{& &}%
      \else \def\@tempa{&}\fi
     \@tempa
     \if@eqnsw
        \iftag@
           \@taggnum
        \else
           \@eqnnum\stepcounter{equation}%
        \fi
     \fi
     \global\tag@false
     \global\@eqnswtrue
     \global\@eqcnt\z@\cr}

\def\TCItag{\@ifnextchar*{\@TCItagstar}{\@TCItag}}
\def\@TCItag#1{%
    \global\tag@true
    \global\def\@taggnum{(#1)}}
\def\@TCItagstar*#1{%
    \global\tag@true
    \global\def\@taggnum{#1}}
%
%%%%%%%%%%%%%%%%%%%%%%%%%%%%%%%%%%%%%%%%%%%%%%%%%%%%%%%%%%%%%%%%%%%%%
%
%
%
%
%
%
%
%
%
%
%
%
%
%
%
%
%
% Macros for text size operators:
%
%
%
%
%
%
%
%
%
%
%
%
%
%
%
%
%
%
%
%
%
%Macros for display size operators:
%
%
%
%
%
%
%
%
%
%
%
%
%
%
%
%
%
%
%

\if@compatibility\else
  \RequirePackage{amsmath}
  \makeatother
   
\fi

%%%%%%%%%%%%%%%%%%%%%%%%%%%%%%%%%%%%%%%%%%%%%%%%%%%%%%%%%%%%%%%%%%%%%%%%%%
% NOTE: The rest of this file is read only if in LaTeX 2.09 compatibility
% mode. This section is used to define AMS-like constructs in the
% event they have not been defined.
%%%%%%%%%%%%%%%%%%%%%%%%%%%%%%%%%%%%%%%%%%%%%%%%%%%%%%%%%%%%%%%%%%%%%%%%%%
\typeout{TCILATEX defining AMS-like constructs in LaTeX 2.09 COMPATIBILITY MODE}
\def\ExitTCILatex{\makeatother }

\bgroup
\ifx\ds@amstex\relax
   \message{amstex already loaded}\aftergroup\ExitTCILatex
\else
   \@ifpackageloaded{amsmath}%
      {\message{amsmath already loaded}\aftergroup\ExitTCILatex}
      {}
   \@ifpackageloaded{amstex}%
      {\message{amstex already loaded}\aftergroup\ExitTCILatex}
      {}
   \@ifpackageloaded{amsgen}%
      {\message{amsgen already loaded}\aftergroup\ExitTCILatex}
      {}
\fi
\egroup

%%%%%%%%%%%%%%%%%%%%%%%%%%%%%%%%%%%%%%%%%%%%%%%%%%%%%%%%%%%%%%%%%%%%%%%%
%  Macros to define some AMS LaTeX constructs when 
%  AMS LaTeX has not been loaded
% 
% These macros are copied from the AMS-TeX package for doing
% multiple integrals.
%
\let\DOTSI\relax
\def\RIfM@{\relax\ifmmode}%
\def\FN@{\futurelet\next}%
\newcount\intno@
\def\iint{\DOTSI\intno@\tw@\FN@\ints@}%
\def\iiint{\DOTSI\intno@\thr@@\FN@\ints@}%
\def\iiiint{\DOTSI\intno@4 \FN@\ints@}%
\def\idotsint{\DOTSI\intno@\z@\FN@\ints@}%
\def\ints@{\findlimits@\ints@@}%
\newif\iflimtoken@
\newif\iflimits@
\def\findlimits@{\limtoken@true\ifx\next\limits\limits@true
 \else\ifx\next\nolimits\limits@false\else
 \limtoken@false\ifx\ilimits@\nolimits\limits@false\else
 \ifinner\limits@false\else\limits@true\fi\fi\fi\fi}%
\def\multint@{\int\ifnum\intno@=\z@\intdots@                          %1
 \else\intkern@\fi                                                    %2
 \ifnum\intno@>\tw@\int\intkern@\fi                                   %3
 \ifnum\intno@>\thr@@\int\intkern@\fi                                 %4
 \int}%                                                               %5
\def\multintlimits@{\intop\ifnum\intno@=\z@\intdots@\else\intkern@\fi
 \ifnum\intno@>\tw@\intop\intkern@\fi
 \ifnum\intno@>\thr@@\intop\intkern@\fi\intop}%
\def\intic@{%
    \mathchoice{\hskip.5em}{\hskip.4em}{\hskip.4em}{\hskip.4em}}%
\def\negintic@{\mathchoice
 {\hskip-.5em}{\hskip-.4em}{\hskip-.4em}{\hskip-.4em}}%
\def\ints@@{\iflimtoken@                                              %1
 \def\ints@@@{\iflimits@\negintic@
   \mathop{\intic@\multintlimits@}\limits                             %2
  \else\multint@\nolimits\fi                                          %3
  \eat@}%                                                             %4
 \else                                                                %5
 \def\ints@@@{\iflimits@\negintic@
  \mathop{\intic@\multintlimits@}\limits\else
  \multint@\nolimits\fi}\fi\ints@@@}%
\def\intkern@{\mathchoice{\!\!\!}{\!\!}{\!\!}{\!\!}}%
\def\plaincdots@{\mathinner{\cdotp\cdotp\cdotp}}%
\def\intdots@{\mathchoice{\plaincdots@}%
 {{\cdotp}\mkern1.5mu{\cdotp}\mkern1.5mu{\cdotp}}%
 {{\cdotp}\mkern1mu{\cdotp}\mkern1mu{\cdotp}}%
 {{\cdotp}\mkern1mu{\cdotp}\mkern1mu{\cdotp}}}%
%
%
%  These macros are for doing the AMS \text{} construct
%
\def\RIfM@{\relax\protect\ifmmode}
\def\text{\RIfM@\expandafter\text@\else\expandafter\mbox\fi}
\let\nfss@text\text
\def\text@#1{\mathchoice
   {\textdef@\displaystyle\f@size{#1}}%
   {\textdef@\textstyle\tf@size{\firstchoice@false #1}}%
   {\textdef@\textstyle\sf@size{\firstchoice@false #1}}%
   {\textdef@\textstyle \ssf@size{\firstchoice@false #1}}%
   \glb@settings}

\def\textdef@#1#2#3{\hbox{{%
                    \everymath{#1}%
                    \let\f@size#2\selectfont
                    #3}}}
\newif\iffirstchoice@
\firstchoice@true
%
%These are the AMS constructs for multiline limits.
%
\def\Let@{\relax\iffalse{\fi\let\\=\cr\iffalse}\fi}%
\def\vspace@{\def\vspace##1{\crcr\noalign{\vskip##1\relax}}}%
\def\multilimits@{\bgroup\vspace@\Let@
 \baselineskip\fontdimen10 \scriptfont\tw@
 \advance\baselineskip\fontdimen12 \scriptfont\tw@
 \lineskip\thr@@\fontdimen8 \scriptfont\thr@@
 \lineskiplimit\lineskip
 \vbox\bgroup\ialign\bgroup\hfil$\m@th\scriptstyle{##}$\hfil\crcr}%
\def\Sb{_\multilimits@}%
\def\endSb{\crcr\egroup\egroup\egroup}%
\def\Sp{^\multilimits@}%

%
%
%These are AMS constructs for horizontal arrows
%
\newdimen\ex@
\ex@.2326ex
\def\rightarrowfill@#1{$#1\m@th\mathord-\mkern-6mu\cleaders
 \hbox{$#1\mkern-2mu\mathord-\mkern-2mu$}\hfill
 \mkern-6mu\mathord\rightarrow$}%
\def\leftarrowfill@#1{$#1\m@th\mathord\leftarrow\mkern-6mu\cleaders
 \hbox{$#1\mkern-2mu\mathord-\mkern-2mu$}\hfill\mkern-6mu\mathord-$}%
\def\leftrightarrowfill@#1{$#1\m@th\mathord\leftarrow
\mkern-6mu\cleaders
 \hbox{$#1\mkern-2mu\mathord-\mkern-2mu$}\hfill
 \mkern-6mu\mathord\rightarrow$}%
\def\overrightarrow{\mathpalette\overrightarrow@}%
\def\overrightarrow@#1#2{\vbox{\ialign{##\crcr\rightarrowfill@#1\crcr
 \noalign{\kern-\ex@\nointerlineskip}$\m@th\hfil#1#2\hfil$\crcr}}}%

\def\overleftarrow{\mathpalette\overleftarrow@}%
\def\overleftarrow@#1#2{\vbox{\ialign{##\crcr\leftarrowfill@#1\crcr
 \noalign{\kern-\ex@\nointerlineskip}$\m@th\hfil#1#2\hfil$\crcr}}}%
\def\overleftrightarrow{\mathpalette\overleftrightarrow@}%
\def\overleftrightarrow@#1#2{\vbox{\ialign{##\crcr
   \leftrightarrowfill@#1\crcr
 \noalign{\kern-\ex@\nointerlineskip}$\m@th\hfil#1#2\hfil$\crcr}}}%
\def\underrightarrow{\mathpalette\underrightarrow@}%
\def\underrightarrow@#1#2{\vtop{\ialign{##\crcr$\m@th\hfil#1#2\hfil
  $\crcr\noalign{\nointerlineskip}\rightarrowfill@#1\crcr}}}%

\def\underleftarrow{\mathpalette\underleftarrow@}%
\def\underleftarrow@#1#2{\vtop{\ialign{##\crcr$\m@th\hfil#1#2\hfil
  $\crcr\noalign{\nointerlineskip}\leftarrowfill@#1\crcr}}}%
\def\underleftrightarrow{\mathpalette\underleftrightarrow@}%
\def\underleftrightarrow@#1#2{\vtop{\ialign{##\crcr$\m@th
  \hfil#1#2\hfil$\crcr
 \noalign{\nointerlineskip}\leftrightarrowfill@#1\crcr}}}%
%%%%%%%%%%%%%%%%%%%%%

\def\qopnamewl@#1{\mathop{\operator@font#1}\nlimits@}
\let\nlimits@\displaylimits
\def\setboxz@h{\setbox\z@\hbox}

\def\varlim@#1#2{\mathop{\vtop{\ialign{##\crcr
 \hfil$#1\m@th\operator@font lim$\hfil\crcr
 \noalign{\nointerlineskip}#2#1\crcr
 \noalign{\nointerlineskip\kern-\ex@}\crcr}}}}

 \def\rightarrowfill@#1{\m@th\setboxz@h{$#1-$}\ht\z@\z@
  $#1\copy\z@\mkern-6mu\cleaders
  \hbox{$#1\mkern-2mu\box\z@\mkern-2mu$}\hfill
  \mkern-6mu\mathord\rightarrow$}
\def\leftarrowfill@#1{\m@th\setboxz@h{$#1-$}\ht\z@\z@
  $#1\mathord\leftarrow\mkern-6mu\cleaders
  \hbox{$#1\mkern-2mu\copy\z@\mkern-2mu$}\hfill
  \mkern-6mu\box\z@$}

\def\projlim{\qopnamewl@{proj\,lim}}
\def\injlim{\qopnamewl@{inj\,lim}}
\def\varinjlim{\mathpalette\varlim@\rightarrowfill@}
\def\varprojlim{\mathpalette\varlim@\leftarrowfill@}
\def\varliminf{\mathpalette\varliminf@{}}
\def\varliminf@#1{\mathop{\underline{\vrule\@depth.2\ex@\@width\z@
   \hbox{$#1\m@th\operator@font lim$}}}}
\def\varlimsup{\mathpalette\varlimsup@{}}
\def\varlimsup@#1{\mathop{\overline
  {\hbox{$#1\m@th\operator@font lim$}}}}

%
%Companion to stackrel
%
%
%
% These are AMS environments that will be defined to
% be verbatims if amstex has not actually been 
% loaded
%
%
\begingroup \catcode `|=0 \catcode `[= 1
\catcode`]=2 \catcode `\{=12 \catcode `\}=12
\catcode`\\=12 
|gdef|@alignverbatim#1\end{align}[#1|end[align]]
|gdef|@salignverbatim#1\end{align*}[#1|end[align*]]

|gdef|@alignatverbatim#1\end{alignat}[#1|end[alignat]]
|gdef|@salignatverbatim#1\end{alignat*}[#1|end[alignat*]]

|gdef|@xalignatverbatim#1\end{xalignat}[#1|end[xalignat]]
|gdef|@sxalignatverbatim#1\end{xalignat*}[#1|end[xalignat*]]

|gdef|@gatherverbatim#1\end{gather}[#1|end[gather]]
|gdef|@sgatherverbatim#1\end{gather*}[#1|end[gather*]]

|gdef|@gatherverbatim#1\end{gather}[#1|end[gather]]
|gdef|@sgatherverbatim#1\end{gather*}[#1|end[gather*]]

|gdef|@multilineverbatim#1\end{multiline}[#1|end[multiline]]
|gdef|@smultilineverbatim#1\end{multiline*}[#1|end[multiline*]]

|gdef|@arraxverbatim#1\end{arrax}[#1|end[arrax]]
|gdef|@sarraxverbatim#1\end{arrax*}[#1|end[arrax*]]

|gdef|@tabulaxverbatim#1\end{tabulax}[#1|end[tabulax]]
|gdef|@stabulaxverbatim#1\end{tabulax*}[#1|end[tabulax*]]

|endgroup

\def\align{\@verbatim \frenchspacing\@vobeyspaces \@alignverbatim
You are using the "align" environment in a style in which it is not defined.}

\@namedef{align*}{\@verbatim\@salignverbatim
You are using the "align*" environment in a style in which it is not defined.}
\expandafter\let\csname endalign*\endcsname =\endtrivlist

\def\alignat{\@verbatim \frenchspacing\@vobeyspaces \@alignatverbatim
You are using the "alignat" environment in a style in which it is not defined.}

\@namedef{alignat*}{\@verbatim\@salignatverbatim
You are using the "alignat*" environment in a style in which it is not defined.}
\expandafter\let\csname endalignat*\endcsname =\endtrivlist

\def\xalignat{\@verbatim \frenchspacing\@vobeyspaces \@xalignatverbatim
You are using the "xalignat" environment in a style in which it is not defined.}

\@namedef{xalignat*}{\@verbatim\@sxalignatverbatim
You are using the "xalignat*" environment in a style in which it is not defined.}
\expandafter\let\csname endxalignat*\endcsname =\endtrivlist

\def\gather{\@verbatim \frenchspacing\@vobeyspaces \@gatherverbatim
You are using the "gather" environment in a style in which it is not defined.}

\@namedef{gather*}{\@verbatim\@sgatherverbatim
You are using the "gather*" environment in a style in which it is not defined.}
\expandafter\let\csname endgather*\endcsname =\endtrivlist

\def\multiline{\@verbatim \frenchspacing\@vobeyspaces \@multilineverbatim
You are using the "multiline" environment in a style in which it is not defined.}

\@namedef{multiline*}{\@verbatim\@smultilineverbatim
You are using the "multiline*" environment in a style in which it is not defined.}
\expandafter\let\csname endmultiline*\endcsname =\endtrivlist

\def\arrax{\@verbatim \frenchspacing\@vobeyspaces \@arraxverbatim
You are using a type of "array" construct that is only allowed in AmS-LaTeX.}

\def\tabulax{\@verbatim \frenchspacing\@vobeyspaces \@tabulaxverbatim
You are using a type of "tabular" construct that is only allowed in AmS-LaTeX.}

\@namedef{arrax*}{\@verbatim\@sarraxverbatim
You are using a type of "array*" construct that is only allowed in AmS-LaTeX.}
\expandafter\let\csname endarrax*\endcsname =\endtrivlist

\@namedef{tabulax*}{\@verbatim\@stabulaxverbatim
You are using a type of "tabular*" construct that is only allowed in AmS-LaTeX.}
\expandafter\let\csname endtabulax*\endcsname =\endtrivlist

% macro to simulate ams tag construct

% This macro is a fix to the equation environment
 \def\endequation{%
     \ifmmode\ifinner % FLEQN hack
      \iftag@
        \addtocounter{equation}{-1} % undo the increment made in the begin part
        $\hfil
           \displaywidth\linewidth\@taggnum\egroup \endtrivlist
        \global\tag@false
        \global\@ignoretrue   
      \else
        $\hfil
           \displaywidth\linewidth\@eqnnum\egroup \endtrivlist
        \global\tag@false
        \global\@ignoretrue 
      \fi
     \else   
      \iftag@
        \addtocounter{equation}{-1} % undo the increment made in the begin part
        \eqno \hbox{\@taggnum}
        \global\tag@false%
        $$\global\@ignoretrue
      \else
        \eqno \hbox{\@eqnnum}% $$ BRACE MATCHING HACK
        $$\global\@ignoretrue
      \fi
     \fi\fi
 } 

 \newif\iftag@ \tag@false
 
 \def\TCItag{\@ifnextchar*{\@TCItagstar}{\@TCItag}}
 \def\@TCItag#1{%
     \global\tag@true
     \global\def\@taggnum{(#1)}}
 \def\@TCItagstar*#1{%
     \global\tag@true
     \global\def\@taggnum{#1}}

  \@ifundefined{tag}{
     \def\tag{\@ifnextchar*{\@tagstar}{\@tag}}
     \def\@tag#1{%
         \global\tag@true
         \global\def\@taggnum{(#1)}}
     \def\@tagstar*#1{%
         \global\tag@true
         \global\def\@taggnum{#1}}
  }{}

%
%
%
%
%

% Do not add anything to the end of this file.  
% The last section of the file is loaded only if 
% amstex has not been.
\makeatother

\begin{document}

\title{Existence of Noise Induced Order, a Computer Aided Proof}
\author{Stefano Galatolo\thanks{
Dipartimento di Matematica, Universita di Pisa, Via Buonarroti 1, Pisa -
Italy. Email: stefano.galatolo@unipi.it}, Maurizio Monge\thanks{
Instituto de Matem\'{a}tica, Univ. Fed. do Rio de Janeiro, Av. Athos da Silveira Ramos 149, Bloco C
Cidade Universit\'{a}ria, Rio de Janeiro - Ilha do Fund\~{a}o - Brazil.
Email: maurizio.monge@im.ufrj.br}, Isaia Nisoli\thanks{
Instituto de Matem\'{a}tica, Univ. Fed. do Rio de Janeiro, Av. Athos da Silveira Ramos 149, Bloco C
Cidade Universit\'{a}ria, Rio de Janeiro - Ilha do Fund\~{a}o - Brazil.
Email: nisoli@im.ufrj.br}}
\maketitle

\begin{abstract}

We prove the existence of Noise Induced Order in the Matsumoto-Tsuda model, where it was originally discovered in 1983 by numerical simulations. This is a model of the famous Belosouv-Zabotinsky reaction, a chaotic chemical reaction, and consists of a one dimensional random dynamical system with additive noise. The simulations showed that an increase in amplitude of the noise causes the Lyapunov exponent to decrease from positive to negative; we give a mathematical proof of the existence of this transition.
The method we use relies on some computer aided estimates providing a certified approximation of the stationary measure in the $L^{1}$ norm. This is realized by explicit functional analytic estimates working together with an efficient algorithm. The method is
  general enough to be adapted to any piecewise differentiable dynamical system on the unit interval with additive noise. We also prove that the stationary measure of the system varies in a Lipschitz way if the system is perturbed and that the Lyapunov exponent of the system varies in a H\"older way when the noise amplitude increases.
% We also prove that the stationary measure of the system
 % and its Lyapunov exponent have a Lipschitz stability under several kinds of perturbation
 % of the noise and of the system itself. The Lipschitz constants of this stability result
 % are also estimated explicitly.
\end{abstract}

\section{Introduction}

%\begin{document}

The \textquotedblleft Noise induced order\textquotedblright\ phenomenon was
discovered in numerical simulations and experiments regarding systems
modeled by a deterministic dynamics perturbed by noise. The somewhat
surprising phenomenon emerging is that the system appears to be chaotic for
very small noise intensity, but when the intensity increases the system
begins to have a less and less chaotic behavior, passing from a positive
Lyapunov exponent to a negative one.{
A similar behavior was also found for other indicators of chaos. This  paper however will focus only on the Lyapunov exponent.}

The phenomenon was first discovered by numerical simulations in \cite{MT}
in a system related to the famous Belousov-Zhabotinsky reaction (see Figure %
\ref{BZfig}) modeled by a one dimensional map perturbed by additive noise.
Real experiments confirmed the existence of the phenomenon appearing in the
model of the reaction (see \cite{Yal}, and also \cite{W}, \cite{YYK}, \cite%
{YYSKMN}, \cite{HE}, \cite{D}, \cite{M}, for some examples of related works).

\begin{figure}[htbp]
\centering
\includegraphics[height=2.9cm]{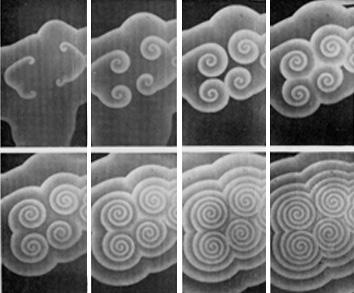}
\caption{A sequence of pictures showing various stages of the evolution of
the Belousov-Zhabotinsky reaction (from \protect\cite{ZZ}).}
\label{BZfig}
\end{figure}

Despite the impact that the discovery of such noise induced phenomena had in
the nonlinear science and physical literature (more than 390 citations to 
\cite{MT} on Google Scholar at the moment of writing this paper) to the best
of our knowledge there is no mathematical literature about Noise Induced
Order or rigorous proofs of its actual existence in nontrivial systems.

The mathematical study of this phenomenon is difficult because in the
deterministic part of the dynamics (see Figure \ref{f0}) there is a
coexistence of strongly expanding and strongly contracting regions and the
prevalence of expanding or contracting behavior for typical orbits is a
consequence of the global structure of the dynamics. \ We remark that this
phenomenon is one dimensional and inherently nonlinear, thus mathematically
not much related to the noise induced stabilization studied in \cite{ACV}
and following papers. With the help of some computer aided estimates we
prove that the global structure of this random system, allows expansion to prevail when the noise amplitude is very small, but the appearance of quite a large noise
allows the contraction to prevail. %(see
%figure \ref{fig:densities} for two examples of the approximated stationary
%densities of the system, related to a pair of large and small noise
%amplitude).

Our approach is based on the fact that the presence of the noise simplifies
the functional analytic properties of the transfer operator associated to
the system, smoothing out fine resolution details, and making it well
approximable by a finite resolution and finite dimensional one. This makes a
computer aided proof possible, letting the computer manage the complexity of
the deterministic part of the system at a \emph{finite} resolution scale and
understanding the global structure of the dynamics. However, the
computational power required to perform these computations in a naive way is
out of the range of current computers. This is true mostly for proving that
the Lyapunov exponent is positive in some case of very small noise range.
Indeed small noise corresponds to high resolution needed in the study of the
system. Because of this we had to find some mathematical clever way to
perform the needed estimates, using different functional spaces. This is the
main part of the mathematical work contained in what follows and can be
applied to many other dynamical systems perturbed by noise. The algorithm we
develop in this work was indeed already used in \cite{arnold} and \cite{itineracy} for the study of other dynamical systems with additive noise
considered as models of certain phenomena in climate science and
neuroscience.

It is known that naive computer simulations of chaotic systems may not be
reliable in some case (see for example \cite{Sim},\cite{Tuk},\cite{GNR},\cite%
{Gui},\cite{GGa} for examples of misleading naive simulations and a general
discussion on the problem). Beside the pure mathematical interest of a
rigorously proved result and a rigorously certified estimate, the study of
inherently reliable methods for the numerical study of chaotic dynamical
systems is strongly motivated.

{\noindent \textbf{Overview of the results.}} In this work we consider the
model of the Belousov-Zhabotinsky reaction studied in \cite{MT} (see also 
\cite{Z} for explanations on how the model can be deduced from the chemical
mechanism of the BZ reaction). This is a random dynamical system: a
deterministic map with additive noise at each iteration. The deterministic
part of the dynamics in the model is driven by a map $T_{a,b,c}:[0,1]%
\rightarrow \lbrack 0,1]$ defined by 
\begin{equation}
T_{a,b,c}(x)=\left\{ 
\begin{array}{c}
(a+(x-\frac{1}{8})^{\frac{1}{3}})e^{-x}+b,~~~0\leq x\leq 0.3 \\ 
c(10xe^{\frac{-10x}{3}})^{19}+b~~~0.3<x\leq 1%
\end{array}%
\right.  \label{Tabc}
\end{equation}
where 
\begin{equation}
a\in %
\scalebox{0.77}{$0.506073569036822351319599371053047956980141736828203749380990114218225638827_6^9$}%
,  \label{eqa}
\end{equation}%
\begin{equation}
b\in %
\scalebox{0.77}{$0.02328852830307032054478158044023918735669943648088852646123182739831022528_{158}^{213}$}%
,  \label{eqb}
\end{equation}%
\begin{equation}
c\in %
\scalebox{0.77}{$0.121205692738975111744666848150620569782497212127938371936404761693002104361_5^8$}%
.  \label{eqc}
\end{equation}%
The graph of an example of $T_{a,b,c}$ is shown in Figure \ref{f0}.
Following the \emph{Inverval Arithmetics} formalism we represent intervals
in the real line by subscript and superscript describing the decimal
expansion of lower and upper bounds for $x$ so that $x\in 0.klm_{tuv}^{xyz}$
means that $x$ belongs to the interval $[0.klmtuv,0.klmxyz]$. The choice of $%
a,b,c$ follow the one made in \cite{MT}, adding some more precision (see
Remark \ref{abcrm} for more details).

\begin{remark}
The interval arithmetics and its certified numerical methods (see \cite%
{Tucker} for an introduction) allow to obtain rigorous results as output of
the computer aided estimates. Our computer aided estimates are implemented
in this framework. We used SAGE \cite{SAGE} and the validated numerics
packages shipped with it. (The interval package is a binding to MPFI \cite%
{MPFI}.) Part of the numerical linear algebra was done using OpenCL \cite%
{OpenCL} running on Nvidia graphic cards.
\end{remark}

At each iteration of the map a uniformly distributed noise perturbation with
span of size $\xi $ is applied. Further details on the system are presented
in Section~\ref{map}.

\begin{figure}[htbp]
\centering
\includegraphics[height=4cm, width=5cm]{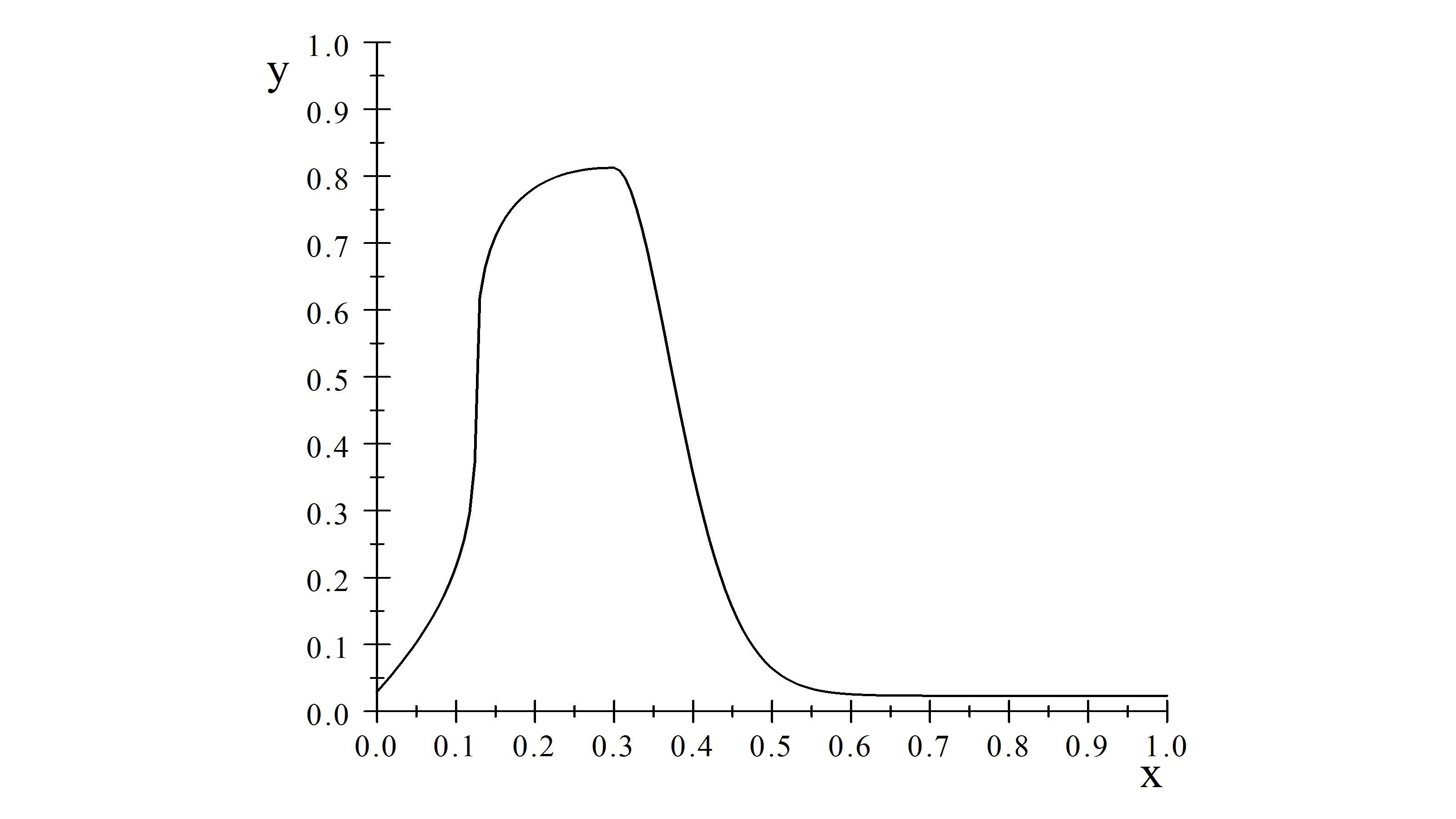}
\caption{The map $T_{a,b,c}$.}
\label{f0}
\end{figure}

In the paper we prove that when the noise size $\xi $ is contained in the
interval $[\xi _{1},1/2]$ where $\xi _{1}=\frac{8.73}{10^{5}}$ this random
system has a unique ergodic absolutely continuous stationary measure $\mu
_{\xi }$ (see Proposition \ref{ergo}) and consider the associated \emph{%
Lyapunov exponent} 
\begin{equation*}
\lambda _{\xi }:=\int_{0}^{1}\log |T^{\prime }(x)|d\mu _{\xi }.
\end{equation*}

We prove that the behaviour of $\lambda _{\xi }$ in the system is similar to
the one found by the numerical investigations of Matsumoto and Tsuda (\cite%
{MT}). \emph{In particular there is a transition from positive to negative
exponent as the noise amplitude increases.} We provide explicit examples of
values for the noise amplitude having positive and negative Lyapunov
exponent. In particular, the findings of the present work applied to the
Belousov-Zhabotinsky model defined above allow to state the following

\begin{theorem}
\label{theo:final} Let $\lambda _{\xi }$ be the Lyapunov expontent of the
system defined above\footnote{%
For every choice of the coefficients $a,b,c$ of $($\ref{Tabc}$)$ as in $($\ref{eqa}$)$,$($\ref{eqb}$)$,$($\ref{eqc}$)$.} with noise of size $\xi $. \ For each $%
\alpha <1,\ \lambda _{\xi }$ is $\alpha -$H\H{o}lder continuous as a
function of $\xi $ when $\xi \in \lbrack \frac{8.73}{10^{5}},1/2]$;
furthermore for $\xi _{1}=\frac{8.73}{10^{5}}$ and $\xi _{2}=\frac{8.60}{%
10^{3}}$ it holds

\begin{itemize}
\item[I1] the Lyapunov exponent $\lambda _{\xi _{1}}\in \lbrack \frac{8.365}{%
10^{2}},\frac{8.917}{10^{2}}]$, hence it is rigorously certified to be
positive;

\item[I2] the Lyapunov exponent $\lambda _{\xi _{2}}\in \lbrack \frac{%
-6.03602}{10},\frac{-6.03536}{10}]$, hence it is rigorously certified to be
negative.
\end{itemize}

Therefore, the system exibits Noise Induced Order.
\end{theorem}

We refer to Section \ref{results} for the certified results proving the
change of sign for the Lyapunov exponent, while we refer to Section \ref%
{sec7} (see Corollary \ref{holdlyapcor}) for the H\H{o}lder regularity of
the Lyapunov exponent. In the paper, we estimate similarly the Lyapunov
exponent for many other values of the noise size. The results are reported
in Table \ref{t1} and described in Figure~\ref{flyap} by a graph.

The method of proof and certification of these results relies on the
approximation of the stationary measure $\mu _{\xi }$ of the system up to a
certified error in the $L^{1}$ norm\footnote{%
The computation of stationary measures for dynamical systems perturbed by
noise was approached from different points of view in \cite{GI} where the
convergence (without effective bounds on the approximation error) of an
approximation scheme based on Fourier analysis was proved for certain
classes of maps, and in \cite{R}, where the computability of the stationary
measure up to a given error was considered in an abstract framework, giving
bounds on the computational complexity of the problem. The rigorous
computation of the stationary measure for expanding and contracting Iterated
Function Systems is considered in \cite{GMN}.}. Using this approximation we
can compute the Lyapunov exponent of the system up to a small certified
error. The methods used however are general enough to be applied to any
system formed by piecewise differentiable maps of the interval perturbed by
additive noise having a Bounded Variation distribution (see Sections \ref%
{map} and \ref{post} for a more detailed discussion).

{\noindent \textbf{Plan of the paper.}} In Section \ref{map} we describe the
systems under study and we introduce the technique we use for the
approximation of the transfer operator. The techniques leading to the proof
of Theorems \ref{theo:final} are explained in a sequence of settings of
decreasing generality where the needed assumptions on the system are listed
(see Settings \ref{set1}, \ref{set2}). In Section~\ref{firstalgo} we
describe how to find an explicit bound on the approximation error for the
computation of the stationary measure. In Subsection \ref{post} we describe
an efficient procedure which exploits information coming from a coarse
knowledge of the stationary measure in a bootstrap argument. This procedure
uses several technical lemmas estimating the variation of certain densities
and the norms of certain operators; such lemmas are listed in Section \ref%
{sec3} and proved in Section \ref{appendix} to make reading easier. In
Section \ref{coarsefine} we show an efficient procedure (which is used in
the main algorithm) for the estimation of the rate of contraction of a
finite rank transfer operator, when applied to zero average measures. This
is a quantitative measure of the rate of convergence to equilibrium of the
system which is involved in the quantitative estimate of its stability under
perturbation. This is important in the estimation of the approximation
error. In Section \ref{Lyap} we show the estimates needed to compute the
Lyapunov exponent of the system once we know the stationary measure. In
Section \ref{compresults} we apply all these techniques to the system
described in Section \ref{map}, showing the results of our computer aided
estimates, and proving the existence of noise induced order (and in
particular, Theorem \ref{theo:final}). In Section \ref{sec7} we consider the
stability of the stationary measure to changes in the system's parameters
and of the Lyapunov exponent on changes of the noise amplitude proving the H%
\H{o}lder continuity. Section \ref{sec:rds} contains some definitions and
generalities about Random Dynamical Systems we include for completeness and
to justify the correctness of the notion of Lyapunov exponent which is
estimated in the paper.

\paragraph{Aknowledgements.}

The authors thank Yuzuru Sato for the nice seminar where we learned about
the existence of Noise Induced Order and further explanations, EU
Marie-Curie IRSES ``Brazilian-European partnership in Dynamical Systems''
(FP7-PEOPLE-2012-IRSES 318999 BREUDS) for support during the research and
The Abdus Salam International Centre for Theoretical Physics (ICTP) where
the work started. I.N. was partially supported by CNPq, University of
Uppsala and KAW grant 2013.0315. I. N. thanks UFRJ, CAPES (through the programs PROEX and the CAPES-STINT project "Contemporary topics in non uniformly hyperbolic
dynamics"). S.G. thanks Leverhulm Trust grant (IN-2014-021, “Statistical properties of non uniformly hyperbolic dynamical systems: computer assisted proofs and rigorous computation”) and  GNAMPA/INDAM (“Stabilita e instabilita in sistemi con rumore, un approccio computer assistito.”)

%%% Local Variables: 
%%% mode: latex
%%% TeX-master: "noise_induced_order"
%%% End: 

%\end{document}

\section{The system and its transfer operator\label{map}}
%%% Cerco di riscrivere un po' le parti discorsive, per lasciare il testo piú leggibile

%\begin{document}

In this section we describe more precisely the system which will be studied
in the paper and the associated transfer operators. Basic
notions on random dynamical systems, stationary measures and Lyapunov
exponents we use in the paper are presented in Section \ref{sec:rds}.

A random dynamical system with additive noise on $[0,1]$ and reflecting
boundary conditions is a random perturbation of a deterministic map, defined
by 
\begin{equation}
x\rightarrow T(x)\hat{+}\omega _{n}  \label{systm}
\end{equation}%
where $T:[0,1]\rightarrow \lbrack 0,1]$ is a Borel measurable map and $%
\omega _{n}$ is an i.i.d. process distributed according to a probability
density $\rho _{\xi }$ and $\hat{+}$ is the "reflecting boundaries sum" on $%
[0,1]$ defined as follows.

\begin{definition}
Let $\pi :{\mathbb{R}}\rightarrow [0,1]$ be the piecewise linear map 
\begin{equation}
\pi (x)=\min_{i\in \mathbb{Z}}|x-2i|.  \label{ppi}
\end{equation}%
Let $a,b\in {\mathbb{R}}$ then 
\begin{equation*}
a\hat{+}b:=\pi (a+b)
\end{equation*}
where $+$ is the usual sum operator on $\mathbb{R}$. By this $a\hat{+}b\in
[0,1].$
\end{definition}

In the following we will consider the case where the noise density $\rho_\xi 
$ is the rescaling of some Bounded Variation kernel $\rho \in
BV[-\frac12,\frac12]$ with $\int \rho =1$ in the interval $[-\frac{\xi}2,%
\frac{\xi}2]$, hence
\begin{equation*}
\rho _{\xi }(x)=\frac{1}{\xi }\rho (\frac{1}{\xi }x).
\end{equation*}
(see Definition \ref{dbv} for a recall on the definition of bounded
variation.)

As described in the introduction, the model studied in the present paper is
the one studied in \cite{MT}. We consider a random dynamical system with
additive noise, as in \eqref{systm} where $T=T_{a,b,c}$ is defined in %
\eqref{Tabc}. At each iteration of the map a uniformly distributed noise
perturbation with span of size $\xi $ with reflecting boundaries is applied.

\begin{remark}
\label{abcrm} The parameters $a,b,c$ defined below \eqref{Tabc} have been
computed using interval arithmetic, in the implementation of our algorithm
they are represented by intervals. We explain the motivation for the choice
of the parameters in \cite{MT}. The parameter $c$ is defined in a way to be
nearby to a value for which $T(0.3^{-})=T(0.3^{+})$, making $T$ continuous
at $0.3$. The exact value of $c$ giving the continuity can be computed in a
closed form as: 
\begin{equation*}
c=\frac{20}{3^{20}\cdot 7}\cdot \bigg(\frac{7}{5}\bigg)^{1/3}\cdot
e^{187/10}.
\end{equation*}

The parameter $a$ is defined similarly in a way so that $T^{\prime}(0.3^-)=0$%
, making $T^{\prime }$ continuous at $0.3$. The value of such $a$ can be
computed in a closed form as: 
\begin{equation*}
a=\frac{19}{42}\cdot\bigg(\frac{7}{5}\bigg)^{1/3}.
\end{equation*}

The choice of the parameter $b$ in \cite{MT} is motivated by a parallel with
the logistic map. Let us denote by $T_b$ the map as only the parameter $b$
varies; each $T_b$ has a repelling fixed point $p_b$. In \cite{MT} this
explicit value of $b$ is chosen as an approximation of the parameter value
for which $T^4(0.3)=p_b$ following the kneading sequence ``RLLL'', i.e., a
Misiurewicz condition. We computed a certified interval containing $b$ using
a Newton Interval Method \cite{Tucker}. The interval enclosing $b$ is
computed giving the result shown at \eqref{eqb}.
\end{remark}

%However, the mathematical treatment of this kind of
%systems will be developed for more general \textquotedblleft bounded
%variation\textquotedblright\ noises and piecewise smooth maps.
In our computer assisted estimates we will hence consider the map given at %
\eqref{Tabc} and uniform noise, however the mathematical treatment about
approximation of stationary measures in Section \ref{firstalgo} and
following is more general. We start considering a general random dynamical
system where $T$ is measurable and $\rho \in BV$ giving general results and
estimates we then improve using more assumptions on the system (i.e. $T$
piecewise smooth) putting ourselves in different general settings which are
stated precisely (see Settings \ref{set1}, \ref{set2}), to keep the
exposition as clear and general as possible.

%\begin{remark}
%We made
%explicit the noise size only for clarity, at each step $\xi $ will be kept
%fixed and in most lemmas $\rho _{\xi }$ is allowed to be any bounded
%variation kernel with support of size $\xi $. 
%
%Only the supplementary results in Section \ref{refinedcf}  are 
%specific to the uniform kernel used in \cite{MT}, i.e., there is assumed $\rho_{\xi
%}=\xi^{-1}1_{[-\xi /2,\xi /2]}$ (where $1_{A}$ stands for the indicator function of the set $A$).
%\end{remark}

\begin{remark}
The reflecting boundary condition at (\ref{systm}) is not influent when the
noise amplitude is smaller than the parameter $b$ (as it will be for all the
noise amplitudes considered in our computer aided proofs, see Table \ref{t1}%
).
\end{remark}

{\noindent \textbf{The transfer operator. }}We study the statistical
properties of the dynamical systems with additive noise, as defined at $($%
\ref{systm}$)$ through the study of the properties of their associated
transfer operators. Let us recall that a measurable map $T:X\rightarrow X$,
induces a map 
\begin{equation*}
L:SM(X)\rightarrow SM(X)
\end{equation*}%
where $SM(X)$ is the space of Borel signed measures on $X.$ The associated
map $L$ is defined in the following way: if $\mu \in SM(X)$ then: 
\begin{equation*}
L\mu (A)=\mu (T^{-1}(A)).
\end{equation*}%
In the literature, $L$ is also called the pushforward map associated to $T$,
sometime denoted by $T^{\ast }$. It is a linear operator on the vector space 
$SM(X)$ and  it is also called the transfer operator associated to $%
T $. The space of measures with density in $L^{1}([0,1])$ can be seen as a
subspace of $SM([0,1])$. If $T$ is nonsingular, $L$ can be considered as an
operator $L^{1}([0,1])\rightarrow L^{1}([0,1])$.

The (annealed) transfer operator $L_{\xi }$ associated to the system with
noise is given by the composition of the transfer operator $L$ and a
reflecting boundary convolution operator $N_{\xi }:SM([0,1])\rightarrow
L^{1}([0,1])$ (a suitable modification of the usual convolution), defined by 
\begin{equation}
N_{\xi }(f):=\rho _{\xi }\hat{\ast}f  \label{refl}
\end{equation}%
where the \textquotedblleft reflecting boundaries
convolution\textquotedblright\ $\hat{\ast}$ is defined similarly to the
reflecting boundaries sum as

\begin{definition}
\label{def:reflectingboundary} Let $\mu \in SM({\mathbb{R}})$. Let $\pi :{\ 
\mathbb{R}}\rightarrow [ 0,1]$ be the piecewise linear map defined at (\ref%
{ppi}) and $\pi ^{\ast }$ its associated pushforward map. We consider $\pi
^{\ast }\mu \in SM([0,1])$ as the \textquotedblleft reflecting
boundary\textquotedblright\ version of $\mu $.
\end{definition}

\begin{definition}
\label{def:hatconv} Let $f\in SM([0,1])$, $\rho _{\xi }\in BV[-{\xi },{\xi }%
] $. Let $\hat{f}\in SM(\mathbb{R})$ defined by $\hat{f}=1_{[0,1]}f$ and $%
\hat{\rho}_{\xi }\in L^{1}(\mathbb{R})$ by $\hat{\rho}_{\xi }=1_{[-{\xi },{%
\xi }]}\rho _{\xi }$. We define 
\begin{equation}
\rho _{\xi }\hat{\ast}f=\pi ^{\ast }(\hat{\rho}_{\xi }\ast \hat{f})
\label{hat}
\end{equation}%
where $\ast $ stands for the usual convolution operator on $\mathbb{R}$.
\end{definition}

This boundary reflecting convolution operator is regularizing, in particular 
$\rho _{\xi }\hat{\ast}f\in BV([0,1])$ if $f\in BV$, and has properties
similar to the usual convolution operator. For its basic properties see
Subsection \ref{summary}.

\begin{definition}
The \textbf{annealed transfer operator} $L_{\xi }:SM([0,1])\rightarrow
L^{1}([0,1])$ associated to a deterministic system with additive noise, as
described at (\ref{systm}) is defined as 
\begin{equation}
L_{\xi }:=N_{\xi }L.  \label{transfdef}
\end{equation}
\end{definition}

\begin{remark}
The annealed transfer operator is obtained by averaging the transfer
operator $L$ over all the possible noise perturbations. We refer to Section %
\ref{sec:rds} for some basic facts on this operator. In the following we
will mainly consider $L_{\xi }$ as an operator $L^{1}([0,1])\rightarrow
L^{1}([0,1])$. \ In the notation we emphasize the dependence of the operator
on the amplitude of the noise.
\end{remark}

\begin{definition}
Let $\mu _{\xi }\in L^{1}([0,1])$ be a fixed probability measure for $L_{\xi
}$, i.e, 
\begin{equation*}
L_{\xi }\mu _{\xi }=\mu _{\xi }
\end{equation*}%
we will call $\mu_{\xi}$ a \textbf{stationary measure for the system with
additive noise} or an \textbf{invariant measure for $L_{\xi }$}.
\end{definition}

By the regularizing properties of the convolution by a Bounded Variation
kernel, and standard compactness arguments it is easy to see that the
transfer operator $L_{\xi }$ corresponding to a map with additive noise has
at least one fixed point $f_{\xi }$ in $BV[0,1]$ (see \cite{GG}, Lemma 23
for more details). Following \cite{MT} we will study the Lyapunov exponent
of the system as $\xi $ varies. The Lyapunov exponent is defined as follows:

\begin{definition}
The \textbf{average Lyapunov exponent} associated to noise size $\xi$ is 
\begin{equation}
\lambda _{\xi }:=\int_{0}^{1}\log |T^{\prime }(x)|d\mu _{\xi }.  \label{li}
\end{equation}
\end{definition}

%\begin{summary}
%We consider a sequence of values for $\xi$ and for those values\marginpar{non capisco tanto il senso di questo environment}
%we prove, with a certified computer estimate that
%\%begin{enumerate}
%\item $\lambda _{\xi }$ is contained in some small interval,
%\item $\mu_{\xi}$ is ergodic ().
%\end{enumerate}
When $\mu _{\xi }$ is ergodic the average Lyapunov exponent coincides almost
everywhere with the pointwise Lyapunov exponent (see Section \ref{sec:rds}).
As a byproduct of our computer aided estimates, using the results given in
Section \ref{coarsefine} and \ref{sec7} we will prove that the systems
considered are ergodic (see Proposition \ref{ergo}). This justifies the
correctedness of the average Lyapunov exponent as an indicator of the
behaviour of the system. 
%and permits us to prove the transition from positive to
%negative Lyapunov exponent.
%\end{summary}

{\noindent \textbf{The Ulam Approximation.}} The main tool for the study of
the behavior of (\ref{li}) in this work is the rigorous approximation of the
stationary measure $\mu_{\xi}$. This is done by approximating $L_{\xi }$ by
a finite dimensional operator $L_{\delta ,\xi}:L^{1}([0,1])\rightarrow
L^{1}([0,1])$. The fixed points of $L_{\xi }$ are then approximated by the
ones of $L_{\delta ,\xi }$ with a certified bound on the approximation error.

Let $\pi _{\delta }:L^{1}([0,1])\rightarrow L^{1}([0,1])$ be a projection on
a finite dimensional space defined in the following way: the space $[0,1]$
is discretized by a partition $I_{\delta }$ (with $k$ elements); the
projection considered is defined by the conditional expectation 
\begin{equation}  \label{pidef}
\pi _{\delta }(f)=\mathbf{E}(f|F_{\delta })
\end{equation}
where $F_{\delta }$ is the $\sigma -$algebra associated to the partition $%
I_{\delta }.$

The approximated operator is then defined by finite element approach,
composing with $\pi$: 
\begin{equation*}
L_{\delta ,\xi }:=\pi _{\delta }N_{\xi }\pi _{\delta }L\pi _{\delta }.
\end{equation*}

The finite dimensional approximation of an operator based on the conditional
expectation, as above, is commonly called \emph{Ulam discretization} or 
\emph{Ulam method}. This method was widely studied in the literature (see,
e.g \cite{DelJu02},\cite{L},\cite{BM},\cite{BG},\cite{GN}).

Observe that 
\begin{equation*}
L_{\delta ,\xi }^{n}=(\pi _{\delta }N_{\xi }\pi _{\delta }L)^{n}\pi _{\delta
},
\end{equation*}%
taking into account that $\pi _{\delta }^{2}=\pi _{\delta }$. We remark that
since $||\pi _{\delta }||_{L^{1}\rightarrow L^{1}}\leq 1$ and $%
||L||_{L^{1}\rightarrow L^{1}}\leq 1$, then $||L_{\delta ,\xi
}||_{L^{1}\rightarrow L^{1}}\leq 1$.

\begin{remark}
Another discretization that could be used is 
\begin{equation*}
\tilde{L}_{\delta ,\xi }=\pi _{\delta }N_{\xi }L\pi _{\delta };
\end{equation*}%
while this definition is reasonable, it is more difficult to implement and
would force us to recompute the discretized operator for each size $\xi $ of
the noise. Our definition permits us to compute once and for all $\pi
_{\delta }L\pi _{\delta }$ which is computationally expensive but leads to a
sparse matrix, and then apply the operator $\pi _{\delta }N_{\xi }\pi
_{\delta }$ which is independent of the dynamics. %\footnote{\textbf{S} Qui
%forse un riferimento a dopo quando si discutono i dettagli della
%implementazione ci starebbe.}
\end{remark}

%%% Local Variables: 
%%% mode: latex
%%% TeX-master: "noise_induced_order"
%%% End: 

%\end{document}

\section{Rigorous approximation of the stationary measure for dynamical systems with additive noise}
\label{sec3}

%\begin{document}

\label{firstalgo}

The certified approximation of the Lyapunov exponent is based on the
certified approximation of the stationary measure of the system in the $%
L^{1} $ norm. This is the main part of our general construction and is
described in this section. The algorithm to approximate the stationary
measure uses both \emph{a priori} (analytical) and \emph{a posteriori}
(computer assisted) estimates on the measure and on the transfer operator.%
\emph{\ For these estimates to be performed we do not need particular
expansion or hyperbolicity properties of the one dimensional map driving the
deterministic part of the dynamics.} We now introduce a general and simple
algorithm which works for measurable maps perturbed by additive noise with a
Bounded Variation kernel, in Section \ref{post} we refine this algorithm,
assuming that the map is piecewise smooth and getting much better estimates.
In this first part of the section we will hence work in the following
setting. 
\begin{setting}\label{set1}
Let us suppose $L_{\xi }$ is the transfer operator of a system with additive noise as considered in \eqref{systm}. We suppose the noise is distributed according to a Bounded Variation kernel $\rho_\xi$ with support in $[-\frac{\xi}2,\frac{\xi}2]$ and the deterministic part of the system is driven by a measurable map $T$.
\end{setting}

Let $L_{\delta ,\xi }$ be the Ulam approximation of $L_{\xi }$, defined by
projecting on a partition of size $\delta $. Let $f_{\delta ,\xi },$ $f_{\xi
}\in L^{1}$ respectively be invariant probability measures of $L_{\delta
,\xi }$ and $L_{\xi }$. Since the measure $f_{\delta ,\xi }$ is a fixed
point of the finite dimensional operator $L_{\delta ,\xi }$, it can be
computed to any precision. We will treat now the issue of estimating $\Vert
f_{\delta ,\xi }-f_{\xi }\Vert _{L^{1}}$ effectively.

\begin{lemma}
\label{contrle}Suppose that for some $\overline{n}\in \mathbb{N}$ 
\begin{equation}
||L_{\delta ,\xi }^{\overline{n}}|_{V}||_{L^{1}\rightarrow L^{1}}\leq \alpha
<1  \label{contreq0}
\end{equation}%
where $V=\{\nu \in L^{1},\nu ([0,1])=0\}$ being the space of zero-average
measures. Then 
\begin{equation}
\Vert f_{\xi }-f_{\xi ,\delta }\Vert _{L^{1}}\leq \frac{1}{1-\alpha }%
\left\Vert (L_{\delta ,\xi }^{\overline{n}}-L_{\xi }^{\overline{n}})f_{\xi
}\right\Vert _{L^{1}}.  \label{eqmain}
\end{equation}
\end{lemma}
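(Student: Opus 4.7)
The plan is to exploit the fact that both $f_\xi$ and $f_{\delta,\xi}$ are probability measures, hence their difference $f_\xi - f_{\delta,\xi}$ lies in the zero-average subspace $V$, which is exactly where the contraction hypothesis applies. The identity I will build on is that $f_\xi$ and $f_{\delta,\xi}$ are fixed by their respective operators and, in particular, by the $\overline{n}$-th iterates.

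More concretely, I will write
\begin{equation*}
f_\xi - f_{\delta,\xi} = L_\xi^{\overline{n}} f_\xi - L_{\delta,\xi}^{\overline{n}} f_{\delta,\xi} = (L_\xi^{\overline{n}} - L_{\delta,\xi}^{\overline{n}}) f_\xi + L_{\delta,\xi}^{\overline{n}}(f_\xi - f_{\delta,\xi}),
\end{equation*}
by adding and subtracting $L_{\delta,\xi}^{\overline{n}} f_\xi$. Taking $L^1$ norms and applying the triangle inequality, the first term is exactly the quantity appearing on the right hand side of \eqref{eqmain}. For the second term, since $f_\xi$ and $f_{\delta,\xi}$ are both probability densities on $[0,1]$, the difference $f_\xi - f_{\delta,\xi}$ belongs to $V$, so the contraction hypothesis \eqref{contreq0} gives
\begin{equation*}
\bigl\| L_{\delta,\xi}^{\overline{n}}(f_\xi - f_{\delta,\xi}) \bigr\|_{L^1} \leq \alpha \, \|f_\xi - f_{\delta,\xi}\|_{L^1}.
\end{equation*}

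Combining these bounds yields
\begin{equation*}
\|f_\xi - f_{\delta,\xi}\|_{L^1} \leq \bigl\|(L_\xi^{\overline{n}} - L_{\delta,\xi}^{\overline{n}}) f_\xi\bigr\|_{L^1} + \alpha \, \|f_\xi - f_{\delta,\xi}\|_{L^1},
\end{equation*}
and since $\alpha<1$ we may rearrange to isolate $\|f_\xi - f_{\delta,\xi}\|_{L^1}$ and obtain the desired bound with factor $\frac{1}{1-\alpha}$.

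There is no serious obstacle here; the argument is the standard ``fixed point stability under contraction'' trick. The only subtle points I will make explicit are (i) that $f_\xi - f_{\delta,\xi}$ really lies in $V$ (which is immediate from normalization of probability measures together with the fact that both $L_\xi$ and $L_{\delta,\xi}$ preserve the integral, so fixed points can indeed be normalized to probability measures), and (ii) that writing the telescoping identity through the $\overline{n}$-th iterate rather than a single step is precisely what is needed so that the contraction rate $\alpha<1$, which may only hold for an iterate and not for $L_{\delta,\xi}$ itself, can be used.
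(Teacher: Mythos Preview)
Your proof is correct and essentially identical to the paper's own argument: both use that $f_\xi$ and $f_{\delta,\xi}$ are fixed by the $\overline{n}$-th iterates of their respective operators, add and subtract $L_{\delta,\xi}^{\overline{n}} f_\xi$, apply the contraction hypothesis on $V$ to the difference $f_\xi - f_{\delta,\xi}$, and rearrange. The only difference is the order in which terms are written; your explicit remarks on why $f_\xi - f_{\delta,\xi}\in V$ and why the $\overline{n}$-th iterate is needed are welcome clarifications that the paper leaves implicit.
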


\begin{remark}
We remark that $L_{\delta ,\xi }|_{V}$ is a finite dimensional operator and
can be represented by a matrix. Thus is possible for a computer to verify
that $||L_{\delta ,\xi }^{\overline{n}}|_{V}||_{L^{1}\rightarrow L^{1}}\leq
\alpha $ for some $\overline{n}$.
\end{remark}

\begin{proof}
(of Lemma \ref{contrle}) \ Since both $f_{\xi }$, $f_{\delta ,\xi }$ are
fixed points we can write 
\begin{eqnarray*}
\Vert f_{\delta ,\xi }-f_{\xi }\Vert _{L^{1}} &=&\Vert L_{\delta ,\xi
}^{n}f_{\delta ,\xi }-L_{\xi }^{n}f_{\xi }\Vert _{L^{1}} \\
&=&\Vert L_{\delta ,\xi }^{n}f_{\delta ,\xi }-L_{\delta ,\xi }^{n}f_{\xi
}+L_{\delta ,\xi }^{n}f_{\xi }-L_{\xi }^{n}f_{\xi }\Vert _{L^{1}} \\
&\leq &\Vert L_{\delta ,\xi }^{n}(f_{\delta ,\xi }-f_{\xi })\Vert
_{L^{1}}+\Vert (L_{\delta ,\xi }^{n}-L_{\xi }^{n})f_{\xi }\Vert _{L^{1}}
\end{eqnarray*}

Then 
\begin{equation*}
\Vert f_{\xi }-f_{\xi ,\delta }\Vert _{L^{1}}\leq \alpha ||f_{\xi }-f_{\xi
,\delta }\Vert _{L^{1}}+\left\Vert (L_{\delta ,\xi }^{\overline{n}}-L_{\xi
}^{\overline{n}})f_{\xi }\right\Vert _{L^{1}}
\end{equation*}%
implying the statement.
\end{proof}

\subsection{An informal description of the main algorithm to compute the
stationary measure up to a small given error. \label{infoalgo}}

Based on Lemma \ref{contrle}, a strategy to rigorously bound $\Vert
f_{\delta ,\xi }-f_{\xi }\Vert _{L^{1}}$ is the following. The computer will
find an $\overline{n}$ such that \eqref{contreq0} is satisfied, then %
\eqref{eqmain} will give an estimate for the approximation error. We remark
that if $\delta $ is small enough, $\left\Vert (L_{\delta ,\xi }^{\overline{n%
}}-L_{\xi }^{\overline{n}})f_{\xi }\right\Vert _{L^{1}}$ has a chance of
being small, since it is the difference of two nearby operators, both
applied to the same regular (Bounded Variation) measure. This is where the
size $\delta $ of the approximation grid has a role in the quality of the
approximation. On the other hand, $\overline{n}$ may depend on $\delta $.
This is why a priori estimates on the approximation error are not trivial.
Lemma \ref{contrle} \ provides some a posteriori estimate on the error; the
approximation error is known after the computer certifies the $\overline{n}$
and the $\alpha $ for which \eqref{contreq0} \ is satisfied.

Hence the main algorithm for the approximation of the invariant measure will
work as follows:

\begin{enumerate}
\item Given the grid size $\delta $, compute $L_{\delta ,\xi }$ and $f_{\xi
,\delta }$ up to some prescribed precision.

\item Find a good $\overline{n}$ and $\alpha $: we estimate $||L_{\delta
,\xi }^{n}|_{V}||_{L^{1}\rightarrow L^{1}}$ in an efficient way, finding a
good compromise between $\overline{n}$ and $\alpha $, in a way that $\alpha $
is not too close to $1$ and $\overline{n}$ not too big. We remark that the
norm of the finite dimensional operator $L_{\delta ,\xi }^{n}$ is directly
computable in principle, but if $\delta $ is small then the size of the
associated matrix is huge and this can be a hard computational task. For
this we use a \emph{coarse-fine} strategy which is explained in Section \ref%
{coarsefine}, and which takes into account that the huge matrix representing 
$L_{\delta ,\xi }$ is actually coming from a certain dynamical system with
noise.

\item Find a good estimate for $\left\Vert (L_{\delta ,\xi }^{\overline{n}%
}-L_{\xi }^{\overline{n}})f_{\xi }\right\Vert _{L^{1}}$. We remark that in
this formula $f_{\xi }$ is not known, but still we can find enough
information on it to estimate the difference of operators we are interested
in. This will be done by a method using both a priori and a posteriori
estimates, using an approximated knowledge of $f_{\xi }$ and its variation.
The procedure is explained in Section \ref{telescopicf} and in the following
sections. A simple but not efficient bound (equation \eqref{easy}) is proved
in \ref{init}; we refine the method in Section \ref{post} greatly improving
the efficency of the estimate with a bootstrap argument.

\item  Estimate the approximation
error $\Vert f_{\xi,\delta}-f_{\xi }\Vert _{L^{1}}$ by Lemma \ref{contrle}.

\end{enumerate}

\begin{comment}
An overview of the main algorithm will be presented in pseudo-code in
Section \ref{algsec}, after describing all necessary mathematical
ingredients.\footnote{un commentino che rimanda a dove si spiega come è stimato $\|f_{\xi,\delta} -
\tilde{f}\|_{L^1}$? }
\end{comment}

\subsection{A bound for $\left\Vert (L_{\protect\delta,\protect\xi}^{%
\overline{n}}-L_{\protect\xi}^{\overline{n}})f_{\protect\xi}\right\Vert
_{L^{1}}$}

\label{telescopicf} Having outlined the main algorithm we now show how to
perform the main needed estimates. In this Section we describe how to
estimate the quantity appearing on the right hand side of \eqref{eqmain}.
This will be done by splitting this term in different parts which will be
treated differently. We start estimating the term as a telescopic sum whose
summands will be estimated in the following subsections.

\begin{lemma}
\label{ex4}Let $L_{\xi }$ the transfer operator of the random system and $%
L_{\delta ,\xi }$ its Ulam approximation, as defined in Section \ref{map}.
Let $f_{\xi }$ be an invariant probability measure for $L_{\xi }$, it holds 
\begin{align}
& \left\Vert (L_{\delta ,\xi }^{\overline{n}}-L_{\xi }^{\overline{n}})f_{\xi
}\right\Vert _{L^{1}}\leq \Vert (\pi _{\delta }-1)f_{\xi }\Vert
_{L^{1}}+\left( \sum_{i=0}^{\overline{n}-1}\left\Vert L_{\delta ,\xi
}|_{V}^{i}\right\Vert _{L^{1}\rightarrow L^{1}}\right) \times
\label{aposteriori_telescopic} \\
& \qquad \times \left( \left\Vert N_{\xi }(\pi _{\delta }-1)Lf_{\xi
}\right\Vert _{L^{1}}+\left\Vert N_{\xi }\pi _{\delta }L(\pi _{\delta
}-1)f_{\xi }\right\Vert _{L^{1}}\right)  \notag
\end{align}
\end{lemma}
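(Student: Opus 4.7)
The plan is to start from the identity $(L_{\delta,\xi}^{\overline n}-L_\xi^{\overline n})f_\xi = L_{\delta,\xi}^{\overline n}f_\xi - f_\xi$ (using $L_\xi f_\xi = f_\xi$) and exploit the standard telescoping
\[
(L_{\delta,\xi}^{\overline n}-L_\xi^{\overline n})f_\xi \;=\; \sum_{i=0}^{\overline n-1} L_{\delta,\xi}^i(L_{\delta,\xi}-L_\xi)L_\xi^{\overline n-1-i}f_\xi \;=\; \sum_{i=0}^{\overline n-1} L_{\delta,\xi}^i(L_{\delta,\xi}-L_\xi)f_\xi,
\]
again using invariance of $f_\xi$ under $L_\xi$. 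The key algebraic step is to split $(L_{\delta,\xi}-L_\xi)f_\xi$ into three ``single-$(\pi_\delta-1)$'' pieces. Writing $\pi=\pi_\delta$ for brevity and inserting $\pm\pi N_\xi\pi L$, $\pm\pi N_\xi L$ to telescope:
\[
(L_{\delta,\xi}-L_\xi)f_\xi \;=\; \pi N_\xi \pi L(\pi-1)f_\xi \;+\; \pi N_\xi(\pi-1)Lf_\xi \;+\; (\pi-1)N_\xi L f_\xi.
\]
Since $N_\xi L f_\xi = L_\xi f_\xi = f_\xi$, the last summand equals $(\pi_\delta-1)f_\xi$.

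Next I would exploit two simple but crucial facts. First, because $\pi_\delta^2=\pi_\delta$, the operator $L_{\delta,\xi}=\pi_\delta N_\xi\pi_\delta L\pi_\delta$ annihilates anything of the form $(\pi_\delta-1)g$: $L_{\delta,\xi}(\pi_\delta-1)g = \pi_\delta N_\xi\pi_\delta L\pi_\delta(\pi_\delta-1)g = 0$. Consequently the only surviving contribution of $(\pi_\delta-1)f_\xi$ in the telescoped sum is the $i=0$ term, producing exactly the first term $\|(\pi_\delta-1)f_\xi\|_{L^1}$ in the bound. Second, all three building blocks $\pi_\delta$, $N_\xi$, $L$ preserve integrals, so $\pi_\delta N_\xi\pi_\delta L(\pi_\delta-1)f_\xi$ and $\pi_\delta N_\xi(\pi_\delta-1)Lf_\xi$ both have zero mean; they therefore lie in $V$, so applying $L_{\delta,\xi}^i$ to them incurs the factor $\|L_{\delta,\xi}|_V^i\|_{L^1\to L^1}$.

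Putting these together and using $\|\pi_\delta\|_{L^1\to L^1}\le 1$ to strip the leading $\pi_\delta$ from each of the two $V$-valued terms,
\[
\|L_{\delta,\xi}^i \pi_\delta N_\xi \pi_\delta L(\pi_\delta-1)f_\xi\|_{L^1} \le \|L_{\delta,\xi}|_V^i\|_{L^1\to L^1}\,\|N_\xi\pi_\delta L(\pi_\delta-1)f_\xi\|_{L^1},
\]
and analogously for the middle term, the triangle inequality applied to the telescoped sum yields the claimed bound. I do not anticipate a real obstacle here: the only subtlety is to notice the cancellation $L_{\delta,\xi}(\pi_\delta-1)=0$ that collapses the first of the three summands to a single $i=0$ contribution (without the $\sum_i\|L_{\delta,\xi}|_V^i\|$ prefactor), while the other two summands are automatically zero-average so that the restriction to $V$ is legitimate.
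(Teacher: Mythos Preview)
Your proof is correct and follows essentially the same telescoping strategy as the paper. The paper telescopes by stripping one $\pi_\delta$ at a time from $(\pi_\delta N_\xi\pi_\delta L)^{\overline n}\pi_\delta$ and then reindexes, whereas you telescope over powers and decompose $L_{\delta,\xi}-L_\xi$ into three pieces; your explicit use of the cancellation $L_{\delta,\xi}(\pi_\delta-1)=0$ to isolate the lone $\|(\pi_\delta-1)f_\xi\|_{L^1}$ term is a slightly cleaner way to reach the same bound.
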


\begin{proof}
The proof is based on a telescopic decomposition. We have%
\begin{eqnarray*}
L_{\delta ,\xi }-L_{\xi } &=&\pi _{\delta }N_{\xi }\pi _{\delta }L\pi
_{\delta }-N_{\xi }L \\
&=&N_{\xi }L-\pi _{\delta }N_{\xi }L \\
&&+\pi _{\delta }N_{\xi }L-\pi _{\delta }N_{\xi }\pi _{\delta }L \\
&&+\pi _{\delta }N_{\xi }\pi _{\delta }L-\pi _{\delta }N_{\xi }\pi _{\delta
}L\pi _{\delta }.
\end{eqnarray*}
Performing a similar decomposition to $L_{\delta ,\xi }^{\overline{n}}=(\pi
_{\delta }N_{\xi }\pi _{\delta }L)^{\overline{n}}\pi _{\delta }$. Pairing in
a suitable way the terms we inserted we obtain:

\begin{align*}
\left\Vert (L_{\delta,\xi}^{\overline{n}}-L_{\xi}^{\overline{n}})f_{\xi
}\right\Vert _{L^{1}} &= \left\Vert \big[(\pi _{\delta }N_{ \xi }\pi
_{\delta }L)^{\overline{n}}\pi _{ \delta }-(N_{\xi }L)^{\overline{n}}\big]%
f_{\xi }\right\Vert _{L^{1}} \\
&\leq \sum_{i=0}^{\overline{n}}\left\Vert (\pi _{\delta }N_{\xi }\pi
_{\delta }L)^{i}(\pi _{\delta }-1)(N_{\xi }L)^{n-i}f_{\xi }\right\Vert
_{L^{1}} \\
&\qquad+\sum_{i=0}^{\overline{n}-1}\left\Vert (\pi _{\delta }N_{\xi }\pi
_{\delta }L)^{i}\pi _{\delta }N_{\xi }(\pi _{\delta }-1)L(N_{\xi
}L)^{n-i-1}f_{\xi }\right\Vert _{L^{1}} \\
&=\sum_{i=0}^{\overline{n}}\left\Vert (\pi _{\delta }N_{\xi }\pi _{\delta
}L)^{i}(\pi _{\delta }-1)f_{\xi }\right\Vert _{L^{1}} \\
&\qquad+\sum_{i=0}^{\overline{n}-1}\left\Vert (\pi _{\delta }N_{\xi }\pi
_{\delta }L)^{i}\pi _{\delta }N_{\xi }(\pi _{\delta }-1)Lf_{\xi }\right\Vert
_{L^{1}}
\end{align*}%
considering that $f_{\xi }$ is fixed by $N_{\xi }L=L_\xi$. Shifting indexes
by $1$ in the the first sum, the estimate can be written as 
\begin{align}
&\left\Vert (L_{\delta,\xi}^{\overline{n}}-L_{\xi}^{\overline{n}})f_{\xi
}\right\Vert _{L^{1}} \leq  \notag \\
\leq &\|(\pi_\delta-1)f_\xi\|_{L^1} + \sum_{i=0}^{\overline{n}-1}\left\Vert
(\pi _{\delta }N_{\xi }\pi _{\delta }L)^{i}\pi_\delta|_V\right\Vert_{L^1\to
L^1}\cdot \left\Vert N_{\xi }\pi _{\delta }L(\pi _{\delta
}-1)f_{\xi}\right\Vert _{L^{1}}  \notag \\
&\qquad+\sum_{i=0}^{\overline{n}-1}\left\Vert (\pi _{\delta }N_{\xi }\pi
_{\delta }L)^{i}\pi _{\delta }|_V\right\Vert_{L^1\to L^1}\cdot \left\Vert
N_{\xi }(\pi _{\delta }-1)Lf_{\xi }\right\Vert _{L^{1}}  \notag \\
=& \|(\pi_\delta-1) f_\xi\|_{L^1} + \left(\sum_{i=0}^{\overline{n}%
-1}\left\Vert L_{\delta,\xi}|_V^{i}\right\Vert_{L^1\to L^1}\right)\times 
\notag \\
&\qquad\times \left( \left\Vert N_{\xi }(\pi _{\delta }-1)Lf_{\xi
}\right\Vert_{L^{1}} + \left\Vert N_{\xi }\pi _{\delta }L(\pi _{\delta
}-1)f_{\xi}\right\Vert_{L^{1}} \right)  \notag
\end{align}
(notice that $(\pi_\delta-1)g$ has always average zero for any $g$, and
consequently belongs to $V$).
\end{proof}

\subsubsection{An initial (a priori) bound for $\Vert f_{\protect\xi }-f_{%
\protect\xi ,\protect\delta }\Vert _{L^{1}} $}

\label{init}

Now we show a strategy to get a simple effective bound for the approximation
error $\Vert f_{\xi }-f_{\xi ,\delta }\Vert _{L^{1}}$ based Lemma \ref{ex4},
estimating the summands on the right hand side of (\ref%
{aposteriori_telescopic}) by quantities which are known from the description
of the system or can be computed by its approximated transfer operator. In
the next section we will improve the method, using more information on $T$
and $f_{\xi }$ and getting much more efficient estimates.

\begin{lemma}
\label{Lemmaeasy} Let $f_{\xi }$ a stationary measure for a system defined
as in (\ref{systm}) and $f_{\xi ,\delta }$ a stationary measure for its Ulam
approximation, as defined in Section \ref{map}. If there is $\overline{n}$
such that 
\begin{equation}
||L_{\delta ,\xi }^{\overline{n}}|_{V}||_{L^{1}\rightarrow L^{1}}\leq \alpha
<1  \label{contreq}
\end{equation}%
then 
\begin{equation}
\Vert f_{\xi }-f_{\xi ,\delta }\Vert _{L^{1}}\leq \frac{1+2\sum_{i=0}^{%
\overline{n}-1}C_{i}}{2(1-\alpha )}\,\delta \xi ^{-1}\,\mathrm{Var}(\rho ).
\label{easy}
\end{equation}%
where $0\leq C_{i}\leq 1$ are such that  $\Vert L_{\delta ,\xi }^{i}|_{V}\Vert
_{L^{1}\rightarrow L^{1}} \leq C_{i} $.
\end{lemma}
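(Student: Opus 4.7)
My plan is to combine the two ingredients already developed in this subsection: the contraction-based bound of Lemma~\ref{contrle} and the telescopic decomposition of Lemma~\ref{ex4}. Lemma~\ref{contrle} immediately gives the prefactor $\frac{1}{1-\alpha}$, so everything reduces to estimating the right-hand side of~\eqref{aposteriori_telescopic}. That right-hand side is a sum of three kinds of terms, each of which I will bound by $\tfrac{\delta}{2}\xi^{-1}\mathrm{Var}(\rho)$, which will immediately give the claimed inequality after collecting the factor $\bigl(1+2\sum_{i=0}^{\overline{n}-1}C_i\bigr)$.

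The bounds on the three terms go as follows. For $\|(\pi_\delta-1)f_\xi\|_{L^1}$, I use the standard Ulam projection estimate $\|(\pi_\delta-1)g\|_{L^1}\leq\tfrac{\delta}{2}\mathrm{Var}(g)$, together with the observation that $f_\xi=N_\xi L f_\xi$ is itself a convolution, so $\mathrm{Var}(f_\xi)\leq\mathrm{Var}(\rho_\xi)\|Lf_\xi\|_{L^1}=\mathrm{Var}(\rho_\xi)=\xi^{-1}\mathrm{Var}(\rho)$. For $\|N_\xi\pi_\delta L(\pi_\delta-1)f_\xi\|_{L^1}$, I simply use that $N_\xi$, $\pi_\delta$ and $L$ are all $L^1$-contractions, peeling them off one by one, so this term is bounded by $\|(\pi_\delta-1)f_\xi\|_{L^1}$, already estimated. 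For $\|N_\xi(\pi_\delta-1)Lf_\xi\|_{L^1}$ I invoke the smoothing property of the reflecting-boundary convolution: since $(\pi_\delta-1)Lf_\xi$ has zero mean on each partition element, the operator $N_\xi(\pi_\delta-1)$ gains a factor $\tfrac{\delta}{2}\mathrm{Var}(\rho_\xi)$ in operator norm on $L^1$, and $\|Lf_\xi\|_{L^1}=1$.

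Putting these three estimates into~\eqref{aposteriori_telescopic} gives
\[
\bigl\|(L_{\delta,\xi}^{\overline{n}}-L_\xi^{\overline{n}})f_\xi\bigr\|_{L^1}\leq\frac{\delta\,\mathrm{Var}(\rho)}{2\xi}+\left(\sum_{i=0}^{\overline{n}-1}C_i\right)\cdot 2\cdot\frac{\delta\,\mathrm{Var}(\rho)}{2\xi}=\frac{\delta\,\mathrm{Var}(\rho)}{2\xi}\Bigl(1+2\sum_{i=0}^{\overline{n}-1}C_i\Bigr),
\]
and multiplying by $\frac{1}{1-\alpha}$ via Lemma~\ref{contrle} yields~\eqref{easy}.

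The only non-routine ingredient is the smoothing estimate used for the middle term, namely that $\|N_\xi h\|_{L^1}\lesssim \tfrac{\delta}{2}\mathrm{Var}(\rho_\xi)\|h\|_{L^1}$ whenever $h$ has vanishing average on every element of $I_\delta$. This is the one place where the BV assumption on $\rho$ in Setting~\ref{set1} is actually used, and it is the main technical content; I would either import it from the auxiliary lemmas announced for Section~\ref{sec3} or prove it directly by writing $h=\sum_i h\cdot \mathbf{1}_{I_i}$, replacing $\rho_\xi(x-y)$ by $\rho_\xi(x-y_i)$ for a fixed $y_i\in I_i$ inside each piece (legitimate because $h\cdot \mathbf{1}_{I_i}$ is mean-zero), and bounding the error by $\int|\rho_\xi(x-y)-\rho_\xi(x-y_i)|\,dx\leq\mathrm{Var}(\rho_\xi)$ times $\tfrac{\delta}{2}$. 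Apart from that, everything is contractivity of $L$, $\pi_\delta$, $N_\xi$ on $L^1$ and the elementary Ulam error bound.
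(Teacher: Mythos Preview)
Your proof is correct and follows essentially the same route as the paper: combine Lemma~\ref{contrle} with the telescopic bound of Lemma~\ref{ex4}, then estimate each of the three terms in~\eqref{aposteriori_telescopic} by $\tfrac{\delta}{2}\xi^{-1}\mathrm{Var}(\rho)$ using, respectively, the bounds $\|(1-\pi_\delta)N_\xi\|_{L^1\to L^1}\leq\tfrac{\delta}{2}\xi^{-1}\mathrm{Var}(\rho)$ and $\|N_\xi(1-\pi_\delta)\|_{L^1\to L^1}\leq\tfrac{\delta}{2}\xi^{-1}\mathrm{Var}(\rho)$, together with the fact that $f_\xi=N_\xi Lf_\xi$ and that $N_\xi$, $\pi_\delta$, $L$ are $L^1$-contractions. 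The only cosmetic difference is that the paper packages these two key bounds as Corollary~\ref{c1} and Proposition~\ref{prosop}, proving the latter via the Wasserstein-like $W$-norm factorization $\|N_\xi(1-\pi_\delta)\|_{L^1}\leq\|1-\pi_\delta\|_{L^1\to W}\cdot\|N_\xi\|_{W\to L^1}$, whereas your sketched direct argument (subtracting $\rho_\xi(x-y_i)$ on each partition element and using the BV translation modulus $\|\rho_\xi(\cdot-y)-\rho_\xi(\cdot-y_i)\|_{L^1}\leq|y-y_i|\,\mathrm{Var}(\rho_\xi)$) yields the same constant by a slightly different path.
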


\begin{proof}
The proof of the lemma is based on the following estimates, proved in
Corollary \ref{c1} and Prop. \ref{prosop} (Section \ref{appendix}) allowing
a first estimate on the right hand side of \eqref{eqmain}. We have 
\begin{equation}  \label{sopra}
\Vert N_{\xi }(1-\pi _{\delta })\Vert _{L^{1}\rightarrow L^{1}}\leq \frac{1}{%
2}\delta \xi ^{-1}\mathrm{Var}(\rho ).
\end{equation}
\begin{equation}  \label{sotto}
\Vert (1-\pi _{\delta })N_{\xi }\Vert _{L^{1}\rightarrow L^{1}}\leq \frac{1}{%
2}\delta \xi ^{-1}\mathrm{Var}(\rho ).
\end{equation}
Now we apply (\ref{sopra},\ref{sotto}) to the summands of the righ hand side
of (\ref{aposteriori_telescopic}). We see that all the items there have
either a $N_\xi(1-\pi_\delta)$ or a $(1-\pi_\delta)N_\xi$ appearing. Indeed
since $||f_\xi||_1 \leq 1 $ 
\begin{equation*}
\|(\pi_\delta-1)f_\xi\|_{L^1}= \|(\pi_\delta-1)N_\xi L f_\xi\|_{L^1}\leq 
\frac{1}{ 2}\delta \xi ^{-1}\Var(\rho ).
\end{equation*}
Similarly the other summands, can be estimated recalling that $\Vert
\pi_\delta \Vert _{L^{1}\rightarrow L^{1}}\leq 1$ and $\Vert N_\xi \Vert
_{L^{1}\rightarrow L^{1}} \leq 1$. Applying (\ref{eqmain}) we get the
statement.
\end{proof}

The estimate given at \eqref{easy} mainly depend on the ratio $\delta \xi
^{-1}$ between the partition size and the noise amplitude. This estimate is
obtained without any information on the deterministic part of the dynamics,
only the information about the contraction rate of the approximated transfer
operator $L_{\delta ,\xi }$ (to obtain $\overline{n}$ and $\alpha $). This
would already allow to obtain a good approximation for the invariant density 
$f_{\xi }$, in principle, if we had enough computation power to carry on the
computation with a very small $\delta $. Unfortunately, in the
Matsumoto-Tsuda system, positive Lyapunov exponent appears for very small
sizes of the noise making the computation unfeasible even with the help of a
supercomputer, due to the growth of the computational complexity as $\xi $
becomes small.

Therefore, we have to apply a more subtle and complicated strategy where the
bootstrap argument comes into play, i.e., using some information on $f_\xi$
(and in particular about its variation in given intervals) which we can
obtain with a preliminary computation.

\subsection{A stronger (a posteriori) bound}

\label{post}

In this section we analyze better \eqref{aposteriori_telescopic} and see how
using some more assumptions on $T$, more information on $f_{\xi }$ \ and the
use of the Wasserstein distance, we can drastically improve the estimates
given in Lemma \ref{Lemmaeasy}. We remark that the explicit error bound
provided by Lemma \ref{Lemmaeasy} is  proportional to $\delta \xi
^{-1};$ the new error estimate will be a sum where most summands are
proportional to $\delta ^{2}\xi ^{-1}$.

\begin{setting}\label{set2}  From now on we will suppose we are in the framework of Setting \ref{set1} and furthermore we suppose $T$ being piecewise smooth. We suppose that there is a partition $\{P_{i}\}_{1\leq
i\leq k}$ such that 

\begin{itemize}
\item each $P_{i}$ is an interval,

\item on each $P_{i}$ the branch $T_{i}:=T|_{P_{i}}$ is monotonic and $C^{2}$
in the interior of $P_{i}$

\item The limits of $T_{i}^{\prime }(x)$ as $x$ tend to the frontier of $%
P_{i}$ exist in $\mathbb{R}\cup 	\{-\infty,\infty\}$.
\end{itemize}
\end{setting}

We will consider the transfer operator $L$ related to the map and to every
branch of the map. For each $L^1$ density $g$, we let $L_{i}g$ be the
component of $Lg$ coming from the $i$-th monotone branch (the pushforward
map related to $T_i$), that is 
\begin{equation}  \label{Li}
L_{i}g(x)=\left\{ 
\begin{array}{cc}
\frac{g(T_{i}^{-1}(x))}{T^{\prime }(T_{i}^{-1}(x))} & \text{ if }x\in
T_i(I_i), \\ 
0 & \text{ elsewhere.}%
\end{array}%
\right.
\end{equation}
In this way we have $Lg=\sum_i L_ig$.

Once the discretized transfer operator $L_{\delta ,\xi }$ is computed and
has a unique fixed probability measure $f_{\delta ,\xi },$ an approximation $%
\tilde{f}$ \ for $f_{\delta ,\xi }$ can be computed up to any given small
error in $L^{1}$ (this is the computation of the fixed point of the big
matrix representing $L_{\delta ,\xi }$). Let us assume that a numerical
approximation $\tilde{f}$ of $f_{\delta ,\xi }$ in the $L^{1}$ norm is
computed. Using $\tilde{f}$, we will look for a strategy for estimating the
total error $\Vert f_{\xi }-\tilde{f}\Vert _{L^{1}}$, that includes the
appoximation error $\Vert f_{\xi }-f_{\delta ,\xi }\Vert _{L^{1}}$ (because
we approximated on a partition of size $\delta $) and the numerical error $%
\Vert f_{\delta ,\xi }-\tilde{f}\Vert _{L^{1}}$. We remark that%
\begin{equation}
\Vert f_{\xi }-\tilde{f}\Vert _{L^{1}}\leq \Vert f_{\delta ,\xi }-\tilde{f}%
\Vert _{L^{1}}+\Vert f_{\xi }-f_{\delta ,\xi }\Vert _{L^{1}}.  \label{neweq1}
\end{equation}%
For the estimation of $\Vert f_{\delta ,\xi }-\tilde{f}\Vert _{L^{1}}$ in
our algorithm we apply the same method described in \cite{GN}. To find a stronger bound for $\Vert f_{\xi
}-f_{\xi ,\delta }\Vert _{L^{1}}$ let us start again from the estimate given
at Lemma \ref{ex4}. We will estimate independently the terms 
\begin{equation}
\Vert (\pi _{\delta }-1)f_{\xi }\Vert ,\quad \Vert N_{\xi }(\pi _{\delta
}-1)Lf_{\xi }\Vert _{L^{1}},\quad \Vert N_{\xi }\pi _{\delta }L(\pi _{\delta
}-1)f_{\xi }\Vert _{L^{1}},  \label{11}
\end{equation}%
appearing at (\ref{aposteriori_telescopic}), using the information we can
extract from the approximation $\tilde{f}$. Except in the first case (that
has the smallest weight in the estimate, according to (\ref%
{aposteriori_telescopic})), the estimate will become roughly proportional to 
$\delta ^{2}\xi ^{-1}$, greatly improving the quality of the approximation
certification. For each of the terms in \eqref{11} we prove in Sections \ref%
{estpm1f}, \ref{estpm1f2}, \ref{estpm1f3} bounds of the form 
\begin{equation*}
A||\tilde{f}-f_{\xi }||_{L^{1}}+B
\end{equation*}%
where $A$ and $B$ depend on $\delta $ and become small when $\delta $ is
small. We remark that these bounds \textbf{depend} on the error $||\tilde{f}%
-f_{\xi }||_{L^{1}}$ itself. This together with precise bounds, based on
some approximation $\tilde{f}$, permits us to tighten the bounds on the
error $||\tilde{f}-f_{\xi }||_{L^{1}}$, using a so called \textquotedblleft
bootstrapping\textquotedblright\ process.

\subsubsection{A summary of norms and estimates}

\label{summary}

Before entering in the details of the estimate of \ref{11}, we introduce
some of the norms used in the paper, and we summarize some of the bounds
that are used in this section and proved in Appendix \ref{appendix2}.

\begin{definition}
\label{dbv}Let $X\subset \lbrack 0,1]$ be a finite union of pairwise
disjoint intervals, $X=\bigcup_{j}I_{j}$, $I_{i}\cap I_{j}=\emptyset $ for $%
i\neq j$. We define the \textbf{variation on $X$} of the function $f$ (and
denote it by $\mathrm{Var}_{X}(f)$) as follows:

\begin{itemize}
\item when $X$ is an interval (the endpoints may be included or not), the
variation is defined as 
\begin{equation*}
\mathrm{Var}_X(f):=\sup_{\{x_0<x_1<\dots<x_k\}\subset X}\sum_{i=0}^{k-1}
|f(x_{i+1})-f(x_i)|
\end{equation*}
(the supremum being over finite increasing sequences of any length $k$
contained in $X$);

\item when $X$ is a finite union of pairwise disjoint intervals $X=\bigcup_j
I_j$, $I_i\cap I_j=\emptyset$ for $i\neq j$, the variation is defined as $%
\mathrm{Var}_X(f):= \sum_j \mathrm{Var}_{I_j}(f)$.
\end{itemize}
\end{definition}

\begin{remark}
As it is well known, if $f$ has a weak derivative, then $\mathrm{Var}_X(f) =
\|f^{\prime }\|_{L^1(X)}$, see for instance \cite[Chap. 4, Prop. 4.2]{SS}.
\end{remark}

We will also consider a norm which is weaker than the $L^1$ norm.

\begin{definition}
\label{def:Wasserstein} Let $f$ be a function in $L^{1}([0,1])$ with zero
average, we define the \textbf{Wasserstein-like norm} of $f$, as 
\begin{equation}
\Vert f\Vert _{W}:=\Vert F\Vert _{L^{1}},\text{ where }F(x)=\int_{0}^{x}f(t)%
\mathrm{d}t.
\end{equation}%
Let $Y\subset \lbrack 0,1]$ be a finite union of pairwise disjoint intervals 
$Y=\bigcup_{j}I_{j}$, $I_{i}\cap I_{j}=\emptyset $ for $i\neq j$, we denote
by $W(Y)$ the space of \ $L^{1}$ functions with support contained in $Y$
having zero average in each $I_{j}$. If $f\in W(Y)$ we define its norm by 
\begin{equation}
\Vert f\Vert _{W(Y)}:=\Vert f\Vert _{W}.
\end{equation}
\end{definition}

%\begin{remark}
%The Wasserstein norm was already taken into account by the authors in \cite{GMN}.
% Please note that the Wasserstein \marginpar{Penso che questo remark non sia necessario.}
%  norm stems from the restriction of the Wasserstein distance $d_W$ on the space of
%  measure to the space of average $0$ functions, i.e., if we denote by $f^+=\max(f(x),0)$ and $f^-=\max(-f(x),0)$, we %have that
%\[
%||f||_W=d_W(f^+ dx,f^- dx).
%\]
%Our definition is a expression is a simplified expression for such a distance on the
%interval, and agrees with the definition of $W_p$ in \cite{Ka} for $p=1$.
%\end{remark}

Next proposition contains a summary of the bounds used in next proofs and
the location of their proof in the paper; it can be used as an handy
guideline throughout the paper, we refer to the cited lemmas and proposition
for details.

\begin{proposition}[Summary of the bounds]
\label{summmary} Let $\pi_{\delta}$ be the Ulam projection on a homogeneous
partition of size $\delta $, as defined in (\ref{pidef}), let $I$ be a set
which is a finite union of intervals of the partition then:

\begin{enumerate}
\item $||1-\pi_{\delta}||_{\mathrm{Var} \to L^1}\leq \delta/2$, Lemma \ref%
{Lemma3},

\item $||1-\pi_{\delta}||_{L^1\to W}\leq \delta/2$, Lemma \ref{lem1mpLW},

\item $||1-\pi_{\delta}||_{\mathrm{Var}(I)\to W(I)}\leq \delta^2/8$, Lemma %
\ref{idmpVarW}. \setcounter{counter_summary}{\value{enumi}}
\end{enumerate}

Let $N_{\xi}$ be the convolution operator then:

\begin{enumerate}
 \setcounter{enumi}{\value{counter_summary}}

\item $||N_{\xi}||_{L^1\to \mathrm{Var}}\leq \xi^{-1}\mathrm{Var}(\rho)$,
Lemma \ref{chepallepero},

\item $||N_{\xi}||_{W\to L^1}\leq \xi^{-1}\mathrm{Var}(\rho)$, Lemma \ref{L1}%
,

\item $||(1-\pi_{\delta})N_{\xi}||_{L^1\to L^1}\leq \frac{1}{2}\delta
\xi^{-1}\mathrm{Var}(\rho)$, Corollary \ref{c1},

\item $||N_{\xi}(1-\pi_{\delta})||_{L^1\to L^1}\leq \frac{1}{2}\delta
\xi^{-1}\mathrm{Var}(\rho)$, Proposition \ref{prosop}. %
\setcounter{counter_summary}{\value{enumi}}
\end{enumerate}

Let $L$ be the transfer operator associated to $T$, and let $L_{i}g$ the
component of $Lg$ coming from the $i-th$ branch as defined at (\ref{Li}),
then:

\begin{enumerate}
\setcounter{enumi}{\value{counter_summary}}

\item $||L||_{W(I)\to W}\leq ||T^{\prime }||_{L^{\infty}(I)}$, Lemma \ref%
{LemmaW}

\item in Lemma \ref{varLf} the \textbf{local variation inequality} is
proved: 
\begin{align*}
\mathrm{Var}_{I}(L_{i}g)\leq& \mathrm{Var}_{T_{i}^{-1}(I)}(g)\cdot
\left\Vert \frac{1}{T^{\prime }}\right\Vert _{L^{\infty
}(T_{i}^{-1}(I))}+\Vert g\Vert_{L^{1}(T_{i}^{-1}(I))}\cdot \left\Vert \frac{%
T^{\prime\prime}}{T^{\prime 2}}\right\Vert _{L^{\infty }(T_{i}^{-1}(I))} \\
&+\sum_{y\in \partial Dom(T_i):T(y)\in I}\left|\frac{g(y)}{T^{\prime }(y)}%
\right|.
\end{align*}
\end{enumerate}
\end{proposition}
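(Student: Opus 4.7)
The proposition consolidates nine operator-norm estimates, each to be proved in a dedicated lemma. I would organize the work into three natural groups according to the operator involved: projection bounds (items 1--3), convolution bounds (items 4--7), and transfer-operator bounds (items 8--9).

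For items 1--3 I would exploit that $(1-\pi_\delta)g$ has mean zero on every partition interval $J$ of length $\delta$. Item 1 reduces to the elementary fact that a zero-mean BV function $h$ on $J$ satisfies $\|h\|_{L^1(J)} \le (\delta/2)\,\mathrm{Var}_J(h)$: pick an interior zero $x_0$ of $h$, bound $|h(x)|$ by the variation from $x_0$ to $x$, and integrate; then sum over $J$. Item 2 uses that the antiderivative $F$ of $(1-\pi_\delta)g$ vanishes at every endpoint of the partition, so on each $J$ one can bound $|F(x)|$ by the $L^1$ mass of $g-\bar g$ reached from the nearer endpoint and integrate in $x$ to produce the $\delta/2$ factor. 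Item 3 is the sharpest: $F$ vanishes at both endpoints of $J$ and the pointwise estimate $|g-\bar g|\le \mathrm{Var}_J(g)/2$ available for BV $g$ pushed through the antiderivative gives the $\delta^2/8$ constant.

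The convolution bounds 4--7 all rest on $\mathrm{Var}(\rho_\xi)=\xi^{-1}\mathrm{Var}(\rho)$. Item 4 is Young's inequality $\mathrm{Var}(\rho_\xi\ast g) \le \|\rho_\xi'\|_{L^1}\|g\|_{L^1}$, with the reflected convolution $\hat\ast$ handled by the observation that the pushforward $\pi^*$ of Definition \ref{def:reflectingboundary} does not increase variation. Item 5 is dual: for a zero-average $g$ with antiderivative $G$, an integration by parts recasts $N_\xi g$ essentially as $\rho_\xi'\hat\ast G$, yielding $\|N_\xi g\|_{L^1}\le \xi^{-1}\mathrm{Var}(\rho)\|g\|_W$. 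Items 6 and 7 then follow by composition: item 6 is item 1 composed with item 4, item 7 is item 5 composed with item 2.

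For the transfer-operator bounds I would work branch-by-branch under Setting \ref{set2}. Item 8 follows because on each monotone $C^2$ branch the change of variable $y = T_i^{-1}x$ shows that the antiderivative of $L_i g$ equals the antiderivative $G$ of $g$ composed with $T_i^{-1}$; pulling back via $dx = T'(y)\,dy$ bounds the resulting $L^1$ norm by $\|T'\|_{L^\infty(I)}\|G\|_{L^1(I)}$, and summing over branches gives the claim. Item 9, the local variation inequality, is obtained by differentiating $L_i g(x) = g(T_i^{-1}x)/T'(T_i^{-1}x)$ via the quotient rule, integrating $|(L_i g)'|$ over $I$, and changing variables back to $T_i^{-1}(I)$; the two bulk terms of the asserted bound match the two terms of the quotient derivative, while the sum over $y\in\partial\mathrm{Dom}(T_i)$ with $T(y)\in I$ accounts for the jumps of $L_i g$ at images of the endpoints of $T_i$'s domain that are not captured by the pointwise derivative. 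The most delicate step is item 9: Setting \ref{set2} permits $T'$ to blow up at $\partial P_i$, so one must verify that the boundary sum is well-defined and correctly enumerates the contributing endpoints. The convolution bounds also deserve care because the reflecting-boundary convention must be tracked through each estimate, though the piecewise-linear structure of $\pi$ keeps this routine.
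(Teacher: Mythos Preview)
Your outline for items 4--9 is essentially correct and matches the paper's proofs. The genuine gap is in item 3, and items 1--2 are not sharp as you have written them.

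For item 3, the pointwise estimate $|g-\bar g|\le \tfrac{1}{2}\mathrm{Var}_J(g)$ is \emph{false}. Take $J=[0,1]$ and $g=\chi_{[0,\epsilon]}$: then $\bar g=\epsilon$, $\mathrm{Var}_J(g)=1$, but $|g(0)-\bar g|=1-\epsilon>\tfrac12$ for small $\epsilon$. The correct pointwise bound is $|g-\bar g|\le \mathrm{Var}_J(g)$, and pushing that through your antiderivative argument gives only $\delta^2/4$, not $\delta^2/8$. The paper's Lemma \ref{idmpVarW} gets the sharp $\delta^2/8$ by a different mechanism: writing $u=\sup(g-\bar g)$ and using that $u-v\le g-\bar g\le u$ where $v=\mathrm{Var}_J(g)$, the antiderivative $F$ (which vanishes at both endpoints) is trapped under the triangle formed by the lines of slope $u$ from the left and slope $u-v$ from the right; the area of that triangle is $\delta^2 u(v-u)/(2v)$, maximized at $u=v/2$ to give $v\delta^2/8$.

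For item 1, ``pick an interior zero $x_0$ and bound $|h(x)|$ by the variation from $x_0$ to $x$, then integrate'' yields only $\|h\|_{L^1(J)}\le \delta\,\mathrm{Var}_J(h)$, not $\delta/2$: the variation accumulated from $x_0$ to the far endpoint can be the full $\mathrm{Var}_J(h)$ and is integrated over an interval of length up to $\delta$. The paper's Lemma \ref{Lemma3} instead reduces to two-piece step functions and optimizes, which is what produces the extra factor $\tfrac12$. Similarly for item 2, bounding $|F(x)|$ by the $L^1$ mass of $g-\bar g$ from the nearer endpoint and integrating does not give $\delta/2$ (note $\int_J|g-\bar g|$ can be as large as $2\|g\|_{L^1(J)}$); the paper's Lemma \ref{lem1mpLW} computes the $W$-norm exactly for Dirac masses and extends by weak-$*$ density.
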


\subsubsection{Estimate for $\|(\protect\pi_\protect\delta-1)f_\protect\xi%
\|_{L^1}$}

\label{estpm1f}

We give now an estimate for the first item of (\ref{11}).

\begin{lemma}
Let $\pi_{\delta}$ be the Ulam projection on a homogeneous partition of size 
$\delta$, then 
\begin{equation}  \label{ab1}
\Vert(\pi _{\delta }-1)f_{\xi}\Vert_{L^1} \leq A_1 \cdot \Vert f_{\xi}-%
\tilde{f}\Vert_{L^1} + B_1
\end{equation}
for 
\begin{equation}  \label{ab1def}
A_1 = \frac{\delta}{2}\xi^{-1}\mathrm{Var}(\rho),\qquad B_1 = \frac{\delta}{2%
} \cdot \mathrm{Var}(N_\xi L\tilde{f}).
\end{equation}
\end{lemma}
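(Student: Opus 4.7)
The plan is to exploit invariance of $f_\xi$ to gain one unit of regularity, then use the estimate from the summary that gives $\|1-\pi_\delta\|_{\mathrm{Var}\to L^1}\le \delta/2$, and finally replace the unknown $f_\xi$ inside a variation by the known $\tilde f$, paying only an $L^1$ error.

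More concretely, since $L_\xi f_\xi = f_\xi$ we can write $f_\xi = N_\xi L f_\xi$, so
\[
\|(\pi_\delta-1)f_\xi\|_{L^1} = \|(\pi_\delta-1)N_\xi L f_\xi\|_{L^1} \le \tfrac{\delta}{2}\,\mathrm{Var}(N_\xi L f_\xi),
\]
using item~1 of Proposition~\ref{summmary}; this is the crucial step, because the convolution operator $N_\xi$ brings us from $L^1$ into $\mathrm{BV}$. Next I would split
\[
\mathrm{Var}(N_\xi L f_\xi) \le \mathrm{Var}(N_\xi L \tilde f) + \mathrm{Var}\bigl(N_\xi L (f_\xi - \tilde f)\bigr),
\]
so that the first term yields the additive constant $B_1$ and the second term produces the coefficient in front of $\|f_\xi-\tilde f\|_{L^1}$.

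To bound the second term, I would use item~4 of Proposition~\ref{summmary}, namely $\|N_\xi\|_{L^1\to \mathrm{Var}} \le \xi^{-1}\mathrm{Var}(\rho)$, together with $\|L\|_{L^1\to L^1}\le 1$, to obtain
\[
\mathrm{Var}\bigl(N_\xi L (f_\xi - \tilde f)\bigr) \le \xi^{-1}\mathrm{Var}(\rho)\,\|L(f_\xi-\tilde f)\|_{L^1} \le \xi^{-1}\mathrm{Var}(\rho)\,\|f_\xi-\tilde f\|_{L^1}.
\]
Combining the three displays gives exactly
\[
\|(\pi_\delta-1)f_\xi\|_{L^1} \le \tfrac{\delta}{2}\,\xi^{-1}\mathrm{Var}(\rho)\,\|f_\xi-\tilde f\|_{L^1} + \tfrac{\delta}{2}\,\mathrm{Var}(N_\xi L\tilde f),
\]
which is the stated bound with $A_1$ and $B_1$ as defined in \eqref{ab1def}.

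There is essentially no hard step here once the summary of norm inequalities has been accepted; the only subtlety is the conceptual one of recognising that invariance $f_\xi = N_\xi L f_\xi$ must be used to convert an $L^1$ quantity into a variation before $(\pi_\delta-1)$ can be controlled by $\delta/2$, and that the unknown $f_\xi$ inside the variation should be replaced by the computable $\tilde f$ at the cost of an extra $L^1$ term—this is exactly the bootstrap structure announced in Section~\ref{post}.
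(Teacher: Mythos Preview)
Your proof is correct and essentially the same as the paper's: both use invariance $f_\xi=N_\xi L f_\xi$, item~1 ($\|1-\pi_\delta\|_{\mathrm{Var}\to L^1}\le\delta/2$), and item~4 ($\|N_\xi\|_{L^1\to\mathrm{Var}}\le\xi^{-1}\mathrm{Var}(\rho)$), with the only cosmetic difference that the paper splits $f_\xi=(f_\xi-\tilde f)+\tilde f$ inside the $L^1$ norm and invokes the packaged bound (item~6) on the first piece, whereas you first pass to $\mathrm{Var}(N_\xi L f_\xi)$ and split there.
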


\begin{proof}
We estimate: 
\begin{align*}
\Vert(\pi _{\delta }-1)f_{\xi}\Vert_{L^1} &= \Vert(\pi _{\delta }-1)N_\xi L
f_{\xi}\Vert_{L^1} \\
\leq &\ \Vert(\pi _{\delta }-1)N_\xi L (f_{\xi}-\tilde{f})\Vert_{L^1} +
\Vert(\pi _{\delta }-1)N_\xi L \tilde{f}\Vert_{L^1} \\
\leq &\ \Vert(\pi _{\delta }-1)N_\xi\Vert_{L^1}\cdot \Vert f_{\xi}-\tilde{f}%
\Vert_{L^1} + \Vert\pi _{\delta }-1\Vert_{\Var\rightarrow L^1} \cdot \Var%
(N_\xi L\tilde{f}) \\
\leq &\ \frac{\delta}{2}\xi^{-1}\Var(\rho) \cdot \Vert f_{\xi}-\tilde{f}%
\Vert_{L^1} + \frac{\delta}{2} \cdot \Var(N_\xi L\tilde{f})
\end{align*}
where in the last line we used Proposition \ref{summmary}, items 6) and 1).
\end{proof}

An upper bound on $\mathrm{Var}(N_\xi L\tilde{f})$ will be estimated using
the results in Section \ref{varNLf} and the explicit knowledge of the
computed $\tilde{f}$.

\subsubsection{Estimate for $||N_{\protect\xi }(1-\protect\pi _{\protect%
\delta })Lf_{\protect\xi }||_{L^1}$}

\label{estpm1f2} We give now the estimate of the second item of (\ref{11}).
The main idea is to use a coarser partition $\Pi$ made of intervals whose
size is an integral multiple of $\delta$. Then, instead of estimating $%
||N_{\xi }(1-\pi _{\delta })Lf_{\xi }||_{L^1}$ globally we bound this
quantity on each interval of $\Pi $ exploiting the approximate knowledge of
the variation of $f_{\xi }$ in the interval. This drastically improves the
quality of the estimate in almost every interval of $\Pi$ and allows the
error-checking computation to be performed on a partition which is coarser
than the initial partition of size $\delta$. 
%The estimate relies on bounds for the operator norms whose
%proofs can be found in Section \ref{ap

\begin{lemma}
\label{lemma_N1pLf} Let $\Pi$ be a uniform partition whose parts have size
that is an integral multiple of $\delta$, $T$ piecewise monotonic with $L_i$
defined as above. We have 
\begin{equation}  \label{ab2}
\Vert N_{\xi }(1-\pi _{\delta })Lf_{\xi }\Vert _{L^{1}}\leq A_2 \cdot \Vert
f_{\xi }-\tilde{f}\Vert _{L^{1}} + B_2,
\end{equation}
with 
\begin{align}  \label{ab2def}
A_2 &= \frac{\delta }{2}\mathrm{Var}(\rho _{\xi })  \notag \\
B_2 &= \frac{\delta }{2}\mathrm{Var}(\rho _{\xi })\sum_{I\in \Pi
}\sum_{i}\min \left\{ \frac{\delta }{4}\cdot \mathrm{Var}_{I}(L_{i}\tilde{f}%
),\Vert L_{i}{\tilde{f}}\Vert _{L^{1}(I)}\right\}.
\end{align}
\end{lemma}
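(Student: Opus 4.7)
The plan is to split the target quantity via the triangle inequality:
$$\Vert N_\xi (1-\pi_\delta) L f_\xi\Vert_{L^1} \leq \Vert N_\xi (1-\pi_\delta) L (f_\xi - \tilde f)\Vert_{L^1} + \Vert N_\xi (1-\pi_\delta) L \tilde f\Vert_{L^1}.$$
The first summand yields the $A_2$ term immediately by Proposition \ref{summmary}(7), which gives $\Vert N_\xi(1-\pi_\delta)\Vert_{L^1\to L^1}\leq \tfrac{\delta}{2}\mathrm{Var}(\rho_\xi)$, combined with $\Vert L\Vert_{L^1\to L^1}\leq 1$. The second summand is what produces $B_2$, and handling it is the substantive content of the lemma.

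For that second term, the key observation I would exploit is that each coarse interval $I\in\Pi$ is by construction a union of fine Ulam cells, so the multiplication operator $1_I$ commutes with $\pi_\delta$. Decomposing $L\tilde f = \sum_i L_i\tilde f$ and then localizing to coarse intervals gives
$$N_\xi (1-\pi_\delta) L \tilde f = \sum_i \sum_{I\in\Pi} N_\xi (1-\pi_\delta)(1_I L_i \tilde f),$$
so by the triangle inequality it suffices to bound each piece by $\tfrac{\delta}{2}\mathrm{Var}(\rho_\xi)\cdot \min\{\tfrac{\delta}{4}\mathrm{Var}_I(L_i\tilde f),\,\Vert L_i\tilde f\Vert_{L^1(I)}\}$. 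The crude half of the minimum comes directly from Proposition \ref{summmary}(7) applied to $1_I L_i\tilde f$. For the sharp half, I would note that $(1-\pi_\delta)(1_I L_i\tilde f)$ is supported in $I$ and has zero mean on each fine sub-cell of $I$, hence lies in $W(I)$; combining Proposition \ref{summmary}(5), giving $\Vert N_\xi\Vert_{W\to L^1}\leq \mathrm{Var}(\rho_\xi)$, with Proposition \ref{summmary}(3), giving $\Vert 1-\pi_\delta\Vert_{\mathrm{Var}(I)\to W(I)}\leq \delta^2/8$, produces
$$\Vert N_\xi(1-\pi_\delta)(1_I L_i\tilde f)\Vert_{L^1} \leq \mathrm{Var}(\rho_\xi)\cdot \tfrac{\delta^2}{8}\mathrm{Var}_I(L_i\tilde f) = \tfrac{\delta}{2}\mathrm{Var}(\rho_\xi)\cdot \tfrac{\delta}{4}\mathrm{Var}_I(L_i\tilde f),$$
exactly as required. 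Summing the minimum over $(i,I)$ recovers $B_2$.

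The main obstacle is ensuring that the sharp bound is legitimately applied: this hinges on the commutation $1_I\pi_\delta = \pi_\delta 1_I$, which is precisely why $\Pi$ is taken to be a refinement-compatible partition, so that $(1-\pi_\delta)(1_I L_i\tilde f)$ genuinely sits in $W(I)$. A small auxiliary point is the identity $\mathrm{Var}_I(1_I L_i\tilde f) = \mathrm{Var}_I(L_i\tilde f)$, which holds because the variation is measured strictly inside $I$ and the boundary jumps of the indicator are invisible. Once these two points are settled, the remainder of the argument is routine bookkeeping with the bounds collected in Proposition \ref{summmary}.
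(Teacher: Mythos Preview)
Your proposal is correct and follows essentially the same approach as the paper: the triangle-inequality split giving $A_2$ via $\Vert N_\xi(1-\pi_\delta)\Vert_{L^1\to L^1}$, followed by the decomposition over branches $i$ and coarse intervals $I\in\Pi$, with each piece bounded by the minimum of the $\mathrm{Var}\to W$ estimate (items 5 and 3) and the $L^1\to W$ estimate. The only cosmetic difference is that the paper factors out $\Vert N_\xi\Vert_{W\to L^1}$ first and then bounds both halves of the minimum in the $W(I)$-norm (using item~2 for the crude half), whereas you invoke item~7 directly for the crude half; since item~7 is itself proved from items~5 and~2, the two routes coincide.
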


\begin{proof}
We can estimate as 
\begin{align}
\Vert N_{\xi }(1-\pi _{\delta })Lf_{\xi }\Vert _{L^{1}} &\ \leq \Vert N_{\xi
}(1-\pi _{\delta })L\tilde{f}\Vert _{L^{1}}+\Vert N_{\xi }(1-\pi _{\delta
})L(f_{\xi }-\tilde{f})\Vert _{L^{1}}  \notag \\
\leq &\ \Vert N_{\xi }(1-\pi _{\delta })L\tilde{f}\Vert _{L^{1}} +\Vert
N_{\xi }(1-\pi _{\delta })\Vert _{L^{1}}\cdot \Vert f_{\xi }-\tilde{f}\Vert
_{L^{1}}  \notag \\
\leq &\ \Vert N_{\xi }(1-\pi _{\delta })L\tilde{f}\Vert _{L^{1}} + \frac{%
\delta }{2}\Var(\rho _{\xi }) \cdot \Vert f_{\xi }-\tilde{f}\Vert _{L^{1}}
\label{deltaq_est1}
\end{align}
The first summand can be rewritten as 
\begin{align*}
\Vert N_{\xi }(1-\pi _{\delta })L{\tilde{f}}\Vert _{L^{1}} &\leq\Vert N_{\xi
}\Vert _{W\rightarrow L^{1}}\cdot \Vert (1-\pi _{\delta })L\tilde{f}\Vert_{W}
\\
&\leq \Vert N_{\xi }\Vert _{W\rightarrow L^{1}}\cdot \sum_{I\in\Pi}\Vert
(1-\pi _{\delta })L\tilde{f}\cdot \chi_I\Vert _{W(I)}
\end{align*}
splitting on the intervals of the partition, and using Lemma \ref{varLf}
(because each $I\in\Pi$ is a union of intervals of the partition of size $%
\delta$) 
\begin{align*}
\leq \Vert N_{\xi }\Vert _{W\rightarrow L^{1}}\cdot &\sum_{I\in \Pi
}\sum_{i}\min \bigg\{\Vert 1-\pi _{\delta }\Vert _{\Var_I\rightarrow
W(I)}\cdot \Var_{I}(L_{i}\tilde{f}), \\
& \Vert 1-\pi _{\delta }\Vert _{L^{1}(I)\rightarrow W(I)}\cdot \Vert L_{i}%
\tilde{f}\Vert _{L^{1}(I)}\bigg\} \\
\leq \Var(\rho _{\xi })\cdot& \sum_{I\in \Pi }\sum_{i}\min \left\{ \frac{%
\delta ^{2}}{8}\cdot \Var_{I}(L_{i}\tilde{f}),\frac{\delta }{2}\cdot \Vert
L_{i}{\tilde{f}}\Vert _{L^{1}(I)}\right\} .
\end{align*}
In the last step we used the property of the $W$ norm stated in Lemma \ref%
{lem1mpLW} and \ref{idmpVarW}. Adding the second term of \eqref{deltaq_est1}
we have proved the result.
\end{proof}

Therefore, once we have an approximation $\tilde{f}$ we can compute $A_2$, $%
B_2$ by a simple algorithm that evaluates the double summation in the last
equation of the above Lemma.

\begin{comment}
\begin{algorithm}[H]
\caption{\emph{ComputeL1ErrorConstantsSub2},
returns $A_2$, $B_2$.}
\label{alg_N1pLf}
\begin{algorithmic}
\REQUIRE $\delta$ \COMMENT{the partition size of $\pi_\delta$}, \\
   $\Var(\rho_\xi)$ \COMMENT{the variation of the noise kernel $\rho_\xi$},\\
   $varLiF[I,i]$ \COMMENT{bound of $\Var_I(L_i\tilde{f})$ in each
   $I\in\Pi$ and for the $i$-th branch}, \\
   $L1LiF[I,i]$ \COMMENT{bound of $\|L_i\tilde{f}\|_{L^1(I)}$ in each
   $I\in\Pi$ and for the $i$-th branch}.
\ENSURE $A_2$, $B_2$.
\algrule
\STATE $B_2 \gets 0$
 \FORALL{$I\in{}\Pi$}
   \FORALL{$i\in{}Branches(T)$}
     \STATE $B_2 \gets B_2 + \min(\frac{\delta^2}{8}\cdot varLiF[I,i],
      \frac{\delta}{2}\cdot L1LiF[I,i])\cdot \Var(\rho_\xi)$
   \ENDFOR
 \ENDFOR
\STATE $A_2 \gets \frac{\delta}{2}\Var(\rho_\xi)$
\RETURN $A_2$, $B_2$
\end{algorithmic}
\end{algorithm}
\end{comment}

\begin{remark}
When computing $B_2$, the minimum could be always the one depending on $%
\Vert L_{i}{\tilde{f}}\Vert _{L^{1}(I)}$. If this happens, the sum adds up
to $1$, and the new bound is worse than the \emph{a priori} estimate, since
we are introducing a factor $\Vert f_{\xi }-\tilde{f} \Vert _{L^{1}}+1>1$.

In practice, this does not happen. On each $i$-th preimage of an
interval $I$ of the partition the new estimate will provide a better bound
as soon as 
\begin{equation*}
\frac{\delta }{4}\cdot \mathrm{Var}_{I}(L_{i}\tilde{f})<\Vert L_{i}\tilde{f}%
\Vert _{L^{1}(I)}.
\end{equation*}

First of all, the variation of $\tilde{f}$ has an a priori bound by 
\begin{equation*}
\mathrm{Var}(\tilde{f}) = \mathrm{Var}(\pi_\delta N_\xi \pi_\delta
L\pi_\delta \tilde{f}) \leq \|\pi_\delta\|_{\mathrm{Var}} \cdot
\|N_\xi\|_{L^1\rightarrow\mathrm{Var}}\cdot 1 \leq \xi^{-1}\mathrm{Var}(\rho).
\end{equation*}

Now, we can try to control $\mathrm{Var}_{I}(L_{i}\tilde{f})$  by using the local variation inequality in Proposition \ref{summmary}. If the preimage $T_i^{-1}(I)$ does not contain a critical point or a singular point, we expect $(\delta^2/8) \cdot \mathrm{Var}_{I}(L_{i}\tilde{f})$ to be small.

For the intervals $I$ where we cannot apply the local variation inequality or where it does not give us good enough bounds we  fall back to the a-priori estimate depending on the $L^{1}$ mass rather
than on the variation.
\end{remark}

\subsubsection{An estimate for $||N_{\protect\xi }\protect\pi _{\protect%
\delta }L(\protect\pi _{\protect\delta }-1)f_{\protect\xi }||_1$}

\label{estpm1f3} We give an estimate of the third item of (\ref{11}). The
general idea is similar to the one explained in the previous section, again,
some required estimates are technical lemmas proved in Section \ref{appendix}%
.

\begin{lemma}
\label{lemma_NpL1pf} Let $\Pi$ be a uniform partition whose parts have size
that is multiple of $\delta$, we have: 
\begin{equation}  \label{ab3}
\Vert N_{\xi }\pi _{\delta }L(1-\pi _{\delta })f_{\xi }\Vert _{L^{1}} \leq
A_3 \cdot \Vert f_{\xi }-\tilde{f}\Vert _{L^{1}} + B_3,
\end{equation}
with 
\begin{align}
A_3 =& \frac{\delta }{2}\xi^{-1}\mathrm{Var}(\rho),  \notag \\
B_3 =& \sum_{I\in \Pi }\min \left\{ \frac{\delta ^{2}}{8}\xi^{-1}\mathrm{Var}%
(\rho )\cdot \Vert T^{\prime }\Vert _{L^{\infty }(I)},\frac{\delta }{2}%
\right\} \mathrm{Var}_{I}(N_{\xi }L\tilde{f})  \label{ab3def} \\
&+\frac{\delta ^{2}}{4}\xi^{-1}\mathrm{Var}(\rho)\cdot \mathrm{Var}(N_{\xi }L%
\tilde{f}).  \notag
\end{align}
\end{lemma}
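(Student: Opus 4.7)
The plan is to mirror the structure of Lemma \ref{lemma_N1pLf}, but with one extra manipulation to absorb the inconvenient middle $\pi_\delta$ inside $N_\xi\pi_\delta L(1-\pi_\delta)$. The first step is to exploit that $f_\xi$ is a fixed point and write $(1-\pi_\delta)f_\xi = (1-\pi_\delta)N_\xi L f_\xi$, then split $f_\xi = \tilde f + (f_\xi - \tilde f)$ so that
\[
N_\xi \pi_\delta L (1-\pi_\delta) f_\xi = N_\xi \pi_\delta L (1-\pi_\delta) N_\xi L \tilde f \;+\; N_\xi \pi_\delta L (1-\pi_\delta) N_\xi L (f_\xi - \tilde f).
\]
For the error term, I would chain operator norms using $\|N_\xi\|_{L^1\to L^1}\le 1$, $\|\pi_\delta\|_{L^1\to L^1}\le 1$, $\|L\|_{L^1\to L^1}\le 1$ together with item 6 of Proposition \ref{summmary} ($\|(1-\pi_\delta)N_\xi\|_{L^1\to L^1}\le \tfrac{\delta}{2}\xi^{-1}\mathrm{Var}(\rho)$). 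This produces exactly the $A_3 \,\|f_\xi-\tilde f\|_{L^1}$ contribution.

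For the main term, set $g := N_\xi L\tilde f$ and use the identity $N_\xi \pi_\delta = N_\xi - N_\xi(1-\pi_\delta)$ to write
\[
N_\xi \pi_\delta L (1-\pi_\delta) g \;=\; N_\xi L (1-\pi_\delta) g \;-\; N_\xi(1-\pi_\delta) L (1-\pi_\delta) g.
\]
The second piece is easy: apply $\|N_\xi(1-\pi_\delta)\|_{L^1\to L^1}\le \tfrac{\delta}{2}\xi^{-1}\mathrm{Var}(\rho)$ (item 7), then $\|L\|_{L^1}\le 1$, then $\|(1-\pi_\delta)\|_{\mathrm{Var}\to L^1}\le \tfrac{\delta}{2}$ (item 1), producing exactly the global term $\tfrac{\delta^2}{4}\xi^{-1}\mathrm{Var}(\rho)\cdot \mathrm{Var}(N_\xi L\tilde f)$.

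For the first piece I would decompose along the coarse partition $\Pi$, exploiting that each $I\in\Pi$ is a union of $\delta$-cells so that $(1-\pi_\delta)g\cdot\chi_I = (1-\pi_\delta)(g\chi_I) \in W(I)$ (it already has zero mean on each subcell, hence on $I$). On each piece I would take the minimum of two estimates:
\begin{enumerate}
\item Through the $W$-norm chain: $\|N_\xi\|_{W\to L^1}\cdot \|L\|_{W(I)\to W}\cdot\|1-\pi_\delta\|_{\mathrm{Var}(I)\to W(I)}$, which by items 5, 8, 3 gives $\xi^{-1}\mathrm{Var}(\rho)\cdot\|T'\|_{L^\infty(I)}\cdot \tfrac{\delta^2}{8}$ times $\mathrm{Var}_I(g)$;
\item A direct $L^1$ bound: $\|N_\xi\|_{L^1}\|L\|_{L^1}\|1-\pi_\delta\|_{\mathrm{Var}(I)\to L^1(I)}\le \tfrac{\delta}{2}$ times $\mathrm{Var}_I(g)$.
\end{enumerate}
Summing over $I\in\Pi$ yields the remaining term of $B_3$. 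Adding these three contributions gives the stated bound.

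The only real obstacle is bookkeeping rather than mathematics: one must verify that the two-bound minimum is legitimate (both are valid because $(1-\pi_\delta)(g\chi_I)$ belongs simultaneously to $W(I)$ and admits the variation bound locally), and take care that $L$ applied to functions supported on $I$ is controlled by $\|T'\|_{L^\infty(I)}$ only because of the local Lemma \ref{LemmaW} from Setting \ref{set2}. Once these points are verified, the estimate follows directly from the summary of norm bounds in Proposition \ref{summmary}.
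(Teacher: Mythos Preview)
Your proposal is correct and follows essentially the same route as the paper's own proof: the same fixed-point substitution $(1-\pi_\delta)f_\xi=(1-\pi_\delta)N_\xi L f_\xi$, the same splitting $f_\xi=\tilde f+(f_\xi-\tilde f)$, the same identity $N_\xi\pi_\delta=N_\xi-N_\xi(1-\pi_\delta)$ to absorb the middle projection, and the same interval-by-interval minimum of the $W$-chain and the direct $\mathrm{Var}\to L^1$ bound. The only cosmetic difference is that the paper groups $\|N_\xi\pi_\delta L\|_{L^1}\le 1$ as a single factor rather than splitting it into three, but the estimates and the resulting constants are identical.
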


\begin{proof}
We have 
\begin{align}
\Vert & N_{\xi }\pi _{\delta }L(1-\pi _{\delta })f_{\xi }\Vert
_{L^{1}}=\Vert N_{\xi }\pi _{\delta }L(1-\pi _{\delta })N_{\xi }Lf_{\xi
}\Vert _{L^{1}}  \notag \\
& \leq \Vert N_{\xi }\pi _{\delta }L(1-\pi _{\delta })N_{\xi }L\tilde{f}%
\Vert _{L^{1}}+\Vert N_{\xi }\pi _{\delta }L(1-\pi _{\delta })N_{\xi
}L(f_{\xi }-\tilde{f})\Vert _{L^{1}}  \notag \\
& \leq \Vert N_{\xi }L(1-\pi _{\delta })N_{\xi }L\tilde{f}\Vert _{L^{1}} 
\notag \\
& \qquad +\Vert N_{\xi }(1-\pi _{\delta })L(1-\pi _{\delta })N_{\xi }L\tilde{%
f}\Vert _{L^{1}}  \notag \\
& \qquad +\Vert N_{\xi}\pi _{\delta }L\Vert _{L^{1}}\cdot \Vert (1-\pi
_{\delta })N_{\xi }\Vert _{L^{1}}\cdot \Vert f_{\xi }-\tilde{f}\Vert _{L^{1}}
\notag \\
& \leq \Vert N_{\xi }L(1-\pi _{\delta })N_{\xi }L\tilde{f}\Vert _{L^{1}} 
\notag \\
& \qquad +\Vert N_{\xi }(1-\pi _{\delta })\Vert _{L^{1}}\cdot \Vert 1-\pi
_{\delta }\Vert _{\Var\rightarrow L^{1}}\cdot \Var(N_{\xi }L\tilde{f}) 
\notag \\
& \qquad +\Vert (1-\pi _{\delta })N_{\xi }\Vert _{L^{1}}\cdot \Vert f_{\xi }-%
\tilde{f}\Vert _{L^{1}}  \notag \\
& \leq \Vert N_{\xi }L(1-\pi _{\delta })N_{\xi }L\tilde{f}\Vert_{L^{1}} \\
&\qquad +\frac{\delta ^{2}}{4}\Var(\rho _{\xi })\cdot \Var(N_{\xi }L\tilde{f}%
) \\
&\qquad +\frac{\delta }{2}\Var(\rho _{\xi })\cdot \Vert f_{\xi }-\tilde{f}%
\Vert _{L^{1}}.  \label{deltaq_est2}
\end{align}%
An algorithm for estimating for $\Var(N_{\xi }L\tilde{f})$ can be found in
Lemma \ref{Lemma6}. The term $\Vert N_{\xi }L(1-\pi _{\delta })N_{\xi }L%
\tilde{f}\Vert _{L^{1}}$ can be split over the intervals $I\in\Pi$ and
estimated as 
\begin{align*}
&\sum_{I\in\Pi}\Vert N_{\xi}L(1-\pi _{\delta })[N_{\xi }L\tilde{f}%
\cdot\chi_I]\Vert_{L^{1}} \\
&\qquad \leq \sum_{I\in\Pi}\min\bigg\{ \Vert N_{\xi}\Vert_{W\rightarrow
L^1}\cdot \Vert L\Vert_{W(I)\rightarrow W}\cdot \Vert 1-\pi _{\delta }\Vert_{%
\Var_I\rightarrow W(I)}, \\
&\qquad\qquad \Vert 1-\pi _{\delta }\Vert_{\Var_I\rightarrow L^1} \bigg\}%
\cdot\Var_I(N_{\xi}L\tilde{f}) \\
&\qquad \leq \sum_{I\in \Pi }\min \left\{ \frac{\delta ^{2}}{8}\Var(\rho
_{\xi })\cdot \Vert T^{\prime }\Vert _{L^{\infty}(I)},\frac{\delta }{2}%
\right\} \Var_{I}(N_{\xi }L\tilde{f})
\end{align*}
proving the statement thanks to Lemma \ref{LemmaW} (because each $I$ is a
union of intervals of the partition of size $\delta$).
\end{proof}

\begin{comment}
This justifies the following algorithm for the computation of $A_3$, $B_3$.

\begin{algorithm}[H]
\caption{\emph{ComputeL1ErrorConstantsSub3},
returns $A_3$, $B_3$.}
\label{alg_NpL1pf}
\begin{algorithmic}
\REQUIRE $\delta$ \COMMENT{the partition size of $\pi_\delta$}, \\
   $\Var(\rho_\xi)$ \COMMENT{the variation of the noise kernel $\rho_\xi$},\\
   $varNxiLF[I]$ \COMMENT{bound of $\Var_I(N_\xi L\tilde{f})$ in each
   $I\in\Pi$}, \\
   $varNxiLF$ \COMMENT{bound of $\Var(N_\xi L\tilde{f})$, can be $Sum(varNxiLF[I], I\in\Pi)$}, \\
   $Tprime[I]$ \COMMENT{bound of $|T'|$ in each $I\in\Pi$}.
\algrule
\STATE $B_3 \gets \frac{\delta^2}{4}\Var(\rho_\xi) \cdot varNxiLF$
 \FORALL{$I\in{}\Pi$}
   \STATE $B_3 \gets B_3 + \min(\frac{\delta^2}{8}\Var(\rho_\xi)\cdot Tprime[I],
   \frac{\delta}{2})\cdot varNxiLF[I]$
 \ENDFORa bo
\STATE $A_3 \gets \frac{\delta}{2}\Var(\rho_\xi)$
\RETURN $A_3$, $B_3$
\end{algorithmic}
\end{algorithm}
\end{comment}

\begin{remark}
To estimate $B_3$, we estimate computationally $\mathrm{Var}_{I}(N_{\xi }L%
\tilde{f})$ for each interval $I\in\Pi$ using the algorithms explained in
Sections \ref{varLg} and \ref{varNLf}. As in Lemma \ref{lemma_N1pLf}, we
obtain a stronger estimate in the interval $I$ as soon as 
\begin{equation*}
\Vert T^{\prime }\Vert _{L^{\infty }(I)}<\frac{4}{\delta\xi^{-1} \mathrm{Var}%
(\rho)}.
\end{equation*}
Remark that in all our computations ${\delta\xi^{-1} \mathrm{Var}(\rho)}$ needs to be small, since it controls the approximation error (refer to Proposition \ref{summmary}, items 6 and 7). This implies that the inequality above will be true for most of the intervals but those where $T^{\prime}$ becomes very big.
\end{remark}

\subsubsection{An estimate for the $L^1$ error $\|f_\protect\xi -\tilde{f}%
\|_{L^1}$}

In the previous sections we built the ingredients for estimating $\Vert
f_{\xi }-f_{\xi ,\delta }\Vert _{L^{1}}$, but we want to estimate $\Vert
f_{\xi }-\tilde{f}\Vert _{L^{1}}$ where $\tilde{f}$ is the output of a
computation approximating the fixed point of $L_{\delta ,\xi }$. We will do
so assuming that we have an estimate of the numerical error $\Vert f_{\xi
,\delta }-\tilde{f}\Vert _{L^{1}}$ (such an estimate can be found in \cite%
{GN}).

Let $A_{i}$, $B_{i}$ ($i=1,2,3$) be the constants defined as in %
\eqref{ab1def}, \eqref{ab2def}, \eqref{ab3def}. Plugging \eqref{ab1}, %
\eqref{ab2}, \eqref{ab3} (according to Lemmas \ref{lemma_N1pLf} and \ref%
{lemma_NpL1pf}) into \eqref{aposteriori_telescopic}, we have that $\Vert
(L_{\delta ,\xi }^{\overline{n}}-L_{\xi }^{\overline{n}})f_{\xi }\Vert
_{L^{1}}$ can be bounded as 
\begin{equation*}
\Vert (L_{\delta ,\xi }^{\overline{n}}-L_{\xi }^{\overline{n}})f_{\xi }\Vert
_{L^{1}}\leq A\cdot \Vert f_{\xi }-\tilde{f}\Vert _{L^{1}}+B,
\end{equation*}%
where 
\begin{equation*}
A=A_{1}+(A_{2}+A_{3})\cdot \sum_{i=0}^{\overline{n}-1}C_{i}\qquad
B=B_{1}+(B_{2}+B_{3})\cdot \sum_{i=0}^{\overline{n}-1}C_{i}.
\end{equation*}%
Thanks to \eqref{eqmain} we have 
\begin{equation*}
\Vert f_{\xi }-f_{\xi ,\delta }\Vert _{L^{1}}\leq C+D\cdot \Vert f_{\xi }-%
\tilde{f}\Vert _{L^{1}}.
\end{equation*}%
for $C=A/(1-\alpha )$ and $D=B/(1-\alpha )$, $\alpha $ is appearing in %
\eqref{eqmain}. Therefore, by $($\ref{neweq1}$)$ we have  
\begin{equation*}
\Vert f_{\xi }-\tilde{f}\Vert _{L^{1}}\leq \Vert f_{\xi ,\delta }-\tilde{f}%
\Vert _{L^{1}}+C+D\cdot \Vert f_{\xi }-\tilde{f}\Vert _{L^{1}},
\end{equation*}%
which implies 
\begin{equation*}
\Vert f_{\xi }-\tilde{f}\Vert _{L^{1}}\leq \frac{1}{1-D}\cdot \left( \Vert
f_{\xi ,\delta }-\tilde{f}\Vert _{L^{1}}+C\right) .
\end{equation*}

\begin{comment}

\begin{algorithm}[H]
\caption{\emph{ComputeLInftyBound}}
\begin{algorithmic}
\REQUIRE $varNxiLF[I]$ \COMMENT{bound of $\Var_I(N_\xi L\tilde{f})$ in each
    $I\in\Pi$}, \\
 $measNxiLF[I]$ \COMMENT{bound of $\|N_\xi L\tilde{f}\|_{L^1(I)}$ in each
    $I\in\Pi$}, \\
 $rhoXiInfNorm$ \COMMENT{the $L^\infty$ norm of the noise kernel $\rho_\xi$},\\
 $L1ErrorBound$ \COMMENT{bound of $\|f_{\xi}-\tilde{f}\|_{L^1}$}.
\ENSURE $LInfBound[I]$ \COMMENT{bound of $\|f_\xi\|_{L^1(I)}$ in each $I\in\Pi$}. \\
\algrule
\FORALL{$I\in{}\Pi$}
   \STATE $LInfBound[I] \gets varNxiLF[I] + measNxiLF[I]/|I|+$ \\
    \qquad\qquad\qquad\qquad $+L1ErrorBound\cdot rhoXiInfNorm$
\ENDFOR
\RETURN $LInfBound[I]$
\end{algorithmic}
\end{algorithm}
\end{comment}

%%% Local Variables: 
%%% mode: latex
%%% TeX-master: "noise_induced_order"
%%% End: 

%\end{document}

\section{Contraction speed estimates via coarse-fine methods\label{coarsefine}}

%\begin{document}

In this section we show an efficient way to estimate the rate of contraction
of the discretized transfer operator and find the suitable $\overline{n}$
and $\alpha $ described in Section \ref{firstalgo}. Since $L_{\delta,\xi}$
is represented by a matrix, a first attempt to perform this task would be
to iterate and estimate the norm of the iterate. This method is not very
effective, since the matrix we should iterate is quite big. For this we
implement a strategy in which we get information for the full matrix from
the iterates of coarser versions of it. An earlier approach to this problem
for deterministic systems can be found in \cite{GNS}.

In Lemmas \ref{L1} and \ref{lem1mpLW} and Corollary \ref{c1} we have seen that 
\begin{equation}
\Vert (1-\pi _{\delta })N_{\xi }\Vert _{L^{1}\rightarrow L^{1}}\leq \delta
/\xi ,\qquad \Vert N_{\xi }(1-\pi _{\delta })\Vert _{L^{1}\rightarrow
L^{1}}\leq \delta /\xi ,  \label{necessest}
\end{equation}
We now prove the following lemma which bounds the distance between the powers
of $L_\xi$ and $L_{\delta,\xi}$, provided that the noise has been applied at
least once before the application of $L_\xi$ and $L_{\delta,\xi}$. 
%Essentially we are relating $L_{\delta,\xi}$ with any strictly
%``finer'' approximation of the system.

\begin{lemma}
\label{cflemma} Let $\|L_{\delta,\xi}^i|_V\|_{L^1}\leq C_i$; let $\sigma$ be
a linear operator such that $\sigma^2=\sigma$, $||\sigma||_{L^1}\leq 1$, and 
$\sigma\pi_\delta=\pi_\delta\sigma=\pi_\delta$; let $\Lambda=\sigma
N_{\xi}\sigma L$.

Then $\forall n\geq 0$
\begin{equation}  \label{prima}
\|(L_{\delta,\xi}^n-\Lambda^n)N_\xi\|_{L^1} \leq \frac{\delta}{\xi}\cdot
\left(2\sum_{i=0}^{n-1}C_i+1\right)
\end{equation}
\end{lemma}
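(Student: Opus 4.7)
The plan is to introduce $M := \pi_\delta N_\xi \pi_\delta L$, so that $L_{\delta,\xi}^n = M^n \pi_\delta$ by virtue of $\pi_\delta^2 = \pi_\delta$, and then split
\[
  (L_{\delta,\xi}^n - \Lambda^n)N_\xi \;=\; (M^n - \Lambda^n)\pi_\delta N_\xi \;+\; \Lambda^n(\pi_\delta - I)N_\xi.
\]
The second summand is immediately bounded in $L^1$ by $\delta/\xi$, using $\|\Lambda\|_{L^1}\le 1$ together with $\|(I-\pi_\delta)N_\xi\|_{L^1}\le \delta/\xi$ from \eqref{necessest}; this produces the additive ``$+1$''.

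For the first summand I would expand telescopically
\[
  (M^n-\Lambda^n)\pi_\delta N_\xi \;=\; \sum_{j=0}^{n-1} M^j (M-\Lambda)\,\Lambda^{n-1-j}\pi_\delta N_\xi,
\]
and estimate $\|M-\Lambda\|_{L^1}\le 2\delta/\xi$. This uses the identities $\pi_\delta - \sigma = -\sigma(I-\pi_\delta) = -(I-\pi_\delta)\sigma$, both immediate consequences of $\sigma\pi_\delta=\pi_\delta\sigma=\pi_\delta$, which allow us to rewrite
\[
  M-\Lambda \;=\; -\pi_\delta N_\xi(I-\pi_\delta)\sigma L \;-\; \sigma(I-\pi_\delta)N_\xi\,\sigma L,
\]
so that the two bounds in \eqref{necessest} each contribute $\delta/\xi$.

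The key remaining ingredient is the observation that $Mf$ always lies in the image of $\pi_\delta$, so $M w = L_{\delta,\xi} w$ whenever $w\in\operatorname{Im}(\pi_\delta)$; by induction this yields $M^j = L_{\delta,\xi}^{j-1} M$ for $j\ge 1$. Since $\pi_\delta, \sigma, N_\xi, L$ all preserve integrals, so do $M$ and $\Lambda$, and hence $(M-\Lambda)h$ belongs to the zero-average subspace $V$ for every $h$. Therefore $\|M^j|_V\|_{L^1}\le C_{j-1}$ for $j\ge 1$, and each telescopic summand with $j\ge 1$ is bounded by $2 C_{j-1}\delta/\xi$, while the $j=0$ term is bounded by $2\delta/\xi$.

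Summing over $j$ yields $\|(M^n - \Lambda^n)\pi_\delta N_\xi\|_{L^1}\le 2(\delta/\xi)\bigl(1+\sum_{i=0}^{n-2}C_i\bigr)$, so combined with the boundary estimate the total is at most $(\delta/\xi)\bigl(3 + 2\sum_{i=0}^{n-2}C_i\bigr)$. Since $\|L_{\delta,\xi}^{n-1}|_V\|_{L^1}\le 1$ we may assume $C_{n-1}\ge 1$ without loss of generality, so $3 \le 1 + 2C_{n-1}$ and the bound is at most $(\delta/\xi)\bigl(2\sum_{i=0}^{n-1}C_i+1\bigr)$, as desired. The main obstacle during execution is bookkeeping: one must check at every step of the telescope that the intermediate input to $M^j$ or $L_{\delta,\xi}^k$ actually lies in $V$, so that the contraction constants $C_i$ can be invoked; this, however, becomes automatic once one observes that every operator appearing in the problem preserves integrals.
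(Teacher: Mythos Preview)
Your telescoping strategy is essentially the same idea as the paper's, but there are two issues.

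\textbf{Main gap.} The step ``we may assume $C_{n-1}\ge 1$ without loss of generality'' is invalid. The $C_i$ are \emph{given} upper bounds in the hypothesis, and the conclusion must hold for whatever values are supplied; in the applications (and throughout the paper) one always has $C_i\le 1$, and the whole point is that eventually $C_i<1$. So your argument only establishes the bound $(\delta/\xi)\bigl(3+2\sum_{i=0}^{n-2}C_i\bigr)$, which exceeds the stated bound by $2(1-C_{n-1})\delta/\xi$ whenever $C_{n-1}<1$.

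\textbf{How the paper avoids this.} Rather than telescoping at the level of whole blocks $M$ versus $\Lambda$, the paper replaces the $\pi_\delta$'s by $\sigma$ one at a time. In effect it splits your perturbation $M-\Lambda$ into its two pieces
\[
  M-\Lambda \;=\; \pi_\delta N_\xi(\pi_\delta-\sigma)L \;+\; (\pi_\delta-\sigma)N_\xi\sigma L
\]
and treats them separately in the telescope. For the first piece, the factor $\pi_\delta$ sitting to the left can be absorbed into $M^j$ to produce $M^j\pi_\delta=L_{\delta,\xi}^{\,j}$, so that this half of the sum is controlled by $C_j$ rather than $C_{j-1}$. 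Only the second piece is stuck with $\|M^j|_V\|\le C_{j-1}$. This finer bookkeeping is exactly what buys back the missing index and yields the constant $1+2\sum_{i=0}^{n-1}C_i$ instead of your $3+2\sum_{i=0}^{n-2}C_i$.

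\textbf{Minor remark.} You assert that $\sigma$ preserves integrals, but this is not among the hypotheses. Fortunately it is not needed: since $\pi_\delta-\sigma=-(I-\pi_\delta)\sigma$ and $(I-\pi_\delta)g$ always has zero average, the range of $M-\Lambda$ lies in $V$ regardless of whether $\sigma$ itself is average-preserving.
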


In particular the Lemma applies if:

\begin{enumerate}
\item $\sigma=Id$ and $\Lambda=L_{\xi}$;

\item $\sigma=\pi_{\delta^{\prime }}$ and $\Lambda=L_{\delta^{\prime },\xi}$
, for any $\delta^{\prime }$ such that $n\delta' =\delta$ with $n\in \mathbb{R}$.
\end{enumerate}

As a consequence, we obtain a way to bound the contraction rate of certain
operator $L_{\delta^{\prime },\xi}$ on the zero-average space $V$ using the
computed contraction rate for a coarser operator $L_{\delta,\xi}$. We remark
that 
\begin{equation}  \label{cfeq}
\| L_{\delta^{\prime },\xi} ^{n+1}|_V \|_{L^1} \leq \| L_{\delta,\xi}^n|_V
\|_{L^1} + \| (L_{\delta^{\prime },\xi}^n - L_{\delta,\xi}^n)\|_{L^1}.
\end{equation}
We remark that on the left-hand side we have an $n+1$ in \eqref{cfeq},
which guarantees that the noise has been applied at least once; this permits
us to use Lemma \ref{cflemma} to estimate the second summand of the
right-hand of \eqref{cfeq}.

Thus, if one is searching for an $n$ such that $|| L_{\delta^{\prime },\xi}
^{n+1}|_V \|_{L^1} <1 $ this can be found and certified by using a suitable
coarse version $L_{\delta,\xi}$, computing the norm of its iterates and
using Lemma \ref{cflemma} in a way that the second hand of (\ref{cfeq}) is
smaller than $1$.

\begin{proof}[Proof of Lemma \ref{cflemma}]
  Notice that as a consequence of the hypotheses we have
\[
||\sigma g-\pi_{\delta}g||_{L^1}\leq \Var(g)\delta/2, \quad ||\sigma g-\pi_{\delta}g||_{W}\leq ||g||_{L^1}\delta/2
\]
because $\sigma g-\pi_{\delta}g=\sigma(1-\pi_\delta)g$ applying Lemmas \ref{lem1mpLW} and
\ref{idmpVarW}.

The proof is along the lines of what has been proved in Section \ref{telescopicf}.
Indeed, we have
\begin{align*}
||(L_{\delta,\xi}^{n}-&\Lambda^{n})N_{\protect\xi}||_{L^1}\leq\\
&||(\pi_{\delta}N_{\xi}\pi_{\delta}L)^n||_{L^1}\cdot ||(\pi_{\delta}-\sigma)N_{\xi}||_{L^1}\\
&+\sum_{i=0}^{n-1}||(\pi_{\delta}N_{\xi}\pi_{\delta}L)^{i}||_{L^1}\cdot||(\pi_{\delta}-\sigma)N_{\xi}||_{L^1}\cdot|| (\sigma L\sigma N_{\xi})^{n-i}||_{L^1}\\
&+\sum_{i=0}^{n-1}||(\pi_{\delta}N_{\xi}\pi_{\delta}L)^{i}\pi_{\delta}||_{L^1}\cdot||N_{\xi}(\pi_{\delta}-\sigma)||_{L^1}\cdot||L\sigma N_{\xi}(\sigma L\sigma N_{\xi} )^{n-i-1}||_{L^1}\\
&\leq ||(\pi_{\delta}-\sigma)N_{\xi}||_{L^1}+\sum_{i=0}^{n-1}C_i ||(\pi_{\delta}-\sigma)N_{\xi}||_{L^1}\cdot|| (\sigma L\sigma N_{\xi})^{n-i}||_{L^1}\\
&+\sum_{i=0}^{n-1}C_i ||N_{\xi}(\pi_{\delta}-\sigma)||_{L^1}\cdot ||L\sigma N_{\xi}(\sigma L\sigma N_{\xi} )^{n-i-1}||_{L^1},
\end{align*}
and the thesis follows from the fact that $||L||_{L^1}\leq 1$, $||N_{\xi}||_{L^1}\leq 1$, $||\sigma||_{L^1}\leq 1$.
%   \begin{align*}
%&\left\Vert (L_{\delta,\xi}^{n}-\Lambda^{n})N_{\protect\xi %
%    }\right\Vert _{L^{1}} \leq \\
%\leq &\|(\pi_\delta-\sigma)N_\xi\|_{L^1} + \sum_{i=0}^{n-1}\left\Vert (\pi _{\delta }N_{\xi }\pi _{\delta }L)^{i}\pi_\delta|_V%
%\right\Vert_{L^1}\cdot \left\Vert N_{\xi }\pi _{\delta }L(\pi _{\delta }-\sigma)N_{\xi}\right\Vert _{L^{1}}\\
%&\qquad+\sum_{i=0}^{n-1}\left\Vert (\pi _{\delta }N_{\xi }\pi _{\delta }L)^{i}\pi
%_{\delta }|_V\right\Vert_{L^1}\cdot \left\Vert N_{\xi }(\pi _{\delta }-\sigma)LN_{\xi }\right\Vert _{L^{1}}
%  \nonumber \\
% \leq& \|(\pi_\delta-\sigma)N_\xi\|_{L^1} + \left(\sum_{i=0}^{n-1}\left\Vert L_{\delta,\xi}|_V^{i}%
%\right\Vert_{L^1}\right)\times \\
%  &\qquad\times \left( 
%\left\Vert N_{\xi }(\pi _{\delta }-\sigma)LN_{\xi}\right\Vert_{L^{1}} +
%\left\Vert N_{\xi }\pi _{\delta }L(\pi _{\delta }-\sigma)N_{\xi}\right\Vert_{L^{1}} \right).\qedhere
%\end{align*}
\end{proof}

%\end{document}

\section{Estimating the average of an observable\label{Lyap}}

%\begin{document}

As a result of the previous sections, we are able to obtain a precise
approximation $\tilde{f}$ of $f_{\xi }$ in the $L^{1}$ norm. This is not
enough in order to estimate the Lyapunov exponent which we recall can be
defined as  $\lambda _{\xi }=\int h~df_{\xi }$ where  $h=\log |T^{\prime }|$%
. This is because $h$ is not in $L^{\infty }$ in the whole interval. In the
Belouzov-Zhabotinsky case, there are two points where $h$ goes to infinity:
the critical point and the point where the $|T^{\prime }|$ goes to $+\infty $%
. Outside of neighborhoods of these two points $h$ is bounded. Moreover, in
these two neighborhoods $h$ still has bounded $L^{1}$ norm. This is enough
to perform our estimates since we can have $L^{\infty }$ bounds on the
stationary measure $f_{\xi }$; this allows to compute the Lyapunov exponent 
using alternately  $L^{1}$ and $L^{\infty }$estimates on $f_{\xi }$ and $h$
in different sets. Therefore, we can join all these observations together to
obtain a rigorous approximation of $\lambda _{\xi }$  applying the following
strategy:

\begin{itemize}
\item we select a region $E$ of $[0,1]$ such that $h$ is in $L^{\infty }$
outside $E$;

\item we estimate, on $E$, the quantities $\Vert f_{\xi }\Vert _{L^{\infty
}(E)}$ and $\Vert h\Vert _{L^{1}(E)}$;

\item we approximate (keeping rigorously track of the numerical errors) the
integral $\int_{[0,1]}h~df_{\xi }$ with $\int_{[0,1]\setminus E}h~d\tilde{f}$
(discarding the set $E$ from the computation);

\item we estimate the error in such an approximation in terms of $\Vert
f_{\xi }-\tilde{f}\Vert _{L^{1}}$, $\Vert f_{\xi }\Vert _{L^{\infty }(E)}$, $%
\Vert h\Vert _{L^{1}(E)}$ and $\Vert h\Vert _{L^{\infty }([0,1]\setminus E)}$
(see Corollary \ref{corollsec5}).
\end{itemize}

In Subsection \ref{fistsubsection} we show how the error can be estimated in
terms of the mentioned quantities. The remaining subsections are devoted to
estimating $\Vert f_{\xi }\Vert _{L^{\infty }(E)}$ and $\Vert h\Vert
_{L^{1}(E)}$, notice that in the case of uniform noise $\Vert f_{\xi }\Vert
_{L^{\infty }(E)}\leq 1/\xi $, but we are able to obtain a better estimate
via $\tilde{f}$.

% In this section we estimate the error when computing the integral of an\marginpar{Questa sezione è difficilmente comprensibile. Io stesso non ho capito il ruolo dei vari pezzi. Cerco di migliorarla, ma per favore rileggetela con attenzione.}
% observable against the invariant density $f_\xi$, assuming that we know a
% (normalized) approximation $\tilde{f}$ of the invariant density with
% rigorous bound for the $L^1$ error $\|\tilde{f}-f_\xi\|_{L^1}$.

% In the first Subsection we estimate the $L^\infty$ norm $\|f_\xi\|_{L^\infty}$ in certain intervals.

\subsection{Approximating the average using $L^{1}$ and $L^{\infty }$
estimates\label{fistsubsection}}

In this section we assume that $\tilde{f}$ is an approximation of $f_\xi$
and both are probability measures, therefore $\tilde{f}-f_\xi$ has $0$
average.
\begin{corollary}
\label{corollsec5}Let $f$ and $\tilde{f}$ be probability densities on the
measure space $(X,m)$, both contained in $L^{1}$ and in $L^{\infty }$.
Let $E\subset X$ be a Borel subset, and $H$ be an $L^{1}$ observable that is 
$L^{\infty }$ in $X\setminus E$. Then 
\begin{equation*}
\scalebox{0.95}{$\left|\int_X Hf \ dm - \int_{X\setminus E} H\tilde{f} \ dm
\right| \leq \|H\|_{L^1(E)}\cdot\|f\|_{L^\infty(E)} + \frac{\sup_{X\setminus
E} H + \inf H_{X\setminus E}}{2} \cdot \|f-\tilde{f}\|_{L^1}.$}
\end{equation*}
\end{corollary}

The proof of the corollary is straightforward, applying the following Lemma on
the set $X\setminus E$

\begin{lemma}
Let $(X,m)$ be a measure space, let $H\in L^\infty(X)$, and let $v\in L^1(X)$
a function having $0$ average. Then we have 
\begin{equation*}
\left| \int_X H\cdot v \ dm \right| \leq \frac{\sup H -\inf H}{2} \cdot
\|v\|_{L^1}.
\end{equation*}
\end{lemma}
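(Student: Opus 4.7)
The plan is to exploit the zero-average condition on $v$ by subtracting a cleverly chosen constant from $H$. Since $\int_X v \, dm = 0$, for any constant $c \in \mathbb{R}$ we have
\[
\int_X H \cdot v \, dm = \int_X (H - c) \cdot v \, dm,
\]
and therefore
\[
\left| \int_X H \cdot v \, dm \right| \leq \|H - c\|_{L^\infty} \cdot \|v\|_{L^1}.
\]
The task then reduces to choosing $c$ to minimize $\|H - c\|_{L^\infty}$.

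The optimal choice is the midpoint $c = (\sup H + \inf H)/2$, since for any $x \in X$ we then have
\[
|H(x) - c| \leq \frac{\sup H - \inf H}{2},
\]
as both $H(x) - \sup H \leq 0$ and $H(x) - \inf H \geq 0$ control the distance from $c$ symmetrically. Substituting this into the previous display yields the claimed inequality.

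There is essentially no obstacle here — the entire argument is a two-line application of the translation invariance of the integral against a zero-mean function together with the elementary observation that the $L^\infty$ diameter of $H$ about its midpoint equals half of its oscillation $\sup H - \inf H$.
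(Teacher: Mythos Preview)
Your proof is correct and is essentially the same as the paper's: subtract a constant $c$ from $H$ using the zero-average of $v$, then choose $c=(\sup H+\inf H)/2$ to minimize $\|H-c\|_{L^\infty}$. The only cosmetic difference is that the paper splits $\int Hv$ as $\int (H-c)v + \int cv$ before noting the second term vanishes, whereas you observe directly that $\int Hv = \int (H-c)v$.
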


\begin{proof}
Indeed, for a constant $c$ we have 
\begin{align*}
\left| \int_X H v \ dm \right| &\leq \left| \int_X (H-c) v \ dm\right| +
\left| \int_X c v \ dm \right| \\
&\leq \|H-c\|_{L^\infty} \cdot \|v\|_{L^1},
\end{align*}
because $v$ has $0$ average, and this is clearly optimized taking $c=(\sup H
+ \inf H)/2$.
\end{proof}

\begin{remark}
\label{obsestalg} Corollary \ref{corollsec5} yields immediately an algorithm for
estimating an  observable that is $L^1$, and is $L^\infty$ outside a
neighborhood of a finite number of  points $s_i$ where it goes to $\infty$,
as is the case with the observable  $\log|T^{\prime }|$ of the system we are
studying. In fact, there is a trade-off on the  size of $E$, and we attempt
different sets $E$ enclosing the $ s_i$ with intervals of different sizes on
order to obtain the tightest possible estimate  on the error. Every such
choice of the set $E$ yields an approximation of  $\int_X Hf \ dm $ as $%
\int_{X\setminus E} H\tilde{f} \ dm $ and a bound for the error.
\end{remark}

\subsection{$L^\infty$ bounds for the stationary measure in an interval.}

\label{LocLInfty}

To estimate the average of the unbonded observable $h$ and apply Corollary %
\ref{corollsec5}, in this Subsection we obtain a bound for the $L^{\infty }$
norm of the invariant measure $f_{\xi }$ on intervals of a certain
partition. \ We derive it as a byproduct of the rigorous estimate of the $%
L^{1}$ error, and the algorithms explained in Sections \ref{varLg} and \ref%
{varNLf}, that allow to bound $\mathrm{Var}_{I}(N_{\xi }L\tilde{f})$ for
each $I\in \Pi $.

\begin{lemma}
Let $\Pi$ be a uniform partition, for each $I\in\Pi$ we have 
\begin{equation*}
\|f_{\xi}\|_{L^\infty(I)} \leq \mathrm{Var}_I(N_\xi L\tilde{f}) + \frac{%
\|N_\xi L\tilde{f}\|_{L^1(I)}}{|I|} + \|\tilde{f}-f_{\xi}\|_{L^1}\cdot
\|\rho_\xi\|_{L^\infty}.
\end{equation*}
\end{lemma}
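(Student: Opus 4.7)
The plan is to use the fact that $f_\xi = L_\xi f_\xi = N_\xi L f_\xi$, add and subtract $N_\xi L \tilde{f}$, and bound the two resulting pieces in $L^\infty(I)$ by different mechanisms: the computable piece $N_\xi L \tilde{f}$ via a standard ``BV-on-an-interval'' bound, and the error piece $N_\xi L(f_\xi-\tilde{f})$ via the smoothing properties of convolution with $\rho_\xi$.

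First I would write $f_\xi = N_\xi L \tilde{f} + N_\xi L(f_\xi - \tilde{f})$ and estimate each term separately. For the error term, recall that $N_\xi$ is (essentially) convolution with $\rho_\xi$ followed by boundary reflection. Young's inequality gives $\|N_\xi g\|_{L^\infty} \leq \|\rho_\xi\|_{L^\infty}\cdot\|g\|_{L^1}$, and since $L$ is an $L^1$-contraction we obtain
\[
\|N_\xi L(f_\xi - \tilde{f})\|_{L^\infty(I)} \leq \|\rho_\xi\|_{L^\infty}\cdot\|f_\xi-\tilde{f}\|_{L^1},
\]
which is the third term of the claimed bound.

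For the main term, I would use the elementary fact that for any $g\in BV(I)$,
\[
\|g\|_{L^\infty(I)} \leq \mathrm{Var}_I(g) + \frac{\|g\|_{L^1(I)}}{|I|}.
\]
This holds because there exists $x_0\in I$ with $|g(x_0)|\leq \|g\|_{L^1(I)}/|I|$ (by the mean value theorem for integrals), and then for any other $x\in I$ the triangle inequality gives $|g(x)|\leq |g(x_0)|+\mathrm{Var}_I(g)$. Applying this with $g = N_\xi L \tilde{f}$ yields the first two terms of the stated inequality.

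Combining the two estimates via the triangle inequality in $L^\infty(I)$ gives the result. No step is really an obstacle here; the only subtlety is confirming that the convolution bound $\|N_\xi g\|_{L^\infty}\leq \|\rho_\xi\|_{L^\infty}\|g\|_{L^1}$ survives the reflecting-boundary modification of $\hat{*}$ from Definition~\ref{def:hatconv}, which follows directly because $\pi^\ast$ only folds mass already present without amplifying pointwise values.
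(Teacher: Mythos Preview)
Your proof is correct and follows exactly the same decomposition as the paper: write $f_\xi = N_\xi L f_\xi$, split off $N_\xi L\tilde f$, bound that piece by $\mathrm{Var}_I + \|\cdot\|_{L^1(I)}/|I|$, and bound the remainder via $\|N_\xi\|_{L^1\to L^\infty}\le \|\rho_\xi\|_{L^\infty}$. One small caveat: your justification that ``$\pi^\ast$ only folds mass without amplifying pointwise values'' is not literally true (reflection can double a density near the boundary), but in the paper's setting the noise never reaches the boundary, so the issue does not arise---the paper itself simply asserts the same bound without further comment.
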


\begin{proof}
Indeed, 
\begin{align*}
\|f_{\xi}\|_{L^\infty(I)} &= \|N_\xi Lf_{\xi}\|_{L^\infty(I)} \\
&\leq \|N_\xi L\tilde{f}\|_{L^\infty(I)} + \|N_\xi L(\tilde{f}%
-f_{\xi})\|_{L^\infty} \\
&\leq \Var_I(N_\xi L\tilde{f}) + \|N_\xi L\tilde{f}\|_{L^1(I)}/ |I| +
\|N_\xi\|_{L^1\rightarrow L^\infty} \cdot \|L(\tilde{f}-f_{\xi})\|_{L^1}
\end{align*}
and the estimate follows because $\|N_\xi\|_{L^1\rightarrow L^\infty}\leq
\|\rho_\xi\|_{L^\infty}$.
\end{proof}

\subsection{$L^1$ bounds on $\log|T^{\prime }|$}

In this section we compute explicit bounds for the $L^1$-norm of $%
\log|T^{\prime }|$ for the map defined in Section \ref{map} in a
neighborhood of the points where it is not bounded, as required to apply
Remark \ref{obsestalg}. As can be deduced from its definition in Section \ref%
{map}, we need to do so in intervals enclosing $x=0.125$ and $x=0.3$. We
omit the proofs, as they are all very elementary.

\begin{lemma}
For $0.125-2^{-6}<u<0.125<v<0.125+2^{-6}$ we have 
\begin{align*}
\int_u^v \log|T^{\prime }(x)| \ dx \in &-\frac{2}{3}\bigg[%
(0.125-u)(\log(0.125-u)-1)+ (v-0.125)(\log(v-0.125)-1)\bigg] \\
&- (v-u)\log(3) - \frac{v^2-u^2}{2} + [0, (\log(5)-\log(4))(v-u)]
\end{align*}
\end{lemma}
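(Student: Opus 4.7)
The plan is to isolate the singular behaviour of $T'$ at $x=1/8$ by an explicit factorisation. On the branch $0\le x\le 0.3$ we have
\[
T'(x)=e^{-x}\!\left[\tfrac{1}{3}(x-\tfrac{1}{8})^{-2/3}-a-(x-\tfrac{1}{8})^{1/3}\right]=\tfrac{1}{3}(x-\tfrac{1}{8})^{-2/3}e^{-x}\,g(x),
\]
where $g(x):=1-3a(x-\tfrac{1}{8})^{2/3}-3(x-\tfrac{1}{8})$. Interpreting the fractional powers via real cube roots, the factor $(x-1/8)^{-2/3}$ is positive for every $x\neq 1/8$, so once we check $g>0$ on the range the absolute value drops and
\[
\log|T'(x)|=-\log 3-x-\tfrac{2}{3}\log|x-\tfrac{1}{8}|+\log g(x).
\]

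\textbf{Step 1: controlling $g$.} On the interval $|x-1/8|\le 2^{-6}$ one has $(x-1/8)^{2/3}\le 2^{-4}=1/16$ and $|3(x-1/8)|\le 3/64$, and $a<0.51$, so
\[
g(x)\ge 1-\tfrac{3\cdot 0.51}{16}-\tfrac{3}{64}>\tfrac{4}{5}.
\]
Differentiating, $g'(x)=-2a(x-1/8)^{-1/3}-3$ vanishes only at $x-1/8=-8a^3/27\approx-0.038$, which lies outside our interval; hence $g$ is monotone on each side of $1/8$, with maximum $g(1/8)=1$. Therefore $g(x)\in[4/5,1]$ and consequently $\log g(x)\in[\log(4/5),0]=[-(\log 5-\log 4),0]$ throughout the range.

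\textbf{Step 2: integrating the explicit terms.} The linear summand gives $\int_u^v(-x)\,dx=-(v^2-u^2)/2$ and the constant summand gives $-(v-u)\log 3$. For the singular summand, split at $1/8$ and substitute $s=1/8-x$ on $[u,1/8]$ and $s=x-1/8$ on $[1/8,v]$, using $\int_0^w\log s\,ds=w(\log w-1)$; this produces exactly the bracketed expression in the statement with prefactor $-2/3$. Finally the remainder $\int_u^v\log g(x)\,dx$ lies in the interval $(v-u)\cdot[\log(4/5),0]$, which is enclosed in the stated tolerance of length $(\log 5-\log 4)(v-u)$.

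\textbf{Obstacle.} There is no substantive difficulty; the argument is routine elementary calculus plus one short numerical verification. The only care required is bookkeeping at the branch point $x=1/8$ (ensuring the improper integral converges and the log-substitution is applied correctly on each side) and confirming that the crude bound $g\ge 1-3a\cdot 2^{-4}-3\cdot 2^{-6}>4/5$ really holds, which in turn relies only on the known interval enclosure for $a$.
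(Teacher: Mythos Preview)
Your proof is correct and follows essentially the same route as the paper's own argument: isolate the singular factor $\tfrac{1}{3}|x-\tfrac18|^{-2/3}$, bound the remaining multiplicative correction to lie in $[4/5,1]$, and integrate the three explicit terms. The paper obtains the $[4/5,1]$ window by the crude additive bounds $\tfrac{1}{3}|x-\tfrac18|^{-2/3}\ge 5$ and $|(x-\tfrac18)^{1/3}+a|\le 1$, whereas you package the same estimate via the explicit factorization $g(x)=1-3a(x-\tfrac18)^{2/3}-3(x-\tfrac18)$ and a short monotonicity check; the difference is purely cosmetic.
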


\iffalse

\begin{proof}
Then we have 
\begin{equation*}
T^{\prime }(x) = \left(- (x-0.125)^{1/3} - a + \frac{1}{3}%
|x-0.125|^{-2/3}\right)e^{-x}, 
\end{equation*}
and 
\begin{equation*}
\log|T^{\prime }(x)| = \log\left(\frac{1}{3}|x-0.125|^{-2/3} -
(x-0.125)^{1/3} - a\right) - x. 
\end{equation*}
Notice that for $|x-0.125|<2^{-6}$ we have 
\begin{equation*}
\frac{1}{3}|x-0.125|^{-2/3} \geq \frac{1}{3}2^4 \geq 5 
\end{equation*}
while 
\begin{equation*}
\left|(x-0.125)^{1/3} + a\right| \leq 2^{-2} + a \leq 1. 
\end{equation*}
Consequently in the interval $[0.125-2^{-6},0.125+2^{-6}]$ we have that $%
\log|T^{\prime }(x)| > 0$, and furthermore 
\begin{align*}
\log|T^{\prime }(x)| &\leq \log\left(\frac{1}{3}|x-0.125|^{-2/3}\right) - x
\\
&= -\frac{2}{3} \log|x-0.125| -\log(3) - x.
\end{align*}

It follows that for $0.125-2^{-6}<u<0.125<v<0.125+2^{-6}$ we have 
\begin{align*}
\int_u^v \log|T^{\prime }(x)| \leq &-\frac{2}{3}\bigg[(0.125-u)(%
\log(0.125-u)-1)+ (v-0.125)(\log(v-0.125)-1)\bigg] \\
&- (u-v)\log(3) - \frac{v^2-u^2}{2}.
\end{align*}
In the same way we have that 
\begin{align*}
\log|T^{\prime }(x)| &\leq \log\left(\frac{4}{5}\frac{1}{3}%
|x-0.125|^{-2/3}\right) - x \\
&= \log\frac{4}{5} + \log\left(\frac{1}{3}|x-0.125|^{-2/3}\right) - x
\end{align*}
consequently subtracting $(\log(5)-\log(4))(v-u)$ the same value above is
also valid as a lower bound.
\end{proof}

\fi

\begin{lemma}
For $0.2<x<0.3$ we have 
\begin{equation*}
\int_{x}^{0.3} \log|T^{\prime }(x)| \ dx \in
\left(\log([d_1,d_2])+\log(0.3-x)-1\right)(0.3-x) - \frac{1}{2}(0.3^2-x^2).
\end{equation*}
for $d_1 = \frac{1}{3}(\frac{2}{3}\cdot (0.175)^{-5/3} + (0.175)^{-2/3})$
and $d_2 =[a + (x-0.125)^{1/3} - \frac{1}{3}|x-0.125|^{-2/3}]/(0.3-x)$,
where $a=0.50607356\dots$ as in Section \ref{map}.
\end{lemma}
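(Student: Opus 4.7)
The plan is to split $\log|T'(y)|$ into a bounded part and an explicit singular part that can be integrated in closed form, exploiting the fact that $T'$ vanishes to first order at $y=0.3$ by the choice of the parameter $a$.

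On the first branch, a direct differentiation gives $T'(y)=g(y)\,e^{-y}$, where
\[
g(y):=\tfrac{1}{3}(y-0.125)^{-2/3}-a-(y-0.125)^{1/3}.
\]
By the definition of $a$ in Remark~\ref{abcrm} we have $g(0.3)=0$; moreover $g'<0$ on $(0.125,0.3]$ as a sum of two manifestly negative terms, hence $g>0$ on $[0.2,0.3)$. Therefore $\log|T'(y)|=\log g(y)-y$, and the elementary piece integrates immediately to $\int_x^{0.3}(-y)\,dy=-\tfrac12(0.3^2-x^2)$, matching the last summand of the stated enclosure.

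The main step is to handle $\int_x^{0.3}\log g(y)\,dy$ by isolating the simple zero of $g$ at $0.3$. Writing $g(y)=h(y)(0.3-y)$ with $h(y):=g(y)/(0.3-y)$, I would show that $h$ is monotone on $[x,0.3]$ by a short calculus argument: $h'(y)$ has the sign of $N(y):=g'(y)(0.3-y)+g(y)$, with $N'(y)=g''(y)(0.3-y)>0$ (since $g''>0$ as a sum of two positive terms for $y>0.125$) and $N(0.3)=0$; hence $N<0$ on $[x,0.3)$ and $h$ is strictly decreasing. This yields the uniform enclosure $h(y)\in[d_1,d_2]$ for $y\in[x,0.3]$, with $d_1=\lim_{y\to 0.3^-}h(y)=|g'(0.3)|=\tfrac{1}{3}\bigl(\tfrac{2}{3}(0.175)^{-5/3}+(0.175)^{-2/3}\bigr)$ (by L'H\^opital) and $d_2=h(x)=g(x)/(0.3-x)$, matching the values in the statement.

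Once this enclosure is in place, the rest is routine. Splitting $\log g(y)=\log h(y)+\log(0.3-y)$, the first summand integrates to something in $\log([d_1,d_2])\cdot(0.3-x)$, and the substitution $u=0.3-y$ gives $\int_x^{0.3}\log(0.3-y)\,dy=(0.3-x)(\log(0.3-x)-1)$. Adding the three contributions and factoring $(0.3-x)$ out of the first two reproduces exactly the stated interval enclosure. The only non-mechanical step is the monotonicity of $h$, which is secured by the convexity of $g$; once this is granted, everything else is direct calculus on an interval where the integrand has only an integrable logarithmic singularity at $0.3$.
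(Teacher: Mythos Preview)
Your proof is correct and essentially the same as the paper's. Both arguments rest on the convexity of $g(y)=\tfrac13(y-0.125)^{-2/3}-a-(y-0.125)^{1/3}$ (which vanishes at $0.3$) to show that $g(y)/(0.3-y)$ is trapped in $[d_1,d_2]$, and then integrate $\log(0.3-y)$ explicitly; the only difference is presentational---the paper invokes convexity directly via the tangent line at $0.3$ and the secant from $(x,g(x))$ to $(0.3,0)$ to sandwich $g$, whereas you phrase the same fact as monotonicity of the quotient $h=g/(0.3-\cdot)$, proved through $N'=g''\,(0.3-y)>0$ and $N(0.3)=0$. (Note that the formula for $d_2$ in the stated lemma carries a sign slip: one should have $d_2=g(x)/(0.3-x)>0$, which is exactly your $h(x)$.)
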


\iffalse

\begin{proof}
we have 
\begin{equation*}
T^{\prime }(x) = \left(- (x-0.125)^{1/3} - a + \frac{1}{3}%
|x-0.125|^{-2/3}\right)e^{-x}, 
\end{equation*}
and  
\begin{equation*}
\log|T^{\prime }(x)| = \log\left(\frac{1}{3}|x-0.125|^{-2/3} -
(x-0.125)^{1/3} - k_2\right) - x. 
\end{equation*}
Notice that 
\begin{equation*}
\phi(x) = \frac{1}{3}|x-0.125|^{-2/3} - (x-0.125)^{1/3} - k_2 
\end{equation*}
is a convex function in $x$ and, for $x \rightarrow 0.3^-$ approaches $0$
with derivative 
\begin{equation*}
d_1 = \frac{1}{3}\frac{-2}{3}(0.3-0.125)^{-5/3} - \frac{1}{3}%
(0.3-0.125)^{-2/3}. 
\end{equation*}
Consequently it always stays above the linear function $d_1\cdot(x-0.3)$,
and we have that 
\begin{equation*}
\log|T^{\prime }(x)| > \log(0.3-x) + \log(d_1) - x. 
\end{equation*}
In the same way if the line through $(x,\phi(x))$ and $(0.3,\phi(0.3))$
always stays above $T$ in the interval $[x,0.3]$, we also have a the upper
bound given by the linear function $d_2\cdot (x-0.3)$ where $d_2=
\phi(x)/(x-0.3)$.
\end{proof}

\fi

\begin{lemma}
For $0.3 < x < 0.303$ we have 
\begin{align*}
\int_{0.3}^{x}\log |T^{\prime }(x)| \ dx= & \bigg(\log k_1 + \log 19 + 19
\log 10 - 38 + \log(10/3)\bigg)\cdot (x-0.3) + \\
& 18(x\log(x)+0.3\log(0.3))+(x-0.3)\log(x-0.3)-\frac{190}{6}(x-0.3)^2.
\end{align*}
\end{lemma}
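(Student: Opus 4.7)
The plan is to reduce the identity to routine antidifferentiation from the explicit formula for $T$ on $[0.3,1]$. First I would differentiate $T(x)=c(10xe^{-10x/3})^{19}+b$ via the chain rule to obtain
\[
T'(x) = 19c\cdot 10^{19}\cdot x^{18}\cdot e^{-190x/3}\cdot\bigl(1-\tfrac{10x}{3}\bigr).
\]
The key observation is that on $(0.3,0.303]$ the factor $1-10x/3$ is negative and equal to $-\tfrac{10}{3}(x-0.3)$, which shows that $x=0.3$ is a critical point, $T'(0.3)=0$, and that removing absolute values produces a single $\log(x-0.3)$ singularity in $\log|T'|$. After identifying $k_1$ with $c$ and collecting the constants, taking logarithms decomposes
\[
\log|T'(x)| = \bigl(\log k_1+\log 19+19\log 10+\log(10/3)\bigr)+18\log x-\tfrac{190x}{3}+\log(x-0.3),
\]
so the integrand is a sum of four elementary pieces with a single integrable logarithmic singularity at the left endpoint.

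Second, I would integrate each piece on $[0.3,x]$. The constant and linear terms are trivial; the term $18\log t$ integrates to $18(t\log t-t)$; and the singular term $\log(t-0.3)$ becomes $(t-0.3)\log(t-0.3)-(t-0.3)$ via the substitution $u=t-0.3$, which vanishes at the left endpoint by the standard fact that $u\log u\to 0$ as $u\downarrow 0$. Substituting the endpoints and combining gives the stated identity.

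No substantive obstacle arises; the only care needed is algebraic bookkeeping. In particular, to reach exactly the form stated one must (i)~track the sign change when removing the absolute value around $1-10x/3$, and (ii)~consolidate the several linear-in-$(x-0.3)$ remainders into the single coefficient of $(x-0.3)$. Concretely, the constant $-38$ in that coefficient is the sum $-18-1-19$, where $-18(x-0.3)$ comes from the boundary term in $\int 18\log t\,dt$, $-(x-0.3)$ from the boundary term in $\int\log(t-0.3)\,dt$, and $-19(x-0.3)$ from the rewriting $x^2-0.09=(x-0.3)^2+0.6(x-0.3)$ used to present the quadratic contribution as $-\tfrac{190}{6}(x-0.3)^2$.
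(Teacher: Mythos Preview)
Your proposal is correct and follows essentially the same approach as the paper's own proof: compute $T'$ explicitly, take logarithms to decompose $\log|T'|$ into a constant, $18\log x$, $-\tfrac{190}{3}x$, and the singular piece coming from $\log\bigl(\tfrac{10}{3}x-1\bigr)=\log(10/3)+\log(x-0.3)$, then antidifferentiate term by term and collect the linear remainders into the coefficient $-38=-18-1-19$. The only cosmetic difference is that the paper keeps the factor $\log\bigl(\tfrac{10}{3}x-1\bigr)$ intact and uses the formula $\int\log(Ax+B)\,dx=(x-B/A)\log(Ax+B)-x$, whereas you split off $\log(10/3)$ first and substitute $u=t-0.3$; the bookkeeping is otherwise identical.
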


\section{Computation details and results}\label{results}

%\begin{document}

\label{compresults}

%-------------------------------cose da sistemare ---------------------------------------------------

%We will study the one dimensional dynamical system $T:[0,1]\to [0,1]$ defined in \cite{MT} perturbed
%by a noise of size $\xi$; to such a system is associated 
%a Markov chain on the interval $[0,1]$ and under weak hypothesis on the noise 
%the stationary measure $\mu_{\xi}$ of this random process has a density
%(for these and other properties of random dynamical systems we refer to \cite{Ar}). 

\begin{figure}[tbp]
\begin{subfigure}[b]{0.45\textwidth}
   \includegraphics[width=55mm,height=50mm]{./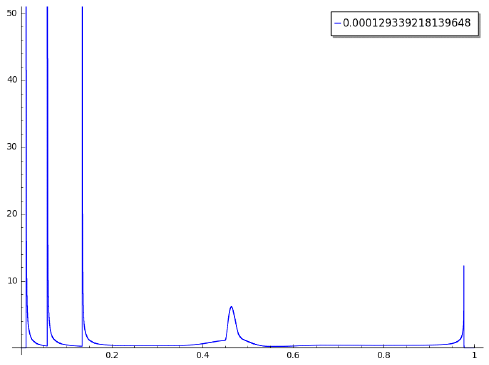} 
   \caption{A plot of of the approximated invariant density for $\xi_1=0.129 \times 10^{-3}$ (up to an error of $0.565\times 10^{-3}$ in the $L^1$ norm).}
  \end{subfigure}  
\begin{subfigure}[b]{0.45\textwidth}
    \includegraphics[width=55mm,height=50mm]{./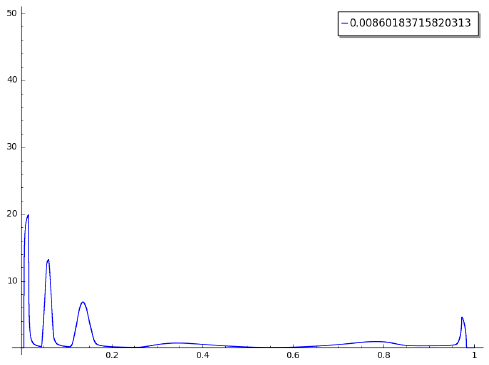}
   \caption{A plot of of the approximated invariant density for $\xi_2=0.860\times 10^{-2}$ (up to an error of $0.973\times 10^{-5}$ in the $L^1$ norm)}
  \end{subfigure}
\caption{Plot of the computed density for the noise amplitudes in
Proposition \protect\ref{theo:final}.}
\label{fig:densities}
\end{figure}

We give here some details about the code performing our computer aided
estimates and about the results. The main algorithm is written in Python
using the Sage framework \ and interval arithmetics (\cite{Tucker}), some
critical parts are written in C++ and uses (optionally) the GPU, this in
particular is used for the iterartion of large matrices needed to apply the
methods of Section \ref{coarsefine}. Such parts have been optimized to use
high performance computing; even so each contraction test has required a
time of the of order of a week. \footnote{The contraction time estimates were run on a Asus GeForce GTX 1050Ti, 4GB of Ram GPU installed in   a desktop computer with an AMD A4-6300 3.4 Ghz processor and 8 Gb of Ram.
The matrices were assembled on a Dell R710 server with 2 sixcore Xeon 5660 2.8 Ghz processors and 24 Gb of Ram.}
 The code we used can be found at
\begin{verbatim}
 http://im.ufrj.br/~maurizio.monge/wordpress/rigorous_computation_dyn/
 
\end{verbatim}

%The original system being originally defined as a transformation from $%
%\mathbb{R}$ to $\mathbb{R}$, we have mapped the interval $[0.014,0.831]$ to
%the interval $[0,1]$.
%
%Due to the choice of $b$ in \cite{MT} the whole interval $[0,1]$ is mapped into $%
%[0.01,0.99]$, therefore, for the noise sizes taken into account in our
%treatment, i.e., noise size smaller than $0.01$, we do not see any reflection
%at the boundary.
%
%This implies that, while in our mathematical treatment we consider
%reflection at the boundary, on the experimental side reflection
%at the boundary does not play any role.

\begin{figure}[htbp]
\centering
\includegraphics[width=4in]{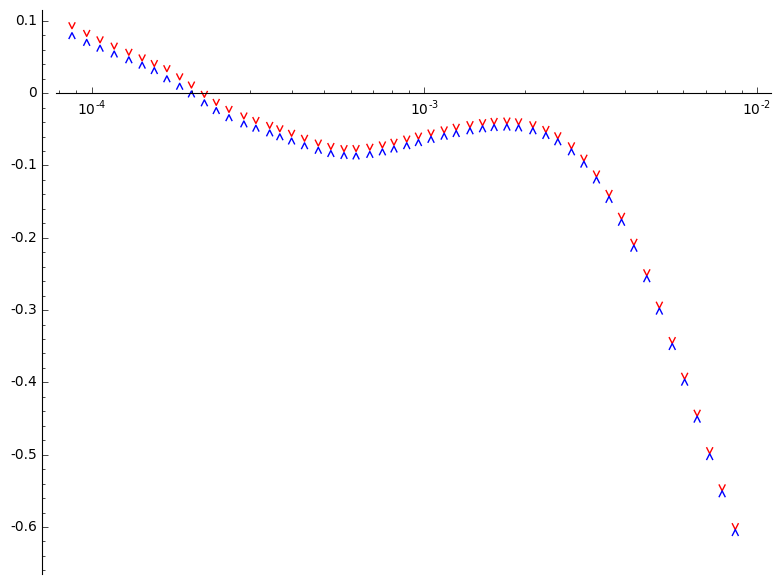}
\caption{A plot of the intervals enclosing the values of the Lyapunov
exponent for several sizes of the noise. The plotted values are listed in
Table 1.}
\label{flyap}
\end{figure}

Table \ref{t1} contains the result of the computer aided estimates we
performed and the values of the parameters used in these estimates. In
Figure \ref{flyap} we summarize with a graph the most important information
contained in the table. The graph shows intervals enclosing the Lyapunov
exponent at the selected noise values. These are the final result of our
computer aided estimates; it is worth to remark again that the estimated
requiring more computational power are the ones performed to prove that the
Lyapunov exponent is positive for small size of the noise.

To ease the understanding of Table \ref{t1}, we now outline some more
details of the implementation of our algorithm and how the parameters take
place in the algorithm's execution; the columns are ordered as they are
subsequently used in the algorithm, or deduced from previous quantities and
via computations.

One of the main ingredients and goals of the computer aided estimates is to
compute the number of iterates of the transfer operator needed to contract
the zero average space (see Item 1 of Section \ref{infoalgo}). For this we
apply the "coarse fine" methods explained in Section \ref{coarsefine}. For
each value of the noise amplitude, denoted by $\xi $, we build a coarse
discretization $L_{\xi ,\delta _{\text{contr}}}$ of the operator $L_{\xi }$,
on a partition of coarse size $\delta _{\text{contr}}$. We then compute
values of $n_{\text{contr}}$ and $\alpha _{\text{contr}}$ represented in
Table \ref{t1}, which satisfy 
\begin{equation*}
||L_{\xi ,\delta _{\text{contr}}}^{n_{\text{contr}}}|_{V}||_{L^{1}}\leq
\alpha _{\text{contr}}<1;
\end{equation*}%
and compute explicit bounds for $||L_{\xi ,\delta _{\text{contr}%
}}^{i}|_{V}||_{L^{1}}$ (whose value does not appear in the table). The
algorithm used for these finite dimensional estimates is the same as in \cite%
{GN} and there explained.

We consider then a finer partition size $\delta $ and use the coarse fine
estimates of Section \ref{coarsefine} to compute $\alpha $ and $%
\sum_{i=0}^{n_{\text{contr}}}C_{i}$, where $C_{i}=||L_{\xi ,\delta
}^{i}|_{V}||_{L^{1}}$ and
\begin{equation*}
||L_{\xi ,\delta }^{n_{\text{contr}}+1}|_{V}||_{L^{1}}\leq \alpha .
\end{equation*}

The same bounds work for $L_{\xi }$, i.e. 
\begin{equation*}
||L_{\xi }^{n_{\text{contr}}+1}|_{V}||_{L^{1}}\leq \alpha
,~and~\sum_{i=0}^{n_{\text{contr}}}||L_{\xi }^{i}|_{V}||_{L^{1}}\leq
\sum_{i=0}^{n_{\text{contr}}}C_{i}.
\end{equation*}%
Notice that the bigger the ${n_{\text{contr}}}$, the worse is going to be
the estimate on the $L^{1}$ norm on $(L_{\xi }^{n}-L_{\xi ,\delta _{\text{%
contr}}}^{n})N_{\xi }$, i.e., the error coming from the coarse-fine
inequality. While increasing $n_{\text{contr}}$ permits us to find smaller $%
\alpha _{\text{contr}}$, this may not imply that the corresponding $\alpha $
is smaller. Our algorithms attempts to find the best compromise. Needless to
say, in practice this procedure may fail, if unable to detect any
contraction in a reasonable time. This might happen for example if the original system is not mixing.
%In all the cases under consideration we obtain a suitable contraction
%(with $\alpha_{\text{contr}%
%}\leq 0.1$, that will yield $\alpha\leq 0.6$) for $\overline{n}_{\text{contr}%
%}\approx 60$.

To estimate an upper bound to the $L^{1}$ error in the computation of the
stationary measure we use the results shown in Section \ref{sec3}. The
column "A priori..." contains the a priori estimate on the $L^{1}$ error of
the approximation of the measure on the partition of size $\delta $ as given
using the results in Subsection \ref{init} while the column "Refined..."
contains the $L^{1}$ error when we use the bootstrapping tecniques of
Subsection \ref{post}.

Once we have a good approximation of the invariant measure, we compute an
approximation of the Lyapunov exponent by computing an integral on a
partition of size $\delta_{\text{est}}$, using Section \ref{Lyap}; the
computed intervals enclosing rigorously the Lyapunov exponent are contained
in the last column. This allows to prove Theorem \ref{theo:final}.

\begin{table}[htbp]
\centering
\renewcommand{\arraystretch}{1.3} 
\scalebox{0.6}{
\begin{tabular}{c|c|c|c|c|c|c|c|c|c|c}
  \raisebox{5pt}{$\textrm{$\xi$, noise size}$} & \raisebox{5pt}{$\delta_{\textrm{contr}}$} &
  \raisebox{5pt}{$\alpha_{\textrm{contr}}$} & \raisebox{5pt}{$n_{\textrm{contr}}$} &
  \raisebox{5pt}{$\delta$} & \raisebox{5pt}{$\alpha$} &
  \raisebox{5pt}{$\sum_{i=0}^{n_{\text{contr}}} C_i$} & \shortstack{\emph{a priori} $L^1$ err.\\on measure} &
  \raisebox{5pt}{$\delta_{\textrm{est}}$} & \shortstack{refined $L^1$ err.\\on measure} &
  \shortstack{rig. estimate on\\Lypunov exponent} \\
 \hline
% noise	delta_contr	alpha_contr	n_contr	delta	alpha	sumCi	l1apriori	delta_est	l1err	lyap
$0.860\times 10^{-2} $ & $2^{-18}$ & $0.023$ & $56$ & $2^{-27}$ & $0.044$ & $29.54$ & $0.445\times 10^{-4} $ & $2^{-14}$ & $0.246\times 10^{-5} $ & $-6.03^{536}_{602}\times 10^{-1} $\\ 
$0.785\times 10^{-2} $ & $2^{-18}$ & $0.017$ & $58$ & $2^{-27}$ & $0.039$ & $29.20$ & $0.479\times 10^{-4} $ & $2^{-14}$ & $0.257\times 10^{-5} $ & $-5.499^{221}_{901}\times 10^{-1} $\\ 
$0.721\times 10^{-2} $ & $2^{-18}$ & $0.013$ & $60$ & $2^{-27}$ & $0.037$ & $28.95$ & $0.517\times 10^{-4} $ & $2^{-14}$ & $0.268\times 10^{-5} $ & $-4.985^{026}_{749}\times 10^{-1} $\\ 
$0.661\times 10^{-2} $ & $2^{-18}$ & $0.015$ & $60$ & $2^{-27}$ & $0.04$ & $28.75$ & $0.562\times 10^{-4} $ & $2^{-14}$ & $0.282\times 10^{-5} $ & $-4.467^{104}_{862}\times 10^{-1} $\\ 
$0.606\times 10^{-2} $ & $2^{-18}$ & $0.022$ & $54$ & $2^{-27}$ & $0.05$ & $28.44$ & $0.612\times 10^{-4} $ & $2^{-14}$ & $0.296\times 10^{-5} $ & $-3.95^{558}_{636}\times 10^{-1} $\\ 
$0.556\times 10^{-2} $ & $2^{-18}$ & $0.015$ & $58$ & $2^{-27}$ & $0.046$ & $28.77$ & $0.672\times 10^{-4} $ & $2^{-14}$ & $0.311\times 10^{-5} $ & $-3.462^{063}_{884}\times 10^{-1} $\\ 
$0.509\times 10^{-2} $ & $2^{-18}$ & $0.014$ & $60$ & $2^{-27}$ & $0.048$ & $29.19$ & $0.747\times 10^{-4} $ & $2^{-14}$ & $0.330\times 10^{-5} $ & $-2.97^{272}_{360}\times 10^{-1} $\\ 
$0.466\times 10^{-2} $ & $2^{-18}$ & $0.019$ & $59$ & $2^{-27}$ & $0.056$ & $29.56$ & $0.832\times 10^{-4} $ & $2^{-14}$ & $0.351\times 10^{-5} $ & $-2.52^{476}_{568}\times 10^{-1} $\\ 
$0.426\times 10^{-2} $ & $2^{-18}$ & $0.032$ & $53$ & $2^{-27}$ & $0.072$ & $29.70$ & $0.930\times 10^{-4} $ & $2^{-14}$ & $0.374\times 10^{-5} $ & $-2.10^{322}_{419}\times 10^{-1} $\\ 
$0.391\times 10^{-2} $ & $2^{-18}$ & $0.025$ & $57$ & $2^{-27}$ & $0.07$ & $30.68$ & $0.104\times 10^{-3} $ & $2^{-14}$ & $0.400\times 10^{-5} $ & $-1.74^{370}_{474}\times 10^{-1} $\\ 
$0.359\times 10^{-2} $ & $2^{-18}$ & $0.023$ & $60$ & $2^{-27}$ & $0.074$ & $31.73$ & $0.118\times 10^{-3} $ & $2^{-14}$ & $0.431\times 10^{-5} $ & $-1.42^{733}_{844}\times 10^{-1} $\\ 
$0.329\times 10^{-2} $ & $2^{-18}$ & $0.028$ & $60$ & $2^{-27}$ & $0.085$ & $32.63$ & $0.134\times 10^{-3} $ & $2^{-14}$ & $0.467\times 10^{-5} $ & $-1.1^{588}_{600}\times 10^{-1} $\\ 
$0.302\times 10^{-2} $ & $2^{-18}$ & $0.034$ & $60$ & $2^{-27}$ & $0.097$ & $33.55$ & $0.152\times 10^{-3} $ & $2^{-14}$ & $0.510\times 10^{-5} $ & $-9.^{399}_{411}\times 10^{-2} $\\ 
$0.277\times 10^{-2} $ & $2^{-18}$ & $0.044$ & $58$ & $2^{-27}$ & $0.11$ & $34.26$ & $0.173\times 10^{-3} $ & $2^{-14}$ & $0.560\times 10^{-5} $ & $-7.^{692}_{706}\times 10^{-2} $\\ 
$0.252\times 10^{-2} $ & $2^{-18}$ & $0.053$ & $56$ & $2^{-27}$ & $0.13$ & $35.04$ & $0.198\times 10^{-3} $ & $2^{-14}$ & $0.626\times 10^{-5} $ & $-6.3^{035}_{187}\times 10^{-2} $\\ 
$0.232\times 10^{-2} $ & $2^{-18}$ & $0.053$ & $56$ & $2^{-27}$ & $0.14$ & $35.94$ & $0.223\times 10^{-3} $ & $2^{-14}$ & $0.693\times 10^{-5} $ & $-5.4^{305}_{472}\times 10^{-2} $\\ 
$0.212\times 10^{-2} $ & $2^{-18}$ & $0.055$ & $56$ & $2^{-27}$ & $0.15$ & $36.88$ & $0.254\times 10^{-3} $ & $2^{-14}$ & $0.784\times 10^{-5} $ & $-4.7^{769}_{957}\times 10^{-2} $\\ 
$0.192\times 10^{-2} $ & $2^{-18}$ & $0.051$ & $58$ & $2^{-27}$ & $0.16$ & $38.21$ & $0.293\times 10^{-3} $ & $2^{-14}$ & $0.909\times 10^{-5} $ & $-4.3^{776}_{993}\times 10^{-2} $\\ 
$0.177\times 10^{-2} $ & $2^{-18}$ & $0.053$ & $58$ & $2^{-27}$ & $0.18$ & $38.98$ & $0.329\times 10^{-3} $ & $2^{-14}$ & $0.104\times 10^{-4} $ & $-4.2^{654}_{897}\times 10^{-2} $\\ 
$0.162\times 10^{-2} $ & $2^{-18}$ & $0.054$ & $58$ & $2^{-27}$ & $0.19$ & $39.82$ & $0.373\times 10^{-3} $ & $2^{-14}$ & $0.121\times 10^{-4} $ & $-4.3^{194}_{479}\times 10^{-2} $\\ 
$0.150\times 10^{-2} $ & $2^{-18}$ & $0.055$ & $58$ & $2^{-27}$ & $0.2$ & $40.53$ & $0.419\times 10^{-3} $ & $2^{-14}$ & $0.139\times 10^{-4} $ & $-4.^{488}_{521}\times 10^{-2} $\\ 
$0.137\times 10^{-2} $ & $2^{-18}$ & $0.063$ & $57$ & $2^{-27}$ & $0.23$ & $41.00$ & $0.476\times 10^{-3} $ & $2^{-14}$ & $0.164\times 10^{-4} $ & $-4.7^{613}_{992}\times 10^{-2} $\\ 
$0.125\times 10^{-2} $ & $2^{-18}$ & $0.066$ & $58$ & $2^{-27}$ & $0.25$ & $42.14$ & $0.554\times 10^{-3} $ & $2^{-14}$ & $0.200\times 10^{-4} $ & $-5.1^{293}_{750}\times 10^{-2} $\\ 
$0.115\times 10^{-2} $ & $2^{-18}$ & $0.071$ & $58$ & $2^{-27}$ & $0.27$ & $43.12$ & $0.636\times 10^{-3} $ & $2^{-14}$ & $0.239\times 10^{-4} $ & $-5.^{491}_{545}\times 10^{-2} $\\ 
$0.105\times 10^{-2} $ & $2^{-18}$ & $0.079$ & $58$ & $2^{-27}$ & $0.3$ & $44.28$ & $0.747\times 10^{-3} $ & $2^{-14}$ & $0.294\times 10^{-4} $ & $-5.9^{172}_{835}\times 10^{-2} $\\ 
$0.960\times 10^{-3} $ & $2^{-18}$ & $0.086$ & $58$ & $2^{-27}$ & $0.34$ & $45.41$ & $0.876\times 10^{-3} $ & $2^{-14}$ & $0.360\times 10^{-4} $ & $-6.^{346}_{427}\times 10^{-2} $\\ 
$0.885\times 10^{-3} $ & $2^{-18}$ & $0.092$ & $58$ & $2^{-27}$ & $0.37$ & $46.43$ & $0.102\times 10^{-2} $ & $2^{-14}$ & $0.436\times 10^{-4} $ & $-6.^{758}_{855}\times 10^{-2} $\\ 
$0.810\times 10^{-3} $ & $2^{-18}$ & $0.095$ & $58$ & $2^{-27}$ & $0.4$ & $47.44$ & $0.119\times 10^{-2} $ & $2^{-14}$ & $0.534\times 10^{-4} $ & $-7.^{200}_{320}\times 10^{-2} $\\ 
$0.748\times 10^{-3} $ & $2^{-18}$ & $0.098$ & $58$ & $2^{-27}$ & $0.43$ & $48.25$ & $0.138\times 10^{-2} $ & $2^{-14}$ & $0.643\times 10^{-4} $ & $-7.^{569}_{711}\times 10^{-2} $\\ 
$0.686\times 10^{-3} $ & $2^{-18}$ & $0.1$ & $57$ & $2^{-27}$ & $0.46$ & $48.65$ & $0.162\times 10^{-2} $ & $2^{-14}$ & $0.779\times 10^{-4} $ & $-^{7.888}_{8.061}\times 10^{-2} $\\ 
$0.623\times 10^{-3} $ & $2^{-18}$ & $0.099$ & $56$ & $2^{-27}$ & $0.5$ & $49.04$ & $0.192\times 10^{-2} $ & $2^{-14}$ & $0.956\times 10^{-4} $ & $-8.^{068}_{280}\times 10^{-2} $\\ 
$0.573\times 10^{-3} $ & $2^{-19}$ & $0.07$ & $60$ & $2^{-27}$ & $0.29$ & $46.78$ & $0.141\times 10^{-2} $ & $2^{-15}$ & $0.595\times 10^{-4} $ & $-8.^{076}_{209}\times 10^{-2} $\\ 
$0.524\times 10^{-3} $ & $2^{-19}$ & $0.085$ & $56$ & $2^{-27}$ & $0.33$ & $46.32$ & $0.162\times 10^{-2} $ & $2^{-15}$ & $0.696\times 10^{-4} $ & $-7.^{784}_{940}\times 10^{-2} $\\ 
$0.480\times 10^{-3} $ & $2^{-19}$ & $0.07$ & $58$ & $2^{-27}$ & $0.34$ & $47.75$ & $0.185\times 10^{-2} $ & $2^{-15}$ & $0.800\times 10^{-4} $ & $-7.^{312}_{490}\times 10^{-2} $\\ 
$0.436\times 10^{-3} $ & $2^{-19}$ & $0.066$ & $58$ & $2^{-27}$ & $0.36$ & $48.66$ & $0.215\times 10^{-2} $ & $2^{-15}$ & $0.935\times 10^{-4} $ & $-6.^{653}_{861}\times 10^{-2} $\\ 
$0.399\times 10^{-3} $ & $2^{-19}$ & $0.06$ & $60$ & $2^{-27}$ & $0.39$ & $50.39$ & $0.254\times 10^{-2} $ & $2^{-15}$ & $0.110\times 10^{-3} $ & $-^{5.964}_{6.208}\times 10^{-2} $\\ 
$0.368\times 10^{-3} $ & $2^{-19}$ & $0.065$ & $60$ & $2^{-27}$ & $0.43$ & $51.36$ & $0.299\times 10^{-2} $ & $2^{-15}$ & $0.128\times 10^{-3} $ & $-5.^{339}_{623}\times 10^{-2} $\\ 
$0.343\times 10^{-3} $ & $2^{-20}$ & $0.041$ & $65$ & $2^{-27}$ & $0.24$ & $49.27$ & $0.232\times 10^{-2} $ & $2^{-16}$ & $0.976\times 10^{-4} $ & $-^{4.871}_{5.086}\times 10^{-2} $\\ 
$0.312\times 10^{-3} $ & $2^{-20}$ & $0.044$ & $65$ & $2^{-27}$ & $0.26$ & $50.33$ & $0.270\times 10^{-2} $ & $2^{-16}$ & $0.112\times 10^{-3} $ & $-4.^{189}_{436}\times 10^{-2} $\\ 
$0.287\times 10^{-3} $ & $2^{-20}$ & $0.048$ & $65$ & $2^{-27}$ & $0.29$ & $51.29$ & $0.309\times 10^{-2} $ & $2^{-16}$ & $0.127\times 10^{-3} $ & $-3.^{565}_{844}\times 10^{-2} $\\ 
$0.259\times 10^{-3} $ & $2^{-20}$ & $0.042$ & $67$ & $2^{-27}$ & $0.31$ & $53.14$ & $0.368\times 10^{-2} $ & $2^{-16}$ & $0.149\times 10^{-3} $ & $-2.^{659}_{984}\times 10^{-2} $\\ 
$0.237\times 10^{-3} $ & $2^{-20}$ & $0.046$ & $67$ & $2^{-27}$ & $0.35$ & $54.32$ & $0.431\times 10^{-2} $ & $2^{-17}$ & $0.144\times 10^{-3} $ & $-1.^{686}_{998}\times 10^{-2} $\\ 
$0.218\times 10^{-3} $ & $2^{-20}$ & $0.05$ & $67$ & $2^{-27}$ & $0.38$ & $55.49$ & $0.504\times 10^{-2} $ & $2^{-17}$ & $0.165\times 10^{-3} $ & $-^{5.731}_{9.274}\times 10^{-3} $\\ 
$0.199\times 10^{-3} $ & $2^{-20}$ & $0.051$ & $68$ & $2^{-27}$ & $0.42$ & $57.28$ & $0.606\times 10^{-2} $ & $2^{-17}$ & $0.193\times 10^{-3} $ & $^{7.138}_{2.988}\times 10^{-3} $\\ 
$0.184\times 10^{-3} $ & $2^{-20}$ & $0.051$ & $69$ & $2^{-27}$ & $0.46$ & $59.07$ & $0.725\times 10^{-2} $ & $2^{-17}$ & $0.225\times 10^{-3} $ & $1.^{844}_{362}\times 10^{-2} $\\ 
$0.168\times 10^{-3} $ & $2^{-20}$ & $0.053$ & $70$ & $2^{-27}$ & $0.5$ & $61.10$ & $0.896\times 10^{-2} $ & $2^{-17}$ & $0.272\times 10^{-3} $ & $2.^{977}_{394}\times 10^{-2} $\\ 
$0.154\times 10^{-3} $ & $2^{-21}$ & $0.042$ & $74$ & $2^{-27}$ & $0.29$ & $57.78$ & $0.650\times 10^{-2} $ & $2^{-18}$ & $0.708\times 10^{-4} $ & $3.^{689}_{533}\times 10^{-2} $\\ 
$0.142\times 10^{-3} $ & $2^{-21}$ & $0.048$ & $74$ & $2^{-27}$ & $0.32$ & $59.11$ & $0.757\times 10^{-2} $ & $2^{-18}$ & $0.822\times 10^{-4} $ & $4.^{462}_{282}\times 10^{-2} $\\ 
$0.129\times 10^{-3} $ & $2^{-21}$ & $0.049$ & $75$ & $2^{-27}$ & $0.36$ & $60.98$ & $0.901\times 10^{-2} $ & $2^{-18}$ & $0.988\times 10^{-4} $ & $5.^{239}_{023}\times 10^{-2} $\\ 
$0.106\times 10^{-3} $ & $2^{-21}$ & $0.058$ & $75$ & $2^{-27}$ & $0.45$ & $64.34$ & $0.135\times 10^{-1} $ & $2^{-18}$ & $0.154\times 10^{-3} $ & $6.^{965}_{626}\times 10^{-2} $\\ 
$0.873\times 10^{-4} $ & $2^{-21}$ & $0.062$ & $75$ & $2^{-27}$ & $0.55$ & $67.55$ & $0.209\times 10^{-1} $ & $2^{-18}$ & $0.252\times 10^{-3} $ & $8.^{917}_{365}\times 10^{-2} $\\ 

\end{tabular}
} \renewcommand{\arraystretch}{1}
\caption{The input and output of our computer aided estimates. In particular
the first column shows the size of the noise, the fifth column shows the
size of the approximation grid, the last column shows intervals enclosing
the exact value for the Lyapunov exponent related to the noise size in the
first column. See Section \protect\ref{compresults} for explanations on all
the other values.}
\label{t1}
\end{table}

\begin{proof}[Proof of Items I1 and I2 of Theorem \protect\ref{theo:final}]
We refer to the values listed in Table \ref{t1}. For a noise size of $\xi
_{1}=0.873\times 10^{-4}$. By the results of Sections 3,4,5 our algorithm
certifies that the Lyapunov exponent $\lambda _{\xi _{1}}\in \lbrack
8.365\times 10^{-2},8.917\times 10^{-2}]$. In particular, this proves that $%
\lambda _{\xi _{1}}>0$.

For a noise size of $\xi_2=0.860\times 10^{-2}$ our algorithm certifies that
the Lyapunov exponent $\lambda_{\xi_2}\in [-6.03602\times
10^{-1},-6.03536\times 10^{-1}]$. In particular, this proves that $%
\lambda_{\xi_2}<0$.
\end{proof}

We remark that in the case of random diffeomorphisms there exists a
dychotomy \cite{LY-Lyapunov}: if the Lyapunov exponent is positive the
system admits a \textit{random strange attractor} (chaotic behaviour) while
if the Lyapunov exponent is negative the system has a \textit{random sink}
(regular behaviour).

%%% Local Variables: 
%%% mode: latex
%%% TeX-master: "noise_induced_order"
%%% End: 

%\end{document}

\section{Quantitative stability of the system and of the Lyapunov exponent.}\label{sec7}

%\begin{document}

In this section we study of the regularity of $\lambda _{\xi }$ as a
function of $\xi $ and prove that this varies $\alpha -$Holder  continuously
for every $\alpha <1$. We start showing a simple Lipschitz stability result
for the fixed point of a Markov operator. This will show that the stationary
measure is Lipschitz stable in $L^{1}$ when the noise amplitude change. This
is not sufficient to deduce that the Lyapunov exponent is Lipschitz stable,
because as we have seen the Lyapunov exponent is the average of an
observable which is unbounded. In this case a control in $L^{1}$ of the
stationary measure is not sufficient to control its average. For this reason
we strengthen the estimates to an  $L^{p}$ quantitative stability statement
with $p>1$ which is sufficient to control the average of our observable.

\subsection{Lipschitz stability of the stationary measure}

A quantitative stability statement for the stationary measure follows from a
general and elementary lemma about perturbations of Markov operators:

\begin{lemma}
\label{gen.}Let $L_{1},L_{2}:L^{1}\rightarrow L^{1}$ be two Markov
operators. Assume 
\begin{equation*}
\Vert L_{1}^{i}|_{V}\Vert _{L^{1}\rightarrow L^{1}}\leq C_{i}
\end{equation*}%
and suppose that $C_{N}<1$ for a certain $N$. Let $f_{i}$ be a fixed
probability measure of $L_{i}$, then $\sum_{k=0}^{N-1}C_{i}<\infty $ and 
\begin{equation}
\Vert f_{2}-f_{1}\Vert _{L^{1}}\leq \frac{\sum_{k=0}^{N-1}C_{i}}{1-C_{N}}%
\cdot \Vert L_{1}-L_{2}\Vert _{L^{1}}.  \label{eqstab}
\end{equation}
\end{lemma}

\begin{proof}
The existence of $N$such that $C_{N}<1$ easily implies that $C_{i}$
decreases exponentially and $\sum_{k=0}^{N-1}C_{i}<\infty $. Since $%
f_{0},f_{1}$ are fixed probability measures 
\begin{eqnarray*}
\Vert f_{2}-f_{1}\Vert _{L^{1}} &\leq &\Vert
L_{2}^{N}f_{2}-L_{1}^{N}f_{1}\Vert _{L^{1}} \\
&\leq &\Vert L_{2}^{N}f_{2}-L_{1}^{N}f_{2}\Vert _{L^{1}}+\Vert
L_{1}^{N}f_{2}-L_{1}^{N}f_{1}\Vert _{L^{1}} \\
&\leq &\Vert L_{1}^{N}(f_{2}-f_{1})\Vert _{L^{1}}+\Vert
L_{2}^{N}f_{2}-L_{1}^{N}f_{2}\Vert _{L^{1}}.
\end{eqnarray*}%
Since $C_{i}\rightarrow 0$, and $f_{2}-f_{1}\in V$, we can chose $N$ such
that $C_{N}<1$, we have $\Vert L_{1}^{N}(f_{2}-f_{1})\Vert _{L^{1}}\leq
C_{N}\Vert f_{2}-f_{1}\Vert _{L^{1}}$ and 
\begin{equation*}
\Vert f_{2}-f_{1}\Vert _{L^{1}}\leq \frac{\Vert
L_{2}^{N}f_{2}-L_{1}^{N}f_{2}\Vert _{L^{1}}}{1-C_{N}}.
\end{equation*}%
Let us now consider the term $\Vert L_{2}^{N}f_{2}-L_{1}^{N}f_{2}\Vert
_{L^{1}}$. Since%
\begin{equation*}
(L_{1}^{N}-L_{2}^{N})=\sum_{k=1}^{N}L_{1}^{N-k}(L_{1}-L_{2})L_{2}^{k-1}
\end{equation*}%
then%
\begin{eqnarray*}
-(L_{2}^{N}-L_{1}^{N})f_{2}
&=&\sum_{k=1}^{N}L_{1}^{N-k}(L_{1}-L_{2})L_{2}^{k-1}f_{2} \\
&=&\sum_{k=1}^{N}L_{1}^{N-k}(L_{1}-L_{2})f_{2}
\end{eqnarray*}%
and we have the statement.
\end{proof}

\subsubsection{Stability of the measure under perturbation of the noise}

\label{subsec:L1noise}

Now let us see that the stationary measure also varies in a Lipschitz way
with respect to perturbations of the noise: suppose $T$ satisfies Setting %
\ref{set2} and let $\rho _{1}$ and $\rho_{2}$ be two bounded variation noise
kernels and associated transfer operators $L_{i}(f)=\rho_{i}\hat{\ast}%
L_{T}(f)$ for $i=1,2 $.

\begin{lemma}
\label{noise_perturb} Let $L_i$ be defined as above, then 
\begin{equation*}
\|(L_{1}-L_{2})f\|_{L^1}\leq \|\rho _{1}-\rho _{2}\|_{L^1}\cdot \|f\|_{L^1}
\end{equation*}
\end{lemma}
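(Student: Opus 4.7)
The plan is to reduce the estimate to standard properties of ordinary convolution on $\mathbb{R}$ combined with the fact that both the pushforward by the reflection map $\pi$ and the deterministic transfer operator $L_T$ are $L^1$ contractions. The key observation is that the ``reflecting boundaries'' convolution $\hat{\ast}$ is linear in the first argument, so that
\[
(L_1 - L_2) f \;=\; \rho_1 \hat{\ast} L_T(f) \;-\; \rho_2 \hat{\ast} L_T(f) \;=\; (\rho_1 - \rho_2) \hat{\ast} L_T(f).
\]
This linearity follows immediately from Definition \ref{def:hatconv}, since the extension operator $g \mapsto \hat g = 1_{[0,1]} g$, the ordinary convolution on $\mathbb{R}$, and the pushforward $\pi^\ast$ are all linear.

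Next I would chain three standard inequalities. First, by Definition \ref{def:hatconv}, $(\rho_1-\rho_2)\hat{\ast} L_T f = \pi^\ast\!\bigl((\widehat{\rho_1-\rho_2}) \ast \widehat{L_T f}\bigr)$, and since $\pi^\ast$ is a pushforward of measures it is an $L^1$ contraction:
\[
\bigl\|(\rho_1-\rho_2)\hat{\ast} L_T f\bigr\|_{L^1([0,1])} \le \bigl\|(\widehat{\rho_1-\rho_2}) \ast \widehat{L_T f}\bigr\|_{L^1(\mathbb{R})}.
\]
Second, by Young's inequality for ordinary convolution on $\mathbb{R}$,
\[
\bigl\|(\widehat{\rho_1-\rho_2}) \ast \widehat{L_T f}\bigr\|_{L^1(\mathbb{R})} \le \|\rho_1-\rho_2\|_{L^1}\cdot \|L_T f\|_{L^1}.
\]
Third, the deterministic transfer operator $L_T$ is an $L^1$ contraction on signed measures, so $\|L_T f\|_{L^1} \le \|f\|_{L^1}$. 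Combining the three bounds yields exactly the claim.

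There is essentially no hard step here; the only point worth being careful about is the verification that $\pi^\ast$ does not increase the $L^1$ norm, which follows from the general fact that pushforward by any measurable map is a contraction on the space of signed measures with the total variation norm, applied to $f \, dx$ viewed as an element of $SM(\mathbb{R})$. Everything else is routine algebra and Young's inequality.
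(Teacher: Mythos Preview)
Your proof is correct and follows exactly the same route as the paper's: write $(L_1-L_2)f=(\rho_1-\rho_2)\hat{\ast}L_T f$ by linearity, then bound by $\|\rho_1-\rho_2\|_{L^1}\|f\|_{L^1}$ using the $L^1$ contraction properties of $\pi^\ast$, ordinary convolution (Young), and $L_T$. The paper compresses all of this into two lines; you have simply unpacked those lines with the appropriate justifications.
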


\begin{proof}
Indeed, \belowdisplayskip=-12pt %to have the qedhere at the right place
\begin{eqnarray*}
\Vert (L_{1}-L_{2})f\Vert _{L^{1}} &\leq &\Vert \lbrack \rho _{0}-\rho _{1}]%
\hat{\ast}L_{T}(f)\Vert \\
&\leq &\Vert \rho _{1}-\rho _{2}\Vert _{L^{1}}\cdot \Vert f\Vert _{L^{1}}.
\end{eqnarray*}%
\qedhere
\end{proof}

From this and the classical $L^{p}$ interpolation inequality, in the case of
dynamical systems with additive noise we get the following $L^{p}$ stability
estimate

\begin{corollary}
\label{cornovo}Let $L_{1},L_{2}:L^{1}\rightarrow L^{1}$ be two transfer
operators of deterministic systems with additive noise $L_{i}(f)=\rho _{i}%
\hat{\ast}L_{T}(f)$ for $i=1,2$ . Assume 
\begin{equation*}
\Vert L_{1}^{i}|_{V}\Vert _{L^{1}\rightarrow L^{1}}\leq C_{i}
\end{equation*}%
and suppose that $C_{N}<1$ for a certain $N$. Let $f_{i}$ be a fixed
probability measure of $L_{i}$ and Suppose $1\leq r<\infty .$ Then 
\begin{equation}
\Vert f_{2}-f_{1}\Vert _{L^{r}}\leq (\frac{\sum_{k=0}^{N-1}C_{i}}{1-C_{N}}%
\cdot \Vert \rho _{1}-\rho _{2}\Vert _{L^{1}})^{\frac{1}{r}}(2\max (||\rho
_{1}||_{BV},||\rho _{2}||_{BV}))^{1-\frac{1}{r}}.  \label{stablp}
\end{equation}
\end{corollary}

\begin{proof}
We get that for $i=1$ and $i=2$, $\Vert f_{i}\Vert _{L^{\infty }}\leq \Vert
f_{i}\Vert _{BV}\leq \max (||\rho _{1}||_{BV},||\rho _{2}||_{BV}).$ Suppose $%
1\leq r<\infty $ and $u\in L^{1}\cap L^{\infty }$, \ the classical $L^{p}$
interpolation inequality implies that $u\in L^{r}$ and%
\begin{equation*}
||u||_{L^{r}}\leq ||u||_{L^{1}}^{\frac{1}{r}}||u||_{L^{\infty }}^{1-\frac{1}{%
r}}.
\end{equation*}%
Applying this to $($\ref{eqstab}$)$%
\begin{equation}
\Vert f_{2}-f_{1}\Vert _{L^{r}}\leq (\frac{\sum_{k=0}^{N-1}C_{i}}{1-C_{N}}%
\cdot \Vert L_{1}-L_{2}\Vert _{L^{1}})^{\frac{1}{r}}(2\max (||\rho
_{1}||_{BV},||\rho _{2}||_{BV}))^{1-\frac{1}{r}}.
\end{equation}%
using the estimate for $\Vert L_{1}-L_{2}\Vert _{L^{1}}$ given in Lemma \ref%
{noise_perturb} we get $(\ref{stablp}).$
\end{proof}

\begin{lemma}
\label{rho_xi_ineq} Suppose $\rho _{1},\rho _{2}$ are the uniform kernel $%
\rho _{1}=\xi ^{-1}1_{[-\xi /2,\xi /2]}$, $\rho _{2}=\tilde{\xi}^{-1}1_{[-%
\tilde{\xi}/2,\tilde{\xi}/2]}$ we have 
\begin{equation*}
\Vert \rho _{1}-\rho _{2}\Vert _{L^{1}}\leq \frac{2}{\max \{\xi ,\tilde{\xi}%
\}}|\xi -\tilde{\xi}|.
\end{equation*}
\end{lemma}

\begin{proof}
Indeed, \belowdisplayskip=-12pt %to have the qedhere at the right place
\begin{align*}
\Vert \rho _{1}-\rho _{2}\Vert _{L^{1}}\leq & \min \{\xi ,\tilde{\xi}%
\}\left\vert \frac{1}{\xi }-\frac{1}{\tilde{\xi}}\right\vert +\frac{|\xi -%
\tilde{\xi}|}{\max \{\xi ,\tilde{\xi}\}} \\
=& \frac{2|\xi -\tilde{\xi}|}{\max \{\xi ,\tilde{\xi}\}}.
\end{align*}%
\qedhere
\end{proof}

If $\xi ,\tilde{\xi}\geq \xi _{1}=\frac{8.73}{10^{5}}$ (see Theorem \ref%
{theo:final}) we get $\Vert \rho _{1}-\rho _{2}\Vert _{L^{1}}\leq \frac{%
2\times 10^{5}}{8.73}|\xi -\tilde{\xi}|$ and putting this in $($\ref{stablp}$)$
considering \ that in this case $||\rho _{i}||_{BV}\leq \frac{4 10^{5}}{%
8.73}+1$ we get%
\begin{equation}
\Vert f_{2}-f_{1}\Vert _{L^{r}}\leq (\frac{\sum_{k=0}^{N-1}C_{i}}{1-C_{N}}%
\cdot \frac{2\times 10^{5}}{8.73}|\xi -\tilde{\xi}|)^{\frac{1}{r}}(\frac{4\times
10^{5}}{8.73}+1)^{1-\frac{1}{r}}.  \label{withxi}
\end{equation}

We recall that the numbers $C_{i}$ \ represents the contraction rate of one
of the two operators. We show that even for these quantities we can have an
uniform estimate when $\xi ,\tilde{\xi}\geq \xi _{1}$. \ The following
result allows to estimate the $C_{i}$ constants (such that $||L_{\xi
}^{i}|_{V}||_{1}\leq C_{i}$) when the amplitude of the noise is increased.

\begin{lemma}
\label{mixatutto} \label{mono}Let $\rho _{\xi }=\xi ^{-1}1_{[-\xi /2,\xi
/2]} $ and $N_{\xi }$ the associated noise operator. If $\hat{\xi}>\xi $ and 
$||(N_{\xi }L)^{i}||_{V\rightarrow L^{1}}\leq C_{i}<1$, then

\begin{equation}
||(N_{\hat{\xi}}L)^{i}||_{V\rightarrow L^{1}}\leq C_{i}(\xi /\hat{\xi})^{i}+%
\left[ 1-(\xi /\hat{\xi})^{i}\right] <1.  \label{ec}
\end{equation}
\end{lemma}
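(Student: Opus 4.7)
The plan is to express the larger noise as a convex combination of the smaller noise and a residual noise, and then expand the $i$-th power of the convex combination and use the contraction hypothesis only on the all-$N_\xi$ term.

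\textbf{Step 1 (Decomposition of the noise kernel).} Setting $\alpha := \xi/\hat\xi \in (0,1)$, I would first verify the pointwise identity
\[
\rho_{\hat\xi} \;=\; \alpha \,\rho_\xi \;+\; (1-\alpha)\, g,
\qquad
g(x) := \frac{1}{\hat\xi - \xi}\, \mathbf{1}_{[-\hat\xi/2,\hat\xi/2]\setminus [-\xi/2,\xi/2]}(x).
\]
This is a direct computation: on $[-\xi/2,\xi/2]$ we have $\alpha\rho_\xi = \alpha/\xi = 1/\hat\xi = \rho_{\hat\xi}$, and on $[-\hat\xi/2,\hat\xi/2]\setminus[-\xi/2,\xi/2]$ the term $(1-\alpha)g$ equals $1/\hat\xi = \rho_{\hat\xi}$. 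Observe that $g$ is a bona fide probability density.

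\textbf{Step 2 (Decomposition of the noise operator).} By linearity of the reflecting boundary convolution $\hat\ast$ (Definition \ref{def:hatconv}), the kernel decomposition transfers immediately to the operator:
\[
N_{\hat\xi} \;=\; \alpha\, N_\xi \;+\; (1-\alpha)\, N_g,
\]
where $N_g f := g\,\hat\ast\, f$. Since $g$ is a probability density, $N_g$ is Markov, i.e. $\|N_g\|_{L^1\to L^1}\leq 1$ and $N_g$ preserves integrals, so $N_g L: V \to V$ with operator norm at most $1$.

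\textbf{Step 3 (Multinomial expansion).} The main computation is to expand
\[
(N_{\hat\xi} L)^i \;=\; \bigl(\alpha N_\xi L + (1-\alpha)\, N_g L\bigr)^i \;=\; \sum_{S\subseteq\{1,\dots,i\}} \alpha^{|S|}(1-\alpha)^{i-|S|}\, A_S,
\]
where $A_S$ is the ordered product $B_1 B_2\cdots B_i$ with $B_j = N_\xi L$ if $j\in S$ and $B_j = N_g L$ otherwise. Each $A_S$ sends $V$ into $V$, since both $N_\xi L$ and $N_g L$ do.

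\textbf{Step 4 (Term-by-term bounds).} I would estimate the summands in two groups:
\begin{itemize}
\item The single term $S=\{1,\dots,i\}$ gives $A_S = (N_\xi L)^i$, hence by hypothesis $\|A_S|_V\|_{L^1\to L^1}\leq C_i$, contributing $\alpha^i C_i$.
\item For every other $S$, at least one factor is $N_g L$, and each factor is a Markov operator preserving $V$, so $\|A_S|_V\|_{L^1\to L^1}\leq 1$.
\end{itemize}
Using $\sum_{S\subseteq\{1,\dots,i\}}\alpha^{|S|}(1-\alpha)^{i-|S|} = 1$, the residual coefficients sum to $1-\alpha^i$. The triangle inequality yields
\[
\bigl\|(N_{\hat\xi} L)^i\big|_V\bigr\|_{L^1\to L^1} \;\leq\; \alpha^i C_i + (1-\alpha^i) \;=\; C_i\bigl(\xi/\hat\xi\bigr)^{i} + \bigl[1 - (\xi/\hat\xi)^{i}\bigr],
\]
which is precisely \eqref{ec}.

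The only non-routine step is Step 1, i.e.\ noticing that uniform kernels enjoy the very clean convex decomposition with $\alpha=\xi/\hat\xi$; once this is in place, the rest is bookkeeping with Markov operators.
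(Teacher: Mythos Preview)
Your proof is correct and follows essentially the same approach as the paper: both decompose $\rho_{\hat\xi}$ as the convex combination $\alpha\rho_\xi + (1-\alpha)g$ with $\alpha=\xi/\hat\xi$, pass to the operator level, expand $(N_{\hat\xi}L)^i$, and bound the all-$N_\xi$ term by $C_i$ and the remaining terms by $1$. The only cosmetic difference is that the paper packages the non-principal terms into a single Markov operator $Q$ (so that $(N_{\hat\xi}L)^i = \alpha^i(N_\xi L)^i + (1-\alpha^i)Q$), whereas you write out the full multinomial expansion and bound each $A_S$ separately; the content is identical.
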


\begin{proof}
Let $\hat\xi=\xi +\epsilon $ and $\rho_{\xi }=\xi^{-1}1_{[-\xi/2,\xi/2]}$,
we have 
\begin{align*}
\rho _{\xi+\epsilon }&=(\xi+\epsilon)^{-1} 1_{[-(\xi+\epsilon)/2
,(\xi+\epsilon)/2]} \\
=& \frac{\xi}{\xi+\epsilon}\rho_\xi + \frac{\epsilon}{\xi+\epsilon}\cdot 
\frac{1}{\epsilon} 1_{[-(\xi+\epsilon)/2,-\xi/2]\cup[\xi/2,(\xi+\epsilon)/2]}
\end{align*}

Therefore 
\begin{equation*}
N_{\xi+\epsilon}=\frac{\xi}{\xi+\epsilon}N_{\xi }+\frac{\epsilon}{%
\xi+\epsilon}M
\end{equation*}
where $M$ is the Markov operator of convolution with $\epsilon^{-1}
1_{[-(\xi+\epsilon)/2,-\xi/2]\cup[\xi/2,(\xi+\epsilon)/2]}$.

In the same way we have that $N_{\hat\xi}L$ is a convex combination of the
Markov operators $N_\xi L$ and $ML$ with coefficients $\xi/\hat\xi$ and $%
1-\xi/\hat\xi$, and 
\begin{equation*}
(N_{\hat\xi}L)^i = (\xi/\hat\xi)^i (N_\xi L)^i + \left[1 - (\xi/\hat\xi)^i %
\right] Q
\end{equation*}
for a suitable Markov operator $Q$ formed by the remaining terms of the
expansion. Considering the $L^1$ norm the inequality follows.
\end{proof}

We remark that when $C_i<1 $ it holds that the right hand of \eqref{ec} is
smaller than 1, by this we have immediately the following corollary

\begin{corollary}
\label{cormix} Let $\rho _{\xi }$ and $N_{\xi }$ as above. Suppose $%
||(N_{\xi }L)^{i}||_{V\rightarrow L^{1}}<1$, then for each $\xi \leq \hat{\xi%
}\leq 1$ it holds 
\begin{equation*}
||(N_{\hat{\xi}}L)^{i}||_{V\rightarrow L^{1}}<1
\end{equation*}%
and the system is mixing for every noise greater than $\xi $.
\end{corollary}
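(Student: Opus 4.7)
The plan is to read this corollary as an immediate consequence of the quantitative estimate already established in Lemma \ref{mixatutto}. That lemma gives, for every $\hat\xi>\xi$,
\[
\|(N_{\hat\xi}L)^{i}\|_{V\to L^{1}}\leq C_i\,(\xi/\hat\xi)^{i}+\bigl[1-(\xi/\hat\xi)^{i}\bigr].
\]
So the first step is simply to apply this inequality with the hypothesis $C_i<1$. Rewriting the right-hand side as $1-(1-C_i)(\xi/\hat\xi)^{i}$ makes it transparent: since $1-C_i>0$ and $(\xi/\hat\xi)^{i}\in(0,1)$, the quantity is strictly less than $1$. This proves the first assertion, that $\|(N_{\hat\xi}L)^{i}\|_{V\to L^{1}}<1$ for every $\hat\xi>\xi$.

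For the second assertion (mixing for every noise amplitude greater than $\xi$) the plan is to show that strict contraction of some iterate of $L_{\hat\xi}$ on the zero-average subspace $V$ implies convergence to equilibrium in $L^1$, which is a standard consequence. More precisely, setting $\alpha:=1-(1-C_i)(\xi/\hat\xi)^{i}<1$, for any two probability densities $f,g\in L^1$ their difference $f-g$ lies in $V$, hence
\[
\|L_{\hat\xi}^{ni}(f-g)\|_{L^1}\leq \alpha^{n}\|f-g\|_{L^1}\xrightarrow[n\to\infty]{}0,
\]
and boundedness of $\|L_{\hat\xi}\|_{L^1\to L^1}\leq 1$ interpolates across the remaining intermediate powers. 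Applied to $g=f_{\hat\xi}$ the stationary density (whose existence is the object of Proposition~\ref{ergo}), this yields $L_{\hat\xi}^{n}f\to f_{\hat\xi}$ in $L^1$ for every probability density $f$, i.e., the system with noise amplitude $\hat\xi$ is mixing in the annealed sense.

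There is essentially no obstacle: the whole corollary is a two-line consequence of Lemma \ref{mixatutto}, with the only mildly non-trivial piece being the translation of ``contraction of an iterate on $V$'' into ``mixing''. I would therefore keep the write-up very short, state the convex-combination inequality explicitly, deduce strict contraction, and then invoke the standard argument that a power-contractive Markov operator on $V$ forces convergence to the unique stationary density.
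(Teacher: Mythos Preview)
Your argument is correct and matches the paper's own approach: the paper simply remarks that when $C_i<1$ the right-hand side of \eqref{ec} is smaller than $1$, and states the corollary as immediate. Your rewriting of the bound as $1-(1-C_i)(\xi/\hat\xi)^i$ is exactly the right way to make this transparent.

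One small caveat: you invoke Proposition~\ref{ergo} for the existence of the stationary density $f_{\hat\xi}$, but in the paper Proposition~\ref{ergo} itself relies on Corollary~\ref{cormix}, so this is circular. The fix is easy: existence of a stationary probability density follows either from the general compactness argument (cited in the paper as \cite[Proposition~5.5]{Viana}) or directly from the fact that $N_{\hat\xi}$ maps $L^1$ into $BV$, so the sequence $\frac{1}{n}\sum_{k=0}^{n-1}L_{\hat\xi}^k\mathbf{1}$ is bounded in $BV$ and has an $L^1$-convergent subsequence whose limit is fixed by $L_{\hat\xi}$. Uniqueness is already contained in your contraction argument.
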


By continuity of the above estimates when $\hat{\xi}$ varies\ and
compactness of $[\xi _{1},1]$ if follows that $\sum_{k=0}^{N-1}C_{i}$ and $%
C_{N}$ have an uniform bound on $[\xi _{1},1]$ and then for each $r\geq 1$ \
there is $C\geq 0$ such that for each $\xi -\tilde{\xi}\in \lbrack \xi
_{1},1]$ 
\begin{equation}
\Vert f_{2}-f_{1}\Vert _{L^{r}}\leq C|\xi -\tilde{\xi}|^{\frac{1}{r}}
\label{holdermeas}
\end{equation}%
proving the H\H{o}lder stability of the stationary measure in $L^{r}.$

\subsection{Stability of the Lyapunov exponent}

Now we see how from the H\H{o}lder stability of the stationary measure
proved in the previous section we can easilly deduce the H\H{o}lder
stability of the Lyapunov exponent $\lambda _{\xi }$ and the H\H{o}lder
continuity of $\lambda _{\xi }$ as $\xi $ varies. \ We recall that $\lambda
_{\xi }:=\int_{0}^{1}\phi (x)d\mu _{\xi }$ where $\phi (x)=\log |T^{\prime
}(x)|.$

\begin{corollary}
\label{holdlyapcor}For each $r>1$ we have that \ $\phi \in L^{\frac{r}{r-1}}$
and there is $C\geq 0$ such that for each $\xi ,\tilde{\xi}\in \lbrack \xi
_{1},1]$,%
\begin{equation*}
|\lambda _{\xi }-\lambda _{\tilde{\xi}}|\leq ||\phi ||_{L^{\frac{r}{r-1}%
}}C|\xi -\tilde{\xi}|^{\frac{1}{r}}.
\end{equation*}
\end{corollary}

\begin{proof}
We can get an explitic formla for $\phi $, indeed%
\begin{equation*}
T^{\prime }(x)=\left\{ 
\begin{array}{c}
-\frac{1}{6}\frac{e^{-x}}{\left( 8x-1\right) ^{\frac{2}{3}}}\left(
24x+6a\left( 8x-1\right) ^{\frac{2}{3}}-11\right) ~~~~~~0\leq x\leq 0.3 \\ 
-\frac{19\times 10^{19}}{3}cx^{18}e^{-\frac{190}{3}x}\left( 10x-3\right)
~~~0.3<x\leq 1%
\end{array}%
\right. 
\end{equation*}%
by this $\phi \in L^{p}[0,1]$ for each $p\in \lbrack 1,\infty )$ and then by
the holder inequality%
\begin{eqnarray*}
|\lambda _{\xi }-\lambda _{\tilde{\xi}}| &\leq &\int_{0}^{1}\phi
(x)[f_{1}-f_{2}]dm \\
&\leq &||\phi ||_{L^{\frac{r}{r-1}}}\Vert f_{2}-f_{1}\Vert _{L^{r}}
\end{eqnarray*}%
from which the statemet follows applying $($\ref{holdermeas}$).$
\end{proof}

%\end{document}

%\section{A toy model of noise induced order}

%\input{nio_section8.tex}

\section{Appendix: Generalities, ergodicity and Lyapunov exponents in
Random  Dynamics}

%\begin{document}

\label{sec:rds} In this section we recall some basic results and definitions
in the ergodic theory of random transformations and the integral formula for
the Lyapunov exponent. We refer to \cite[Chapter 5]{Viana}; another
classical reference is \cite{Ar}.

Let $X$ be the interval $[-\xi,\xi]$ endowed with the Borel $\sigma$-algebra
and $p$ the uniform probability on $X$; let $\Xi=X^{\mathbb{N}}$ the space
of sequences with values in this space endowed with the product $\sigma$%
-algebra $\Omega$ and the product measure $\mathbb{P}=p^{\mathbb{N}}$. Let $%
\phi$ be the shift acting on $\Xi$.

We endow the interval $[0,1]$ with the Borel $\sigma$-algebra $\beta$ and we
define the measurable skew product: 
\begin{equation*}
F:(\Xi,\Omega)\times([0,1],\beta)\to (\Xi,\Omega)\times ([0,1],\beta)\quad
F(\omega,x)=(\phi (\omega ), T(x)+(\omega)_0).
\end{equation*}

This skew product models the evolution of the stochastic process 
\begin{equation*}
X_{n+1}=T(X_{n})+\omega _{n}.
\end{equation*}%
where $\omega _{n}$ is a sequence of i.i.d. random variables uniformly
distributed in $[-\xi ,\xi ]$ endowed with the Borel $\sigma $-algebra.

In the following, let $L_{\xi}$ be the annealed transfer operator as defined
in Section \ref{map}. This operator embodies how measures behave ``in
average'' under the action of the random dynamical system; let $\mu$ be a
measure on $[0,1]$ and $T_{\epsilon}(x)=T(x)+\epsilon$, we have that 
\begin{equation*}
L_{\xi}\mu(B)=\int \mu(T_{\epsilon}^{-1}(B))dp(\epsilon).
\end{equation*}

Since $\phi $ is the one sided shift and $\nu $ is a stationary measure,
i.e., $L_{\xi }\nu =\nu $, the product measure $\mathbb{P}\times \nu $ is
invariant for $F$ \cite[Proposition 5.4]{Viana}. Since the transfer operator
related to convolution with a Bounded Variation kernel is regularizing from $%
L^{1}$ to $BV$ which is compactly immersed in $L^{1}$ it is easy to see that
the transfer operator has at least one stationary measure $f_\xi$ with density in $BV
$ (see \cite{GG} Lemma 23 for more details). 

\begin{definition}
A stationary measure $\nu$ is said to be ergodic if the measure $\mathbb{P}%
\times\nu$ is ergodic for $F$.
\end{definition}

\begin{remark}
While we use this as our definition of ergodicity for the random dynamic, we
refer to \cite{Viana} for equivalent alternative definitions.
\end{remark}

\begin{proposition}
\label{ergo} Let $L_{\xi}$ be the transfer operator associated to the
Belousov-Zhabotinsky map with additive noise of size $\xi$. Let $%
\xi_1=0.873\times 10^{-4}$. For each $\xi\geq \xi_1$ there exists a unique,
ergodic stationary measure $\mu_{\xi}$ for the operator $L_{\xi}$, and for $%
\mathbb{P}\times\mu_{\xi}$ almost every point $(\omega,x)$, we have that 
\begin{equation*}
\lim_{n\to +\infty} \frac{1}{n}\sum_{i=0}^{n-1}\log|T^{\prime
i}(\omega,x))|=\int_0^1 \log(|T^{\prime }(x)|)d\mu_{\xi}(x)=:\lambda_{\xi}
\end{equation*}
\end{proposition}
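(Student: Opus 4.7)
}

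The plan is to establish the three items in sequence: existence, uniqueness (which is actually the crucial new content since existence is standard), and then the Birkhoff-type identification of the Lyapunov exponent. Existence of at least one stationary measure $\mu_\xi \in L^1$ is a direct consequence of the fact that $T$ is Borel measurable, $[0,1]$ is compact, and $L_\xi$ maps probability densities to probability densities in $BV$ (since $N_\xi$ regularizes), so a standard Krylov--Bogolyubov / Schauder-type argument applies exactly as in \cite[Prop.~5.5]{Viana}.

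For uniqueness, I would argue as follows. The computer-aided estimates summarized in Table~\ref{t1} establish that already for the smallest noise size $\xi_1 = 0.873\times 10^{-4}$ there exist integers $\overline{n}$ and a constant $\alpha<1$ with $\|L_{\delta,\xi_1}^{\overline{n}}|_V\|_{L^1\to L^1}\leq \alpha$; combining this with the coarse-fine Lemma~\ref{cflemma} (case $\sigma=\mathrm{Id}$, $\Lambda = L_{\xi_1}$) yields $\|L_{\xi_1}^{\overline{n}+1}|_V\|_{L^1\to L^1} < 1$. Any two fixed probability densities $f,g$ of $L_{\xi_1}$ satisfy $f-g\in V$, so iterating the contraction forces $\|f-g\|_{L^1}=0$. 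Hence uniqueness holds for $\xi_1$. To extend to every $\xi\geq\xi_1$, I would invoke Corollary~\ref{cormix}: since $\|(N_{\xi_1}L)^{\overline{n}+1}|_V\|_{L^1\to L^1}<1$ implies $\|(N_{\xi}L)^{\overline{n}+1}|_V\|_{L^1\to L^1}<1$ for all $\xi\geq \xi_1$, the same contraction argument gives uniqueness throughout the range.

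Next, ergodicity of $\mathbb{P}\times\mu_\xi$ for the skew product $F$ follows from the uniqueness of the stationary measure by a standard argument (see \cite[Sec.~5]{Viana}): any $F$-invariant measure projecting on $\mathbb{P}$ on the first factor must have a second-factor marginal that is $L_\xi$-stationary, and uniqueness of such a marginal implies the skew-product measure is extremal, hence ergodic.

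Finally, to identify the limit of the Birkhoff averages with $\lambda_\xi$, I would apply the Birkhoff ergodic theorem to the ergodic system $(F,\mathbb{P}\times\mu_\xi)$ with observable $\tilde{h}(\omega,x):=\log|T'(x)|$. The main technical verification is integrability: one must check $\tilde{h}\in L^1(\mathbb{P}\times\mu_\xi)$, equivalently $\log|T'|\in L^1(\mu_\xi)$. This is exactly the content built up in Section~\ref{Lyap}: $\log|T'|$ is bounded outside a small neighborhood $A$ of the critical point $x=1/8$ and the kink at $x=0.3$, while on $A$ it is $L^1$ with respect to Lebesgue, and the density $\mu_\xi$ satisfies $\|\mu_\xi\|_{L^\infty}\leq \|\rho_\xi\|_{L^\infty}<\infty$ by applying one step of $N_\xi$. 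Thus $\int |\log|T'||\,d\mu_\xi\leq \|\log|T'|\|_{L^\infty(A^c)} + \|\log|T'|\|_{L^1(A)}\cdot \|\mu_\xi\|_{L^\infty(A)}<\infty$, and Birkhoff yields the stated almost-sure convergence to $\int_0^1\log|T'|\,d\mu_\xi = \lambda_\xi$. The only delicate step is the uniqueness argument, which genuinely depends on the certified computational contraction estimate rather than on any soft structural input.
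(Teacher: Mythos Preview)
Your proof is correct and follows essentially the same route as the paper: the certified contraction on $V$ gives uniqueness at $\xi_1$, Corollary~\ref{cormix} propagates this to all $\xi\geq\xi_1$, uniqueness together with the ergodic decomposition of stationary measures from \cite{Viana} yields ergodicity, and Birkhoff concludes. You in fact supply one detail the paper's proof leaves implicit, namely the verification that $\log|T'|\in L^1(\mu_\xi)$ required to invoke Birkhoff.
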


\begin{proof}
The operator $L_{\xi }$ is defined from $SM(X)$ to $L^{1}([0,1])$;
therefore, all of its fixed points belong to $L^{1}([0,1])$.

In the last row of Table \ref{t1} the columns $n$ and $\alpha$ shows that
for the noise of amplitude $\xi_1$ it holds 
\begin{equation}  \label{055}
||L_{\xi_1}^{75}||_{V\to L^1}\leq 0.55
\end{equation}
(the $n$ in the table is valid for both the approximation $L_{\delta, {\xi_1}
}$ and the original operator $L_{\xi_1}$, as explained after Lemma \ref%
{cflemma}).

Suppose $f,g$ are two fixed probability measures for the operator, by %
\eqref{055} : 
\begin{equation*}
||f-g||_{L^1}=||L_{\xi_1}^{75}(f-g)||_{L^1}\leq 0.55||f-g||_{L^1} 
\end{equation*}
which implies that $||f-g||_{L^1}=0$. Thus $L_{\xi_1}$ has a unique fixed
probability measure $\mu_{\xi_1}$ in $L^1$, i.e., the only stationary
measure is $\mu_{\xi_1}$. Now the same holds for every $\xi\geq \xi_1$
thanks to Corollary \ref{cormix}. The corollary indeed implies that $%
||L_{\xi}^{75}||_{V\to L^1}<1$ and we can repeat the same reasoning as above,
obtaining an unique stationary measure for each such $L_{\xi}$.

A kind of Ergodic Decomposition Theorem is established for stationary
measures \cite[Theorem 5.13]{Viana}; the theorem states that every
stationary measure can be written as a convex combination of ergodic
stationary measures. Since $\mu_{\xi}$ is the unique stationary measure, it
follows that $\mu_{\xi}$ is an ergodic stationary measure. Therefore $%
\mathbb{P}\times \mu_{\xi}$ is an ergodic measure for $F$ and the result
follows applying Birkhoff Ergodic Theorem to $F$.
\end{proof}

%%% Local Variables: 
%%% mode: latex
%%% TeX-master: "noise_induced_order"
%%% End: 

%\end{document}

\section{Appendix: operator norms and variation estimates}
\label{appendix2}

%\begin{document}

\label{appendix} In this section we prove several technical lemmas and
estimates about operator norms and variation of iterates of measures which
are used in Section \ref{firstalgo}.

\subsection{Operator norms}

The following Lemma  allows to estimate the variation of a  boundary reflecting convolution.

\begin{lemma}
\label{chepallepero} Let $\rho_{\xi}(x)$ be a real function with bounded variation
and support contained in $(-\xi,\xi)$. Let $b(x)$ a function with zero
average on the unit interval. We have 
\begin{equation*}
\mathrm{Var}( \rho_{\xi}\hat{\ast} f) \leq \mathrm{Var}(\rho_{\xi})\cdot \Vert f\Vert _{L^1([0,1])}.
\end{equation*}
As a consequence we have that 
\begin{equation*}
\Vert N_\xi \Vert_{L^1\rightarrow \mathrm{Var}}\leq \mathrm{Var}%
(\rho_\xi)=\xi^{-1}\mathrm{Var}(\rho).
\end{equation*}
\end{lemma}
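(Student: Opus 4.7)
The plan is to reduce the reflecting-boundary convolution to an ordinary convolution on $\mathbb{R}$ and then control the loss caused by the pushforward under the reflection map. By Definition \ref{def:hatconv} we have $\rho_\xi\hat{\ast}f = \pi^{*}(\hat\rho_\xi\ast\hat f)$, so it is enough to prove two auxiliary inequalities: (i) the classical BV--convolution bound $\Var_{\mathbb{R}}(\hat\rho_\xi\ast\hat f) \leq \Var(\rho_\xi)\cdot\Vert f\Vert_{L^1}$, and (ii) a ``no-increase'' property $\Var_{[0,1]}(\pi^{*}g) \leq \Var_{\mathbb{R}}(g)$ for any compactly supported $g\in BV(\mathbb{R})$.

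For (i), I would fix any increasing sequence $y_0<y_1<\dots<y_k$ in $\mathbb{R}$ and apply Fubini together with the triangle inequality to obtain
\begin{align*}
\sum_i \bigl|(\hat\rho_\xi\ast\hat f)(y_{i+1}) - (\hat\rho_\xi\ast\hat f)(y_i)\bigr|
&\leq \int_{\mathbb{R}} \sum_i \bigl|\hat\rho_\xi(y_{i+1}-z) - \hat\rho_\xi(y_i-z)\bigr|\cdot|\hat f(z)|\,dz \\
&\leq \Var_{\mathbb{R}}(\hat\rho_\xi)\cdot\Vert f\Vert_{L^1},
\end{align*}
and then take the supremum over all such partitions. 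Because $\rho_\xi$ is assumed to have support strictly inside the open interval $(-\xi,\xi)$, extending by zero introduces no boundary jumps, so $\Var_{\mathbb{R}}(\hat\rho_\xi)=\Var(\rho_\xi)$; the rescaling identity $\rho_\xi(x)=\xi^{-1}\rho(\xi^{-1}x)$ then yields $\Var(\rho_\xi)=\xi^{-1}\Var(\rho)$.

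For (ii), under the standing assumption $\xi<1/2$, the map $\pi$ restricted to the support $[-\xi,1+\xi]$ of $g=\hat\rho_\xi\ast\hat f$ splits into three monotone branches: the identity on $[0,1]$, the reflection $x\mapsto -x$ on $[-\xi,0]$, and $x\mapsto 2-x$ on $[1,1+\xi]$. On $[0,1]$ this produces the identity
\[
(\pi^{*}g)(x) = g(x)\cdot 1_{[0,1]}(x) + g(-x)\cdot 1_{[0,\xi]}(x) + g(2-x)\cdot 1_{[1-\xi,1]}(x).
\]
Subadditivity of $\Var$ under sums bounds $\Var_{[0,1]}(\pi^{*}g)$ by the sum of the variations of the three summands, which equal $\Var_{[0,1]}(g)$, $\Var_{[-\xi,0]}(g)+|g(-\xi)|$, and $\Var_{[1,1+\xi]}(g)+|g(1+\xi)|$ respectively, where the extra terms come from the jumps that appear where each indicator drops to zero. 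Since $g$ vanishes outside $[-\xi,1+\xi]$, the total $\Var_{\mathbb{R}}(g)$ decomposes precisely into these same five contributions, giving $\Var_{[0,1]}(\pi^{*}g)\leq \Var_{\mathbb{R}}(g)$. Combining (i) and (ii) proves the main inequality, and the operator-norm consequence $\Vert N_\xi\Vert_{L^1\to\Var}\leq\xi^{-1}\Var(\rho)$ is an immediate translation using $N_\xi f=\rho_\xi\hat{\ast}f$. The only delicate point is the bookkeeping of the boundary jumps in (ii); without carefully matching them against the jumps hidden in $\Var_{\mathbb{R}}(g)$ one would lose a factor and fail to recover the sharp bound $\xi^{-1}\Var(\rho)$ needed elsewhere in the paper.
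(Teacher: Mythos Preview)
Your proof is correct and follows essentially the same route as the paper: both arguments write $\rho_\xi\hat\ast f$ on $[0,1]$ as the sum $g(x)+g(-x)+g(2-x)$ with $g=\hat\rho_\xi\ast\hat f$, invoke the classical bound $\Var_{\mathbb{R}}(g)\leq\Var(\rho_\xi)\Vert f\Vert_{L^1}$, and then use subadditivity of variation together with the decomposition of $\Var_{\mathbb{R}}(g)$ over $[-1,0]\cup[0,1]\cup[1,2]$. The only difference is cosmetic: the paper writes the three reflected pieces without indicators (since $g$ vanishes outside $[-\xi,1+\xi]$ anyway) and therefore never needs to track the boundary jumps $|g(\pm\xi)|$ separately---they are already absorbed into $\Var_{[-1,0]}(g)$ and $\Var_{[1,2]}(g)$.
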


%We extend $b$ by reflection outside \footnote{controllare che tutto funzioni con la nuova definizione di *} of $[0,1]$, that is $b(x)=b(-x)$ in $%
%[-1,0)$, and let's extend it to the whole $\mathbb{R}$ as a periodic
%function of period $2$.

\begin{proof}
Since the support of $\hat{f}$ is contained in $[0,1]$ we have that
the support of $\rho_{\xi}\ast \hat{f}$ is contained in $[-\xi,1+\xi]$ and
that:
\[
\rho_{\xi}\hat{\ast}f(x)=\rho_{\xi}\ast \hat{f}(x)+\rho_{\xi}\ast \hat{f}(-x)+\rho_{\xi}\ast \hat{f}(2-x). 
\]

We recall that:
\[
\Var_{[-1,2]} (\rho_{\xi}\ast \hat{f})\leq \Var(\rho_{\xi})\cdot ||\hat{f}||_{L^1([-1,2])}=\Var(\rho_{\xi})\cdot ||f||_{L^1([0,1])};
\]
which implies
\belowdisplayskip=-12pt %to have the qedhere at the right place
\begin{align*}
\Var_{[0,1]}(\rho_{\xi}\hat{\ast}f)&\leq \Var_{[0,1]}(\rho_{\xi}\ast \hat{f})+\Var_{[-1,0]}(\rho_{\xi}\ast \hat{f})+\Var_{[1,2]}(\rho_{\xi}\ast \hat{f})\\
&\leq \Var(\rho_{\xi})\cdot ||f||_{L^1([0,1])}.
\end{align*}
\end{proof}

The following Lemma is a small improvement (by a factor $2$) of a Lemma
which has already been used in \cite{L} and \cite{GN}.
While it seems to be folklore, we prove it here for a matter of completeness.

\begin{lemma}
\label{Lemma3} For each $f\in BV$ 
\begin{equation*}
||f-\pi _{\delta }f||_{L^1}\leq \frac{\delta }{2}\mathrm{Var}(f).
\end{equation*}%
In other words%
\begin{equation*}
\Vert 1-\pi _{\delta }\Vert _{\mathrm{Var}\rightarrow L^{1}}\leq \delta /2.
\end{equation*}
\end{lemma}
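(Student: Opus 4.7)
The plan is to reduce the global bound to a local computation on each interval of the partition. Since $\pi_\delta$ is the conditional expectation with respect to the $\sigma$-algebra $F_\delta$ generated by the partition, on each interval $I_j$ the function $\pi_\delta f$ is constant and equal to $\bar f_{I_j}:=\frac{1}{\delta}\int_{I_j}f$. Consequently the restriction $g:=(f-\pi_\delta f)|_{I_j}$ has zero average on $I_j$, and $\mathrm{Var}_{I_j}(g)=\mathrm{Var}_{I_j}(f)$ because subtracting a constant does not change the variation. The proof then reduces to the following local claim: for every BV function $g$ on an interval $I$ of length $\delta$ with $\int_I g=0$,
$$\|g\|_{L^1(I)}\leq \tfrac{\delta}{2}\mathrm{Var}_I(g).$$

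For the local claim my plan is to split $I$ into $E_+=\{g>0\}$ and $E_-=\{g<0\}$. The zero–average condition gives $\int_{E_+}g=-\int_{E_-}g=\tfrac12\|g\|_{L^1(I)}$. For any $x\in E_+$ and $y\in E_-$ the BV hypothesis yields $g(x)-g(y)\le\mathrm{Var}_I(g)$. Integrating this inequality over $E_+\times E_-$ and evaluating the left-hand side directly gives
$$(|E_+|+|E_-|)\cdot\tfrac12\|g\|_{L^1(I)}\;=\;\int_{E_+}\!\!\int_{E_-}(g(x)-g(y))\,dy\,dx\;\le\;|E_+|\,|E_-|\,\mathrm{Var}_I(g).$$
Applying AM--GM to the right-hand side, $|E_+|\,|E_-|\le \bigl(\tfrac{|E_+|+|E_-|}{2}\bigr)^2\le \tfrac{|I|}{2}(|E_+|+|E_-|)$, and dividing by the common factor $(|E_+|+|E_-|)$ produces the sharp constant $\delta/2$.

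Finally, summing the local bound over the partition intervals $I_j$ and using subadditivity of variation ($\sum_j \mathrm{Var}_{I_j}(f)\le\mathrm{Var}(f)$, since the $I_j$ are pairwise disjoint and tile $[0,1]$) yields the global inequality $\|f-\pi_\delta f\|_{L^1}\le\tfrac{\delta}{2}\mathrm{Var}(f)$. The only delicate point is the sharp constant in the local claim: a naive estimate using $|g(x)-g(y)|\le\mathrm{Var}_I(g)$ together with the double integral $\frac{1}{\delta}\int_I\!\int_I|g(x)-g(y)|\,dx\,dy$ would produce the weaker bound $\delta\cdot\mathrm{Var}_I(g)$, so exploiting the zero-average property through the $E_+/E_-$ splitting (and AM--GM) is the key step that furnishes the factor $\tfrac12$ improvement promised by the statement.
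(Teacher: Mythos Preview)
Your approach is correct in spirit and genuinely different from the paper's, but there is an arithmetic slip in the AM--GM step that, as written, loses the factor $\tfrac12$ you claim to recover. From $|E_+|\,|E_-|\le\bigl(\tfrac{|E_+|+|E_-|}{2}\bigr)^2$ and $|E_+|+|E_-|\le |I|$ one obtains
\[
|E_+|\,|E_-|\;\le\;\frac{(|E_+|+|E_-|)^2}{4}\;\le\;\frac{|I|}{4}\,(|E_+|+|E_-|),
\]
not $\tfrac{|I|}{2}(|E_+|+|E_-|)$ as you wrote. With your stated bound, dividing through gives only $\|g\|_{L^1(I)}\le\delta\,\mathrm{Var}_I(g)$; with the corrected constant $\tfrac{|I|}{4}$ one indeed gets $\tfrac12\|g\|_{L^1(I)}\le\tfrac{\delta}{4}\mathrm{Var}_I(g)$, i.e.\ the sharp local bound $\|g\|_{L^1(I)}\le\tfrac{\delta}{2}\mathrm{Var}_I(g)$. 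Once this is fixed, your argument is complete (the degenerate case $|E_+|+|E_-|=0$ being trivial).

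As for the comparison: the paper argues by first treating two-valued step functions on $I$ and optimizing via calculus to find that the worst case has the jump at the midpoint, then reduces a general zero-average $g$ to this extremal shape by an averaging/monotonicity argument. Your $E_+/E_-$ double-integral argument with AM--GM bypasses both the explicit optimization and the reduction step; it is shorter and more elementary, and it works verbatim for any measurable zero-average function with bounded essential oscillation. The paper's route, on the other hand, makes the extremizer visible (a symmetric step function), which your inequality chain does not display directly.
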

\begin{proof}
Let $I$ be an interval of the partition $\Pi$, and assume $f$ to have variation $v$ in
  $I$. In $I$, $\pi_\delta(f)$ is constant and equal to the average of $f$ in $I$.
 
Subtracting a constant we can assume $f$ to have average $0$ in $I$, so that we will be
estimating $ \|f\|_{L^1(I)} $
assuming $\pi_\delta f$ to be $0$ in $I$. We will disregard the variation at the
boundary of $I$, and will only assume a bound on the variation on the interior.
  
We start by supposing $f$ to be piecewise constant, with two pieces:
$f$ is $-x$ on an interval of size $a$ and $v-x$
in an interval of size $\delta-a$. Since the average is $0$ we have that 
\begin{equation*}
xa = (v-x)(\delta-a),
\end{equation*}
that implies $x = v-av/\delta$. The $L^1$ norm in $I$ is 
\begin{equation*}
2xa = 2a(v-av/\delta),
\end{equation*}
and has derivative with respect to $a$ equal to 
\begin{equation*}
2v - 4av/\delta,
\end{equation*}
which becomes zero for $a = \delta/2$. Consequently $x=v/2$, and a variation 
$v$ contributed an $L^1(I)$ norm of $2ax=v\delta/2$.

We claim that the biggest ratio $\|f\|_{L^1(I)} / v$ is attained when $f$ is piecewise
constant attaining exactly the values $-x$ and $v-x$. Indeed, if this was not the case,
we could build a new function $\tilde{f}$ selecting the region where $f$ is non-negative
(or non-positive), and setting as value the average of $f$ in this region. In this way
we obtain a $\tilde f$ that has the same $L^1(I)$-norm, but smaller variation. If these
two regions are not a partition of $I$ in two intervals, then the difference between the
maximum and the minimum of $f$ is smaller than $v$, and again the $f$ is not optimal.

Applying this estimate to all the intervals of the partition we have that a
total variation of $v$ can give a total $L^{1}$ norm of $v\delta /2$, and
consequently we have the lemma.
\end{proof}

\begin{comment}
  Let $I$ be an interval of the partition $\Pi$, and assume $f$ to have variation $v$ in
  $I$. In $I$, $\pi_\delta(f)$ is constant and equal to the average of $f$ in $I$. Let $v$
  be the variation of $f$ in $I$, we will estimate $\|f-\pi_\delta f\|_{L^1(I)}$.
  Subtracting a constant we can assume $f$ to have average $0$ in $I$, so that we will be
  estimating $%
  \|f\|_{L^1(I)} $
  assuming $\pi_\delta f$ to be $0$ in $I$. We will disregard the variation at the
  boundary of $I$, and will only assume a bound on the variation on the interior.

  Having bounded variation and average $0$, $f$ will have a negative infimum $-x$ \footnote{qui il referee dice che è vero solo per le monotone e che questa dimostrazione è banale} and a
  positive supremum $v-x$ in $I$, $v$ being the variation.
\end{comment}

By Lemma \ref{Lemma3}, since $\Vert N_{\xi }\Vert _{L^{1}\rightarrow \mathrm{%
Var}}=\xi ^{-1}\mathrm{Var}(\rho )$ by Lemma \ref{chepallepero} we get:

\begin{corollary}
\label{c1}With the notations defined above, we have 
\begin{equation*}
\Vert (1-\pi _{\delta })N_{\xi }\Vert _{L^{1}\rightarrow L^{1}}\leq \frac{1}{%
2}\delta \xi ^{-1}\mathrm{Var}(\rho ).
\end{equation*}
\end{corollary}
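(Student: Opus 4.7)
The plan is to obtain the corollary as an immediate consequence of the two preceding results by writing $(1-\pi_\delta)N_\xi$ as a composition and chaining the operator norms between the three spaces $L^1$, $\mathrm{Var}$, and $L^1$. This is purely a matter of composing estimates, so the proof should be a one-line inequality with no additional ideas required.

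Concretely, for any $f \in L^1$ I would first apply Lemma \ref{chepallepero} to the intermediate image, which gives
\[
\mathrm{Var}(N_\xi f) \;\leq\; \xi^{-1}\mathrm{Var}(\rho)\,\|f\|_{L^1},
\]
so that $N_\xi$ is bounded as an operator $L^1 \to \mathrm{Var}$ with norm at most $\xi^{-1}\mathrm{Var}(\rho)$. Then I would apply Lemma \ref{Lemma3} to $N_\xi f$, whose variation is now controlled, to get
\[
\|(1-\pi_\delta)N_\xi f\|_{L^1} \;\leq\; \frac{\delta}{2}\,\mathrm{Var}(N_\xi f) \;\leq\; \frac{\delta}{2}\,\xi^{-1}\mathrm{Var}(\rho)\,\|f\|_{L^1}.
\]
Taking the supremum over $f$ with $\|f\|_{L^1}\leq 1$ yields the claimed bound on $\|(1-\pi_\delta)N_\xi\|_{L^1 \to L^1}$.

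There is no real obstacle here: the content of the estimate is entirely carried by Lemmas \ref{Lemma3} and \ref{chepallepero}, and the corollary only records the composition. The only minor subtlety worth mentioning is that Lemma \ref{chepallepero} is stated (in its proof) via the reflecting-boundary convolution $\hat{\ast}$, so one must recognize that $N_\xi f = \rho_\xi \hat{\ast} f$ by the definition of $N_\xi$ in \eqref{refl}, after which the chain above goes through verbatim.
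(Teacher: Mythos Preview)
Your proposal is correct and is exactly the paper's argument: the corollary is stated immediately after noting that $\|N_\xi\|_{L^1\to\mathrm{Var}}\leq\xi^{-1}\mathrm{Var}(\rho)$ (Lemma~\ref{chepallepero}) and $\|1-\pi_\delta\|_{\mathrm{Var}\to L^1}\leq\delta/2$ (Lemma~\ref{Lemma3}), and the bound follows by composing the two.
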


This corollary is used in Sec. \ref{init}.

\subsubsection{Estimate for $\Vert N_{\protect\xi }(1-\protect\pi _{\protect%
\delta })\Vert _{L^{1}}$}

To estimate this item (necessary in Sec. \ref{init}) we will use the $W$
norm, defined in definition \ref{def:Wasserstein}.

\begin{proposition}
\label{prosop}We have 
\begin{equation*}
\Vert N_{\xi }(1-\pi _{\delta })\Vert _{L^{1}\rightarrow L^{1}}\leq \frac{1}{%
2}\delta \xi ^{-1}\mathrm{Var}(\rho ).
\end{equation*}
\end{proposition}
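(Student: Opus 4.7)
The plan is to estimate $\|N_\xi(1-\pi_\delta)f\|_{L^1}$ by factoring through the Wasserstein-like norm $W$, using the two bounds summarized in Proposition \ref{summmary}: $\|1-\pi_\delta\|_{L^1\to W}\leq \delta/2$ (Lemma \ref{lem1mpLW}) and $\|N_\xi\|_{W\to L^1}\leq \xi^{-1}\mathrm{Var}(\rho)$ (Lemma \ref{L1}). Composing these two estimates gives exactly the claimed bound, so the only real content of the argument is verifying that $(1-\pi_\delta)f$ lies in the domain of $\|\cdot\|_W$.

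First I would observe that for any $f\in L^1([0,1])$, the function $g:=(1-\pi_\delta)f$ has zero average on every interval of the Ulam partition (since on each such interval $\pi_\delta f$ equals the average of $f$ there), and in particular zero total average on $[0,1]$. Hence $g\in W$ in the sense of Definition \ref{def:Wasserstein}, and Lemma \ref{lem1mpLW} applies directly to yield
\[
\|g\|_W=\|(1-\pi_\delta)f\|_W\leq \tfrac{\delta}{2}\,\|f\|_{L^1}.
\]
Intuitively, the antiderivative of $g$ vanishes at every endpoint of the partition and is controlled on each subinterval by the local $L^1$ mass of $f$, producing the factor $\delta/2$.

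Next I would apply Lemma \ref{L1} to $g$ to obtain
\[
\|N_\xi g\|_{L^1}\leq \xi^{-1}\mathrm{Var}(\rho)\,\|g\|_W.
\]
The mechanism here is an integration-by-parts identity: convolving $g\in W$ with a kernel $\rho_\xi$ amounts, up to boundary terms handled by the reflecting construction $\hat{*}$, to convolving the antiderivative of $g$ with the derivative (in the distributional sense) of $\rho_\xi$, whose total mass is $\mathrm{Var}(\rho_\xi)=\xi^{-1}\mathrm{Var}(\rho)$.

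Chaining the two estimates gives
\[
\|N_\xi(1-\pi_\delta)f\|_{L^1}\leq \xi^{-1}\mathrm{Var}(\rho)\cdot \tfrac{\delta}{2}\,\|f\|_{L^1},
\]
which is precisely the claim after taking supremum over $\|f\|_{L^1}\leq 1$. The main subtlety, which is encapsulated in Lemma \ref{L1} rather than addressed here, is that $N_\xi$ uses the reflecting-boundary convolution $\hat{*}$ rather than ordinary convolution; one must check that the reflection step (see Definition \ref{def:hatconv}) does not spoil the $W\to L^1$ bound, but since $\pi^*$ is an $L^1$ contraction this reduction to the standard convolution estimate is routine. Apart from that, the proof is a direct composition of the two norm bounds from the summary.
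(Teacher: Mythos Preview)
Your proposal is correct and follows exactly the same approach as the paper's proof: factor $N_\xi(1-\pi_\delta)$ through the $W$-norm and combine the bounds $\|1-\pi_\delta\|_{L^1\to W}\leq \delta/2$ (Lemma \ref{lem1mpLW}) and $\|N_\xi\|_{W\to L^1}\leq \xi^{-1}\mathrm{Var}(\rho)$ (Lemma \ref{L1}). The paper's proof is in fact just this one-line composition; your additional remarks about $(1-\pi_\delta)f$ having zero average and about the reflecting-boundary convolution are correct elaborations of points the paper absorbs into the cited lemmas.
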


The proof will be postponed to the following lemmas. The first lemma relates
the convolution with the $\Vert ~\Vert _{W}$ norm. 

\begin{lemma}
\label{L1}Let $a(x)$ be a real function with bounded variation with support
contained in $(-1/2,1/2)$, and $b(x)$ supported in $[0,1]$ and with zero average. 
We have 
\begin{equation*}
\Vert a\ast b\Vert _{L^{1}([-1,2])}\leq \mathrm{Var}(a)\cdot \Vert b\Vert _{W}.
\end{equation*}
As a consequence we have that 
\begin{equation*}
\Vert a\hat{\ast} b\Vert _{L^{1}([0,1])}\leq \mathrm{Var}(a)\cdot \Vert b\Vert _{W}
\end{equation*}
and therefore
\begin{equation*}
\Vert N_\xi \Vert_{W\rightarrow L^1}\leq \mathrm{Var}(\rho_\xi)=\xi^{-1}%
\mathrm{Var}(\rho).
\end{equation*}
\end{lemma}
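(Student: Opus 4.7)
The plan is to reduce the convolution estimate to a Young-type inequality after an integration by parts that uses the zero-average hypothesis on $b$ in an essential way. The key observation is that if we set $B(x)=\int_0^x b(t)\,\mathrm{d}t$, then $B$ is supported in $[0,1]$ (because $B(0)=0$ and $B(1)=\int_0^1 b=0$), so it extends continuously by zero to all of $\mathbb{R}$ and satisfies $\|B\|_{L^1(\mathbb{R})}=\|b\|_W$ by the very definition of the Wasserstein-like norm.

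First I would prove the main inequality $\|a*b\|_{L^1([-1,2])}\leq \mathrm{Var}(a)\cdot\|b\|_W$. Since $a$ has bounded variation, its distributional derivative $\mathrm{D}a$ is a finite signed Borel measure with total variation $\|\mathrm{D}a\|_M=\mathrm{Var}(a)$, supported in $[-1/2,1/2]$. Writing $b=B'$ in the sense of distributions and integrating by parts (the boundary contributions vanish because $B$ has compact support in $[0,1]$), one obtains the pointwise identity
\[
(a*b)(x)=\int a(x-y)\,\mathrm{d}B(y) = \int B(x-y)\,\mathrm{d}(\mathrm{D}a)(y) = (\mathrm{D}a * B)(x).
\]
The supports of $a$ and $b$ are contained in $[-1/2,1/2]$ and $[0,1]$ respectively, so $a*b$ is supported in $[-1/2,3/2]\subset[-1,2]$. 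Applying Young's inequality for the convolution of a finite signed measure and an $L^1$ function gives
\[
\|a*b\|_{L^1([-1,2])} = \|\mathrm{D}a * B\|_{L^1(\mathbb{R})} \leq \|\mathrm{D}a\|_M \cdot \|B\|_{L^1(\mathbb{R})} = \mathrm{Var}(a)\cdot \|b\|_W.
\]

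Next I would deduce the reflecting-boundary version. By Definition \ref{def:hatconv} we have $a\,\hat{\ast}\,b=\pi^{\ast}(\hat{a}*\hat{b})$ with $\hat{a},\hat{b}$ the extensions by zero. The map $\pi$ defined at (\ref{ppi}) maps $[-1,2]$ onto $[0,1]$ and is piecewise affine with slope $\pm 1$, so the pushforward $\pi^{\ast}$ satisfies $\|\pi^{\ast}\nu\|_{L^1([0,1])}\leq \|\nu\|_{L^1([-1,2])}$ for any $L^1$ function $\nu$ supported in $[-1,2]$. Combining with the previous step yields the second claim. Finally, specializing $a=\rho_\xi$, and using that the rescaling $\rho_\xi(x)=\xi^{-1}\rho(x/\xi)$ satisfies $\mathrm{Var}(\rho_\xi)=\xi^{-1}\mathrm{Var}(\rho)$ (since rescaling the domain leaves variation unchanged while multiplication of the values by $\xi^{-1}$ rescales it by $\xi^{-1}$), we get $\|N_\xi\|_{W\to L^1}\leq \xi^{-1}\mathrm{Var}(\rho)$.

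The only delicate step is justifying the integration by parts rigorously when $a$ is only BV (not absolutely continuous). I would handle this either by a standard approximation of $a$ by smooth functions whose $L^1$-norm of the derivative converges to $\mathrm{Var}(a)$, or by invoking directly the fact that for $B\in L^1$ compactly supported and $a\in \mathrm{BV}$ the convolutions $a*B'$ and $\mathrm{D}a * B$ coincide as distributions (and both lie in $L^1$). This is the step I expect to be the only real subtlety; once it is in place the rest is bookkeeping with supports and the definition of $\|\cdot\|_W$.
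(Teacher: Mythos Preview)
Your proof is correct and follows essentially the same approach as the paper: define $B=\int_0^\cdot b$, integrate by parts to transfer the derivative onto $a$, and then bound by $\mathrm{Var}(a)\cdot\|B\|_{L^1}=\mathrm{Var}(a)\cdot\|b\|_W$. The paper carries out the AC case by an explicit Fubini computation and then passes to general BV by approximation (exactly the alternative you mention), whereas you package the same step as $a*b=\mathrm{D}a*B$ plus Young's inequality; the reflecting-boundary deduction via $\pi^\ast$ is likewise the same as the paper's explicit three-term splitting.
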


\begin{proof}
Let's prove the lemma assuming first that $a(x)$ is absolutely continuous. Let
$B(x)=\int_{0}^{x}b(t)\mathrm{d}t$; integrating by parts we have
\begin{align*}
(a\ast b)(x)& =\int_{-1}^{1}a(-t)b(x+t)\mathrm{d}t \\
& =[a(-t)B(x+t)]_{-1}^{+1}-\int_{-1}^{1}-a^{\prime
}(-t)B(x+t)\mathrm{d}x,
\end{align*}%
the boundary term being $0$ because $b(t)$ is zero-average in the interval. 

The support of $a\ast b$ is contained in $[-1/2,3/2]$;
we compute now:
\begin{align*}
\Vert a\ast b\Vert _{L^{1}([-1,2])}& =\int_{-1}^2 \left\vert \int_{-1}^{1
}a^{\prime }(t)B(x-t)\mathrm{d}t\right\vert \mathrm{d}x \\
&\leq\int_{-1}^2  \int_{-1}^{1
}|a^{\prime }(t)B(x-t)| \mathrm{d}t\, \mathrm{d}x\\
&=\int_{-1}^1  \int_{-1-t}^{2-t
}|a^{\prime }(t)B(u)| \mathrm{d}u\,dt\\
&\leq \int_{-1}^1 |a^{\prime }(t)|\mathrm{d}t \int_{0}^{1} |B(u)| \mathrm{d}u
%
%& =\frac{1}{2}\int_{-1}^1 \left\vert \int_{-1 }^{1}a^{\prime }(-t)B(x+t)\mathrm{d}t\right\vert \mathrm{d}x \\
%& \leq \frac{1}{2}\int_{-1}^1 \left( \int_{-1}^{1}|a^{\prime }(t)B(x+t)|\mathrm{d}%
%t\right) \mathrm{d}x \\
%& \leq \frac{1}{2}\int_{-1}^1 \left( \int_{-1}^{1}|a^{\prime }(t)B(x+t)|\mathrm{d}%
%x\right) \mathrm{d}t \\
%& \leq \frac{1}{2} \int_{-1}^{1}\left( \int_{t-1}^{t+1} |a^{\prime }(t)B(u)|\mathrm{d}%
%u\right) \mathrm{d}t
\end{align*}
(putting $u=t-x$ and using that $B(u)$ has support in $[0,1]$)
\begin{align*}
& \leq \int_{-1 }^{1 }|a^{\prime }(t)|\mathrm{d}t\cdot \int_{0}^1 |B(u)|%
\mathrm{d}u \\
& \leq \mathrm{Var(a)}\cdot \Vert B\Vert_{L^{1}} \\
& \leq \mathrm{Var(a)}\cdot \Vert b\Vert_{W}.
\end{align*}
When $a(x)$ is not absolutely continuous, let's just choose absolutely continuous functions
$a_n$ such that $a_n\rightarrow a$ in $L^1$ and $\Var(a_n)\rightarrow \Var(a)$, and apply Fatou's Lemma.

Now, observing as before that:
\[
\rho_{\xi}\hat{\ast}f(x)=\rho_{\xi}\ast \hat{f}(x)+\rho_{\xi}\ast \hat{f}(-x)+\rho_{\xi}\ast \hat{f}(2-x). 
\]
we have that
\[
||\rho_{\xi}\hat{\ast}f||_{L^1([0,1])}\leq ||\rho_{\xi}\ast \hat{f}||_{L^1([-1,2])}\leq
\Var(\rho_{\xi})||f||_W.
\qedhere
\]
\end{proof}

To prove Prop. \ref{prosop} we also need a bound for $\Vert 1-\pi _{\delta
}\Vert _{L^{1}\rightarrow W}$.

\begin{lemma}
\label{lem1mpLW} For the Ulam discretization of size $\delta $ we
have 
\begin{equation*}
\Vert 1-\pi _{\delta }\Vert _{L^{1}\rightarrow W}\leq \delta /2.
\end{equation*}
\end{lemma}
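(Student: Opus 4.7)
The plan is to exploit directly the defining feature of the Ulam projection: on each partition interval $I_j=[j\delta,(j+1)\delta]$, the function $g:=(1-\pi_\delta)f$ has zero integral. Setting $G(x)=\int_0^x g(t)\,dt$, this means $G$ vanishes at every grid point $j\delta$, so the global $W$-norm decomposes as $\|g\|_W=\|G\|_{L^1}=\sum_j \|G\|_{L^1(I_j)}$. It will therefore suffice to prove the local bound $\|G\|_{L^1(I_j)}\le \tfrac{\delta}{2}\|f\|_{L^1(I_j)}$ on each interval and sum.

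To get the local bound, I would substitute $\bar f_j=\tfrac{1}{\delta}\int_{I_j} f$ and rewrite, for $x\in I_j$,
\[
G(x)=\int_{j\delta}^x f(t)\,dt-\frac{x-j\delta}{\delta}\int_{j\delta}^{(j+1)\delta}f(t)\,dt
=\frac{(j+1)\delta-x}{\delta}\int_{j\delta}^x f-\frac{x-j\delta}{\delta}\int_x^{(j+1)\delta}f,
\]
which is essentially the identity saying that $G$ is the difference of $\int f$ from its piecewise linear interpolant at the grid. The triangle inequality then gives the clean pointwise estimate
\[
|G(x)|\le \frac{(j+1)\delta-x}{\delta}\int_{j\delta}^x |f|+\frac{x-j\delta}{\delta}\int_x^{(j+1)\delta}|f|.
\]

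Integrating this over $I_j$ and swapping the order of integration via Fubini converts the right-hand side into $\int_{I_j}|f(t)|\,w(t)\,dt$ with weight $w(t)=\tfrac{((j+1)\delta-t)^2+(t-j\delta)^2}{2\delta}$. A one-line calculus check shows $w(t)\le \delta/2$ on $I_j$ (the quadratic $(\delta-s)^2+s^2$ is maximized on $[0,\delta]$ at the endpoints, with value $\delta^2$), yielding $\|G\|_{L^1(I_j)}\le \tfrac{\delta}{2}\|f\|_{L^1(I_j)}$. Summing over $j$ gives the claim.

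The proof is essentially a one-dimensional Poincar\'e/Wirtinger-type inequality, so there is no serious obstacle; the only mild subtlety is to avoid the naive route of first bounding $\|g\|_{L^1(I_j)}$ in terms of $\|f\|_{L^1(I_j)}$ (which costs a factor of $2$), and instead estimate $|G(x)|$ \emph{directly} in terms of $|f|$ using that $G$ vanishes at both endpoints of $I_j$. This is what gives the sharp constant $\delta/2$ rather than $\delta$.
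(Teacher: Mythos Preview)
Your argument is correct. The decomposition via $G(j\delta)=0$, the rewriting of $G(x)$ as a convex-like combination of $\int_{j\delta}^x f$ and $-\int_x^{(j+1)\delta}f$, the Fubini step, and the bound $w(t)\le\delta/2$ are all fine and give the sharp constant.

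The paper takes a different route: it extends $1-\pi_\delta$ to all Borel measures, checks the inequality on Dirac masses $\Delta_t$, and then argues by weak-$\ast$ density and lower semicontinuity. For $\Delta_t$ in the partition interval $(p_i,p_{i+1})$ one has $(1-\pi_\delta)\Delta_t=\Delta_t-\delta^{-1}\chi_{(p_i,p_{i+1})}$, and the $W$-norm is computed explicitly as
\[
\|u_t\|_{L^1}=\frac{(t-p_i)^2+(p_{i+1}-t)^2}{2\delta}\le\frac{\delta}{2}.
\]
Note that this is \emph{exactly} your weight $w(t)$: your Fubini computation and the paper's Dirac computation are two presentations of the same identity, since writing $f=\int f(t)\,\Delta_t\,dt$ gives $\|G\|_{L^1}\le\int|f(t)|\,\|u_t\|_{L^1}\,dt$. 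Your version is more elementary (no measure theory, no semicontinuity argument) and stays entirely within $L^1$, which is all the lemma claims; the paper's version has the advantage of yielding the same bound for arbitrary finite signed measures, not just densities.
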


\begin{proof}
  We will prove the lemma is true on the space of all measures in the interval. Then we
  can view any measure as a combination of point masses, using that $1-\pi_\delta$ is a linear
  operator on signed measures, and $W$ is a norm on zero-average measures.

  Let $\Delta_t$ be the atomic measure centered in $t$ and with weight $1$ (Kronecker's
  $\delta_t$, we use capital $\Delta$ to avoid confusion), then
  $(1-\pi_\delta)\Delta_t = \Delta_t-\delta^{-1}\chi_I$, where $I=(p_i,p_{i+1})$ is the
  interval of the $\delta$-sized partition containing $t$. To compute its $W$-norm we need
  to compute the $L^1$ norm of
\begin{align*}
  u_t(x) &= \int_0^x \left(\Delta_t(y) - \delta^{-1}\chi_I(y)\right)  \mathrm{d}y \\
         &= \left\{ \begin{array}{cl}
                      \delta^{-1}(p_i-x) & \text{for }x\in[p_i,t], \\
                      \delta^{-1}(p_{i+1}-x) & \text{for }x\in[t,p_{i+1}], \\
                      0 & \text{elsewhere}.
                    \end{array} \right.
\end{align*}
Its $L^1$ norm is computed as 
\begin{align*}
 \|u_t\|_{L^1} &= \delta^{-1}\int_{p_i}^{t} (x-p_i) \mathrm{d} x 
               + \delta^{-1}\int_{t}^{p_{i+1}} (p_{i+1}-x) \mathrm{d} x \\
              &= \delta^{-1}\frac{1}{2}(t-p_i)^2 + \delta^{-1}\frac{1}{2}(p_{i+1}-t)^2 \\
              &\leq \frac{\delta}{2}
\end{align*}
because it is a quadratic function that reaches its maximum for $t\in\{p_i,p_{i+1}\}$,
where its value is exactly $\delta/2$.

Now, the general case. We are applying a linear operator and a norm on measures,
consequently we are applying a weak-$\ast$ lower semi-continuous function. We verified
that such function is $\leq \delta/2$ on atomic measures, and this holds for all finite
combinations of atomic measures. Since finite combinations of atomic measures are
weak-$\ast$ dense in the space of all measure we have the lemma.
\end{proof}

\begin{proof}[Proof of Prop. \ref{prosop}]
By Lemma \ref{L1} and Lemma \ref{lem1mpLW} we have
\[
   \|N_\xi(1-\pi_\delta)\|_{L^1} \leq
   \|1-\pi_\delta\|_{L^1\rightarrow{}W} \cdot \|N_\xi\|_{W\rightarrow L^1} \leq
   \frac{1}{2}\delta\xi^{-1}\Var(\rho). \qedhere
\]
\end{proof}

\subsubsection{An estimate for $\Vert (1-\protect\pi _{\protect\delta %
})|_{X}\Vert _{\mathrm{Var}\rightarrow W}$}

\begin{lemma}
\label{idmpVarW} We have 
\begin{equation*}
\|(1-\pi_\delta)\|_{\mathrm{Var}(X)\rightarrow W(X)} \leq \frac{\delta^2}{8}.
\end{equation*}
for each $X\subseteq [0,1]$ that is a union of intervals of the partition.
\end{lemma}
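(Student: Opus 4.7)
The plan is to decompose the problem along the partition. Write $X = \bigsqcup_j I_j$ as the disjoint union of the partition intervals of size $\delta$ that compose it, and for $f$ supported in $X$ with $\Var_X(f) = \sum_j v_j$ (where $v_j := \Var_{I_j}(f)$), note that $g := (1 - \pi_\delta) f$ equals $f - \bar f_{I_j}$ on each $I_j$ (with $\bar f_{I_j}$ the Lebesgue average of $f$ on $I_j$) and vanishes on $X^c$. Since $\int_{I_j} g = 0$ for every $j$, the primitive $F(x) := \int_0^x g(t)\,dt$ vanishes identically on $X^c$ and at both endpoints of every $I_j$. Hence $\|g\|_{W(X)} = \|F\|_{L^1} = \sum_j \|F\|_{L^1(I_j)}$, and it suffices to prove the one-interval estimate $\|F\|_{L^1(I_j)} \leq (\delta^2/8)\,v_j$.

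By translation, this reduces to the sub-lemma on $I = [0,\delta]$: if $F$ is absolutely continuous with $F(0) = F(\delta) = 0$ and $F' = g$ has total variation $v$ on $I$, then $\|F\|_{L^1(I)} \leq (\delta^2/8)\,v$. Using that the zero-mean condition $\int_0^\delta g = F(\delta) - F(0) = 0$ holds automatically, one can write
\[
  F(x) = \int_0^\delta \bigl(1_{[0,x]}(s) - x/\delta\bigr)\,g(s)\,ds,
\]
where the bracketed function (in $s$) has mean zero on $[0,\delta]$; its primitive in $s$ is the continuous kernel
\[
  G_*(x, s) = \begin{cases} s(\delta - x)/\delta & \text{if } s \leq x, \\ x(\delta - s)/\delta & \text{if } s \geq x, \end{cases}
\]
which vanishes at $s = 0$ and $s = \delta$ and is precisely the Green's function of $-u''$ on $[0,\delta]$ with Dirichlet boundary conditions. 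Letting $\mu := dg$ denote the signed Stieltjes measure of total variation $v$ associated with $g$, integration by parts then yields $F(x) = -\int_{[0,\delta]} G_*(x, s)\,d\mu(s)$.

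Taking absolute values, applying Fubini, and computing the integral of $G_*$ in the $x$ variable give
\[
  \|F\|_{L^1(I)} \leq \int_{[0,\delta]} \Bigl(\int_0^\delta G_*(x, s)\,dx\Bigr) d|\mu|(s) = \int_{[0,\delta]} \frac{s(\delta - s)}{2}\, d|\mu|(s),
\]
since an elementary computation gives $\int_0^\delta G_*(x, s)\,dx = s(\delta - s)/2$. The maximum of $s(\delta - s)/2$ on $[0,\delta]$ is $\delta^2/8$, attained at $s = \delta/2$; therefore $\|F\|_{L^1(I)} \leq (\delta^2/8)\,|\mu|([0,\delta]) = (\delta^2/8)\,v$, and summing over $j$ concludes the proof.

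The main obstacle is extracting the sharp constant $\delta^2/8$: a naive bound combining $\|F\|_\infty \leq \|g\|_{L^1}/2$ with Lemma \ref{Lemma3} (which gives $\|g\|_{L^1} \leq (\delta/2)\,v$) only yields $\|F\|_{L^1} \leq (\delta^2/4)\,v$. The Green's function representation is what allows exploiting both zero-boundary conditions of $F$ simultaneously, and the bound is sharp, saturated when $\mu = v\,\delta_{\delta/2}$, i.e., when $g$ is the two-step function equal to $\pm v/2$ on the halves of $I$, producing a triangular $F$ of area exactly $v\delta^2/8$.
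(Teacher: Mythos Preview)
Your proof is correct and takes a genuinely different route from the paper's. The paper argues geometrically: on each sign-constant subinterval $[a,b]$ of $I$ (where the primitive $G$ vanishes at both ends), it bounds $|G|$ above by the two linear functions $u(x-a)$ and $(v-u)(b-x)$ coming from the derivative bounds $u-v\leq G'\leq u$, producing a triangle of height $h=(b-a)u(v-u)/v$ and area $(b-a)^2 u(v-u)/(2v)$; summing over these subintervals and then optimizing over $u$ yields $\delta^2 v/8$. Your argument instead represents $F$ via the Dirichlet Green's kernel $G_*(x,s)$ and integrates by parts against $d\mu=dg$, reducing the estimate to the scalar inequality $\int_0^\delta G_*(x,s)\,dx = s(\delta-s)/2\leq \delta^2/8$.

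Your approach buys a cleaner path to the sharp constant (no optimization step) and makes the extremizer transparent: a Dirac mass of $d\mu$ at $s=\delta/2$, corresponding exactly to the two-step $g$ you describe. It is also more robust with respect to the structure of $g$, since nothing is assumed about sign-constant intervals of $F$. The paper's argument is more elementary in the sense of avoiding Stieltjes integration by parts and the Green's function, but requires a little care (left implicit there) when $F$ changes sign several times inside $I$; one needs $\sum_i (b_i-a_i)^2\leq \delta^2$ to assemble the per-triangle bounds, which is true but not stated. Both proofs localize identically to single partition intervals and agree on the decomposition $\Var_X(f)=\sum_j v_j$.
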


\begin{proof}
  Let $g\in L^{1}$, let us estimate $\Vert (1-\pi _{\delta })g\Vert _{W}$; assume $g$ to
  have support contained in an interval of the partition $I$, and subtracting
  $\pi _{\delta }g$ assume its average in $I$ to be $0$, and as usual let $G=\int
  g$. Assume the variation to be $v$, and the maximum of $G'$ be $%
  u$.
  Then we have $u-v\leq G^{\prime }\leq u$. Assume $a<b\in I$ are points such that
  $G(a)=G(b)=0$ and $G$ is nonnegative (the nonpositive case being symmetrical),
  then $G$ is bounded by the functions
\begin{equation*}
u(x-a),\qquad (v-u)(b-x).
\end{equation*}%
These linear functions form a triangle of height $h$ that satisfies 
\begin{equation*}
h/u+h/(v-u)=b-a\leq \delta ,
\end{equation*}%
and therefore $h\leq \delta u(v-u)/v$. An estimate for the integral of $G$
over $[a,b]$ is therefore obtained multiplying by $(b-a)/2$, and its
integral over $I$ is therefore bounded by 
\begin{equation*}
\frac{\delta ^{2}u(v-u)}{2v}.
\end{equation*}%
Deriving with respect to the parameter $u$ as usual, we have that the
maximum is attained for $u=v/2$, and is equal to $v\delta ^{2}/8$. Since $v$
was the variation in the interval of the partition, we proved the lemma.
\end{proof}

\begin{remark}
In the proof we just used the total variation of $g$ in the \emph{interiors}
of the intervals $I\in\Pi$, disregarding the possible jumps at the boundary
between different intervals of the partition.
\end{remark}

\subsubsection{An estimate for $\Vert L\Vert _{W(I)\rightarrow W}$}

We give here an estimate for $\Vert L\Vert _{W(I)\rightarrow W}$, that is
the $W$ norm of $Lf$ for a function (having zero average and) whose support
is contained in an interval $I$. This estimate is required in the proof of
Lemma \ref{lemma_NpL1pf}.

\begin{lemma}
\label{LemmaW}We have 
\begin{equation*}
\Vert L\Vert _{W(I)\rightarrow W}\leq \Vert T^{\prime }\Vert _{L^{\infty
}(I)}.
\end{equation*}
\end{lemma}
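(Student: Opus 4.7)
The plan is to prove the bound via a Kantorovich--Rubinstein style duality: characterize the $W$ norm of zero-average densities as the supremum of $\int \phi g$ over Lipschitz test functions $\phi$, and then transfer the test function through $L$ using the pullback identity $\int \phi \cdot Lf = \int (\phi\circ T) f$.

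First I would set up the duality. For any $g\in L^1([0,1])$ with $\int_0^1 g=0$, letting $G(x)=\int_0^x g(t)\,dt$, integration by parts gives $\int \phi g = -\int \phi' G$ for any Lipschitz $\phi$, so that $|\int \phi g|\leq \mathrm{Lip}(\phi)\,\|G\|_{L^1}=\mathrm{Lip}(\phi)\,\|g\|_W$; taking $\phi' = -\mathrm{sign}(G)$ shows equality in the sup. The same argument applied on each component $I_j$ separately gives, for $f\in W(I)$ with $I=\bigsqcup_j I_j$ and $f$ having zero mean on each $I_j$, the characterization
\[
\|f\|_W \;=\; \sup\Bigl\{ \Bigl|\int_I h\,f\,dx\Bigr| : h \text{ is Lipschitz on each } I_j \text{ with } \mathrm{Lip}(h|_{I_j})\leq 1 \Bigr\}.
\]
(The key point is that the antiderivative of $f$ vanishes at the endpoints of each $I_j$, killing the boundary terms.)

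Next, observe that $Lf$ automatically has zero mean on $[0,1]$ since $L$ preserves integrals and $\int f = 0$. Thus $\|Lf\|_W = \sup_\phi |\int_0^1 \phi \cdot Lf\,dx|$ where $\phi$ ranges over Lipschitz functions on $[0,1]$ with $\mathrm{Lip}(\phi)\leq 1$. By the defining duality of the transfer operator,
\[
\int_0^1 \phi\cdot Lf\,dx = \int_0^1 (\phi\circ T)\,f\,dx = \int_I (\phi\circ T)\,f\,dx.
\]
Since $T$ is piecewise $C^1$ monotonic with branches $B_i$ (Setting~\ref{set2}), on each $B_i\cap I_j$ the function $\phi\circ T$ is Lipschitz with constant at most $\mathrm{Lip}(\phi)\cdot\|T'\|_{L^\infty(B_i\cap I_j)}\leq \|T'\|_{L^\infty(I)}$. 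Because $I$ is a union of intervals of the finer partition and $\Pi$ is chosen compatibly with the monotonicity branches (as used in the surrounding applications), the relevant pieces of $I_j$ lie inside a single branch, so $\phi\circ T$ is Lipschitz on each $I_j$ with constant at most $\|T'\|_{L^\infty(I)}$.

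Applying the duality characterization to $f$ with test function $h=(\phi\circ T)/\|T'\|_{L^\infty(I)}$ (which is admissible) yields
\[
\Bigl|\int_I (\phi\circ T)\,f\,dx\Bigr| \leq \|T'\|_{L^\infty(I)}\cdot\|f\|_W,
\]
and taking the supremum over $\phi$ with $\mathrm{Lip}(\phi)\leq 1$ gives $\|Lf\|_W \leq \|T'\|_{L^\infty(I)}\cdot\|f\|_W$, proving the lemma. The main obstacle is the bookkeeping at branch boundaries inside $I$: if a component $I_j$ were to straddle a discontinuity of $T$, $\phi\circ T$ could fail to be Lipschitz on $I_j$; handling this cleanly relies on the compatibility between $\Pi$ and the branch partition built into the setting.
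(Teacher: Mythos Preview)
Your duality approach is genuinely different from the paper's. The paper computes the antiderivative of $Lh$ directly: writing $H=\int_0^{\cdot}h$, it uses $\int_0^x Lh = \sum_i H(T_i^{-1}(x))$ (extended constantly off each branch image), changes variables $x=T_i(y)$ to get $\sum_i\int_0^1 |H(T_i^{-1}(x))|\,dx = \int_0^1 |H(y)|\,|T'(y)|\,dy$, and bounds this by $\|T'\|_{L^\infty(I)}\|h\|_W$ since $H$ is supported in $I$. Your route is more conceptual and makes the role of $T'$ as a Lipschitz dilation factor transparent; the paper's is more hands-on and avoids setting up the dual characterization of $\|\cdot\|_W$.

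One correction to your closing paragraph: the compatibility between $\Pi$ and the branch partition that you invoke is \emph{not} in the paper---neither Setting~\ref{set2} nor Lemma~\ref{lemma_NpL1pf} imposes it ($\Pi$ is only assumed to be a uniform refinement of the $\delta$-grid). What actually makes your argument go through when a component $I_j$ meets a branch point is continuity of $T$: if $T$ is continuous (as the BZ map is) then $\phi\circ T$ is continuous across the branch point and, being Lipschitz on each side with constant at most $\|T'\|_{L^\infty(I)}$, is Lipschitz on all of $I_j$ with that constant---no partition compatibility needed. For a genuinely discontinuous $T$, which Setting~\ref{set2} allows, both your argument and the paper's identity $\int_0^x L_ih = H(T_i^{-1}(x))$ (which silently drops a term $-H(a_i)$) require the extra hypothesis that no branch endpoint lies in the interior of $I$.
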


\begin{proof}
Observe that for each function $h\in L^{1}$, and putting $H=\int h$ so that $%
\Vert h\Vert _{W}=\Vert H\Vert _{L^{1}}$, we have 
\begin{align*}
  \int Lh& =\sum_{i}\int_{0}^{x}\frac{h(T_{i}^{-1}(t))}{T^{\prime
           }(T_{i}^{-1}(t))}\mathrm{d}t \\
         & =\sum_{i}\int_{0}^{T_{i}^{-1}(x)}h(y)\mathrm{d}y\qquad (\text{via }%
           t=T_{i}(y)) \\
         & =\sum_{i}H(T_{i}^{-1}(x))
\end{align*}
(we assume the above integral to be extended as a constant value for $x$ outside the image
$T_i$).

Therefore
\belowdisplayskip=-12pt %to have the qedhere at the right place
\begin{align*}
\Vert Lh\Vert _{W}& =\left\Vert \int Lh\right\Vert _{L^{1}} \\
& \leq\sum_{i}\int_{0}^{1}|H(T_{i}^{-1}(x))|\mathrm{d}x \\
& \leq\sum_{i}\int_{T_{i}^{-1}([0,1])}|H(y)|\cdot T^{\prime }(y)\mathrm{d}%
y\qquad (\text{via }x=T_{i}(y)) \\
& \leq\int_{0}^{1}|H(y)|\cdot T^{\prime }(y)\mathrm{d}y \\
& \leq \Vert H\Vert _{L^{1}}\cdot \Vert T^{\prime }\Vert _{L^{\infty }(%
\mathrm{Supp}(H))} \\
& \leq \Vert h\Vert _{W}\cdot \Vert T^{\prime }\Vert _{L^{\infty }(\mathrm{%
Supp}(I))}.
\end{align*}
\end{proof}

\subsection{Variation estimates}

In this section we collect several variation estimates which are used in
Section~\ref{firstalgo}.

\subsubsection{The variation of $Lg$ in an interval $I$}

\label{varLg}

We recall here how we can estimate the variation of $Lg$ in an interval, the
estimate will be used computationally for estimating the variation of $L%
\tilde{f}$ in intervals of some partition. This estimate is used to compute
the bound provided by Lemma \ref{lemma_N1pLf}.

\begin{lemma}[Local variation inequality]
\label{varLf} Let $I\subseteq [0,1]$ be an interval. Let $L_{i}g$ be the
component of $Lg$ coming from the $i$-th branch, defined in (\ref{Li}). We
have $\mathrm{Var}_{I}(Lg)\leq \sum_{i}\mathrm{Var}_{I}(L_{i}g),$ and the
variation of each component can be estimated as 
\begin{align*}
\mathrm{Var}_{I}(L_{i}g)\leq& \mathrm{Var}_{T_{i}^{-1}(I)}(g)\cdot
\left\Vert \frac{1}{T^{\prime }}\right\Vert _{L^{\infty
}(T_{i}^{-1}(I))}+\Vert g\Vert_{L^{1}(T_{i}^{-1}(I))}\cdot \left\Vert \frac{%
T^{\prime\prime}}{T^{\prime 2}}\right\Vert _{L^{\infty }(T_{i}^{-1}(I))} \\
&+\sum_{y\in \partial Dom(T_i):T(y)\in I}\left|\frac{g(y)}{T^{\prime }(y)}%
\right|.
\end{align*}
\end{lemma}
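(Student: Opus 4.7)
The plan is to decompose $\Var_I(L_i g)$ into a contribution from the interior of $T_i(I_i)\cap I$, where $L_i g$ is smooth enough to compute via the chain rule, plus jump contributions at the images of the endpoints of $I_i$. Throughout I would first assume $g$ is absolutely continuous on $T_i^{-1}(I)$ (so that $\Var = \int|g'|$), and then recover the general bounded variation case by approximation, since both sides of the inequality are lower semicontinuous with respect to $L^1$ convergence of $g$ when $\Var(g)$ is kept under control.

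First I would handle the smooth part. On $T_i(I_i)\cap I$ the branch inverse $T_i^{-1}$ is $C^2$ and monotonic, so with $y = T_i^{-1}(x)$, $(T_i^{-1})'(x) = 1/T'(y)$, the chain rule gives
\[
(L_i g)'(x) = \frac{g'(y)}{T'(y)^2} - \frac{g(y)\,T''(y)}{T'(y)^3}.
\]
The variation on this set is $\int_{T_i(I_i)\cap I}|(L_i g)'(x)|\,dx$, and the change of variables $x = T_i(y)$ (with $dx = |T'(y)|\,dy$) turns this into
\[
\int_{T_i^{-1}(I)}\left|\frac{g'(y)}{T'(y)} - \frac{g(y)\,T''(y)}{T'(y)^2}\right|dy.
\]
The triangle inequality and pulling out the $L^\infty$ norms of $1/T'$ and $T''/T'^2$ on $T_i^{-1}(I)$ yield the first two summands of the claim, using $\int|g'| = \Var(g)$ on each maximal interval of absolute continuity.

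Second, I would account for jumps. Since $L_i g$ is extended by zero outside $T_i(I_i)$, at each image $T(y)$ of an endpoint $y \in \partial \mathrm{Dom}(T_i)$ with $T(y)\in I$, the function $L_i g$ passes from the interior limit $\lim_{z\to y}g(z)/T'(z)$ to $0$. Each such transition contributes a jump of size $|g(y)/T'(y)|$ (interpreted as a one-sided limit where needed) to $\Var_I(L_i g)$, producing the final sum in the stated bound. Adding up the absolutely continuous contribution and the finite sum of jumps gives $\Var_I(L_i g)$ exactly when $g$ is absolutely continuous, and the bound in general.

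The main obstacle is not any single estimate but the careful bookkeeping at the boundary of $T_i(I_i)\cap I$: making sure that jumps are attributed only to endpoints of $\mathrm{Dom}(T_i)$ mapping into $I$, that endpoints of $I$ itself give no extra contribution (since $\Var_I$ only sees sampling inside $I$), and that the smooth-part integral is computed over exactly $T_i^{-1}(I)$ so the $L^\infty$ norms and $L^1$/$\Var$ norms on the right all have the same support. Once this is set up cleanly, the density argument that extends from absolutely continuous $g$ to bounded variation $g$ is standard and the inequality follows.
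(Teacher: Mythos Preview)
Your proposal is correct and follows essentially the same approach as the paper: compute $(L_i g)'$ via the chain rule on the interior of $T_i(I_i)\cap I$, change variables $x=T_i(y)$ to pull the integral back to $T_i^{-1}(I)$, add the boundary jumps at images of $\partial\,\mathrm{Dom}(T_i)$, and pass from smooth $g$ to general BV by density. The paper's proof is slightly terser (starting from $g\in C^1$ rather than absolutely continuous) but the argument is the same.
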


\begin{proof}
Let $g\in C^1([0,1])$, the bounded variation case following by density of $C^1$ in $BV$:
\begin{equation*}
(L_ig)^{\prime }(x)=\frac{g^{\prime }(T_{i}^{-1}(x))}{%
T^{\prime}(T_i^{-1}(x))^{2}}-\frac{g(T_{i}^{-1}(x))\cdot T^{\prime \prime
}(T_{i}^{-1}(x))}{T^{\prime }(T_{i}^{-1}(x))^{3}}.
\end{equation*}%
And consequently the variation of $g$ in an interval $I$ can be
bounded as
\begin{align*}
\int_{I}\left\vert \frac{g^{\prime }(T_{i}^{-1}(x))}{%
T^{\prime}(T_i^{-1}(x))^{2}}\right\vert & +\left\vert \frac{g(T_{i}^{-1}(x))
\cdot T^{\prime \prime }(T_{i}^{-1}(x))}{T^{\prime
}(T_{i}^{-1}(x))^{3}}\right\vert \mathrm{d}x= \\
\int_{T_i^{-1}(I)}\left\vert \frac{g^{\prime }(y)}{T^{\prime }(y)}%
\right\vert & +\left\vert \frac{g(y)\cdot T^{\prime \prime }(y)}{%
T^{\prime 2}}\right\vert \mathrm{d}y
\end{align*}%
replacing $T^{-1}(x)$ by $y$, and $\mathrm{d}x$ by $T^{\prime }(y)\mathrm{d}%
y $,
as usual. 

Taking into account the value of $L_ig$ at the boundary
of its support we obtain the estimate.
Please remark that if $T_i$ is not full branch the support of $L_i g$
is strictly contained in $[0,1]$
\end{proof}
\begin{comment}
\begin{algorithm}[H]
\caption{Returns $varLiG[I,i]$, an estimate of $\Var_I(L_i g)$ for
   $I\in\Pi$ for all branches $i$, and $varLG[I]$, an estimate of $\Var_I(L g)$}
\label{alg_varLiG}
\begin{algorithmic}
\REQUIRE $\delta$ \COMMENT{the partition size of $\pi_\delta$}, \\
   $g$ \COMMENT{a piecewise constant function in intervals of size $\delta$},\\
   $I$ \COMMENT{an interval of the partition $\Pi$}.
\algrule
\FORALL{$i\in{}Branches(T)$}
     \STATE $TiI \gets T_i^{-1}(I)$
     \STATE $invTPrime \gets \|1/T'\|_{L^\infty(TiI)}$ \COMMENT{a bound of}
     \STATE $distortion \gets \|T''/(T')²\|_{L^\infty(TiI)}$ \COMMENT{a bound of}
     \STATE $varG \gets \Var_{TiI}(g)$
     \STATE $L1G \gets \|g\|_{L¹(TiI)}$
     \STATE $boundary \gets Sum(|g(y)/T'(y)|, y \in \partial Dom(T_i):T(y)\in I)$
     \STATE $varLiG[I,i] \gets varG\cdot invTPrime + L1G \cdot distortion + boundary$
\ENDFOR
\STATE $varLG[I] = Sum(varLiG[I,i], i\in{}Branches(T))$
\RETURN $\{varLiG[I,i]: i\in{}Branches(T) \}$, $varLG[I]$
\end{algorithmic}
\end{algorithm}

\bigskip
\end{comment}
\subsubsection{The variation of $N_{\protect\xi }g$}

\label{varNLf}

We deduce an algorithm for estimating $\mathrm{Var}_{I}(N_{\xi }g)$ provided
that we have enough information about $g$.
% This part is kernel-specific in the sense that we will assume that $\rho _{\xi }$ is the function $\xi^{-1}\chi_{[-\xi /2,\xi /2]}(x)$.

\begin{lemma}
\label{Lemma6} Assume $\rho _{\xi }=\xi ^{-1}\chi_{[-\xi /2,\xi /2]}(x)$.
For any interval $I=[a,b]$ we have 
\begin{equation*}
\mathrm{Var}_{I}(N_{\xi }g)\leq \xi ^{-1}\cdot \min \left\{ \Vert g\Vert
_{L^{1}(I-\xi /2 \cup I+\xi /2)}Var(\rho ),|I| \cdot \mathrm{Var}_{[a-\xi /2,b+\xi
/2]}(g)||\rho ||_\infty \right\} .
\end{equation*}
Hence in the case where $\rho _{\xi }=\xi^{-1}\chi_{[-\xi /2,\xi /2]}$
\begin{equation*}
\mathrm{Var}_{I}(N_{\xi }g)\leq \xi ^{-1}\cdot \min \left\{ \Vert g\Vert
_{L^{1}(I-\xi /2 \cup I+\xi /2)},|I| \cdot \mathrm{Var}_{[a-\xi /2,b+\xi
/2]}(g)\right\} .
\end{equation*}

\end{lemma}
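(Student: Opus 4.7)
The proof would treat the two bounds in the minimum separately, since they exploit two different aspects of the convolution $N_\xi$: the first controls variation via the variation of $\rho$, the second uses the smoothing power of convolving with an $L^\infty$ kernel.

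For the first bound, I would start from the definition of variation and bound a telescoping sum. For any partition $a=x_0<x_1<\dots<x_k=b$ of $I$, write
\[
\sum_j |N_\xi g(x_{j+1})-N_\xi g(x_j)| \leq \int |g(y)|\,\Big(\sum_j |\rho_\xi(x_{j+1}-y)-\rho_\xi(x_j-y)|\Big)\,dy,
\]
swap sum and integral, and note that the inner sum is $\leq \mathrm{Var}(\rho_\xi)=\xi^{-1}\mathrm{Var}(\rho)$ for each fixed $y$. For the domain of integration, observe that $\rho_\xi$ being supported in $[-\xi/2,\xi/2]$ means $y\mapsto \rho_\xi(x_j-y)$ can contribute a jump only when one of the discontinuity points $y\pm\xi/2$ falls inside $I$, i.e. when $y\in (I-\xi/2)\cup(I+\xi/2)$. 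This localization yields
\[
\mathrm{Var}_I(N_\xi g)\leq \xi^{-1}\mathrm{Var}(\rho)\,\|g\|_{L^1((I-\xi/2)\cup(I+\xi/2))}.
\]

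For the second bound, I would use the fact that convolution with a bounded kernel yields an absolutely continuous function, so $\mathrm{Var}_I(N_\xi g)=\int_I |(N_\xi g)'|\,dx$. In the uniform case $\rho_\xi=\xi^{-1}\chi_{[-\xi/2,\xi/2]}$ we have the clean identity $N_\xi g(x)=\xi^{-1}\int_{x-\xi/2}^{x+\xi/2}g(y)\,dy$, giving $(N_\xi g)'(x)=\xi^{-1}\bigl(g(x+\xi/2)-g(x-\xi/2)\bigr)$ almost everywhere. Thus
\[
|(N_\xi g)'(x)|\leq \xi^{-1}\mathrm{Var}_{[x-\xi/2,x+\xi/2]}(g)\leq \xi^{-1}\mathrm{Var}_{[a-\xi/2,b+\xi/2]}(g),
\]
and integrating the constant bound over $I$ gives the factor $|I|$. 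For general $\rho$, the corresponding $L^\infty$ bound on $(N_\xi g)'$ follows from writing the derivative as a Stieltjes integral against $dg$ controlled by $\|\rho_\xi\|_\infty=\xi^{-1}\|\rho\|_\infty$, yielding the stated $\|\rho\|_\infty$ factor.

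The two bounds then combine as a minimum, and the second display of the statement is just the specialization with $\rho=\chi_{[-1/2,1/2]}$, for which $\|\rho\|_\infty=1$. The main (mild) obstacle is handling the reflecting boundary convolution $\hat\ast$ rather than ordinary $\ast$ when $I$ is close to $0$ or $1$: I would dispatch this using Definition \ref{def:hatconv}, which expresses $N_\xi g$ as the pushforward under $\pi$ of the ordinary convolution of extensions, so that variation on $I\subset[0,1]$ is dominated by variation of the extended convolution on a corresponding set in $\mathbb{R}$, bringing us back to the case already treated.
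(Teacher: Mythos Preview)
Your proposal is correct and follows essentially the same two-branch strategy as the paper: bound $\mathrm{Var}_I(N_\xi g)$ first by moving the derivative onto $\rho_\xi$ and localizing the $y$-integration to $(I-\xi/2)\cup(I+\xi/2)$, then alternatively by moving the derivative onto $g$ and using $\|\rho_\xi\|_\infty$. The only cosmetic differences are that the paper writes both bounds as $\int_I|(\rho_\xi\ast g)'|\,dx$ and applies Fubini, whereas you use a discrete partition sum for the first bound and the explicit formula $(N_\xi g)'(x)=\xi^{-1}\big(g(x+\xi/2)-g(x-\xi/2)\big)$ for the second; and you explicitly flag the reflecting-boundary issue, which the paper's proof simply works around by computing with the ordinary convolution on $\mathbb{R}$.
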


\begin{proof}
  For an interval of the partition $I$ (centered in $t$, say), let us set
  \[
    \overline{I}=I-\xi /2 \cup I+\xi /2,
  \]
 we have 
\begin{align*}
\int_{I}|(\rho _{\xi }\ast g)^{\prime}|\mathrm{d}x& \leq \int_{I}\left( \int_{\overline{I}}
|\rho _{\xi }^{\prime }(x-y)g(y)|\mathrm{d}y\right) \mathrm{d}x \\
& \leq \int_{\overline{I}} |g(y)|\int_{I}|\rho _{\xi }^{\prime }(x-y)|\mathrm{d}x\mathrm{d}y
\\
& \leq \int_{\overline{I}} |g(y)|\Var(\rho _{\xi })\mathrm{d}y \\
%& \leq \int_{(I-\xi /2)\cup (I+\xi /2)}|g(y)|\xi ^{-1}\mathrm{d}y \\
& \leq \xi ^{-1}\Var(\rho )\cdot \big(\Vert g\Vert _{L^{1}(I-\xi /2)}+\Vert g\Vert
_{L^{1}(I+\xi /2)}\big).
\end{align*}

Symmetrically putting $I=[a,b]$ we have 
\begin{align*}
\int_{I}|(\rho _{\xi }\ast g)^{\prime}|\mathrm{d}x& \leq \int_{I}\left( \int
|\rho _{\xi }(x-y)g^{\prime }(y)|\mathrm{d}y\right) \mathrm{d}x \\
& \leq \int_{I} ||\rho_\xi||_\infty  \int_{x-\xi /2}^{x+\xi /2}  |g^{\prime }(y)|\mathrm{d}y\mathrm{d%
}x \\
& \leq \int_{I}\xi ^{-1} ||\rho_\xi||_\infty  \int_{a-\xi /2}^{b+\xi /2}  |g^{\prime }(y)|\mathrm{d}y\mathrm{d%
}x \\
& \leq \int_{I}\xi^{-1}||\rho_\xi||_\infty \mathrm{d}x \cdot \int_{a-\xi /2}^{b+\xi /2}  |g^{\prime }(y)|\mathrm{d}y \\
& \leq |I| \xi ^{-1}||\rho_\xi||_\infty \Var_{[a-\xi /2,b+\xi /2]}(g). \qedhere
\end{align*}
\end{proof}

\begin{comment}
\begin{algorithm}[H]
\caption{Returns $varNxiLG[I]$, an estimate of $\Var_I(N_\xi L g)$ for $I\in\Pi$}
\label{alg_varNxiLG}
\begin{algorithmic}
\REQUIRE $\Pi$ \COMMENT{a partition}, \\
   $\xi$ \COMMENT{noise size},\\
   $varG[I]$ \COMMENT{bound of $\Var_I(g)$ for each $I\in\Pi$}, \\
   $measG[I]$ \COMMENT{bound of $\|g\|_{L¹(I)}$ for each $I\in\Pi$}, \\
   $I=[a,b]$ \COMMENT{an interval}.
\algrule
\STATE $Bound1 \gets Sum(measLG[J], J\in \Pi \text{ intersecting }(I-\xi/2) \cup (I+\xi /2))$
% \|g\|_{L¹(I-\xi½)}+\|g\|_{L¹(I+\xi½)}$
\STATE $Bound2 \gets Sum(varLG[J], J\in \Pi \text{ intersecting }[a-\xi/2,b+\xi /2])$
\STATE $varNxiG[I] = \xi^{-1}\cdot \min(Bound1, (b-a) \cdot Bound2)$
\RETURN $varNxiG[I]$
\end{algorithmic}
\end{algorithm}
\end{comment}
\begin{remark}
If the minimum was always obtained as the first part, we end up estimating
the total variation of $N_{\xi}g$ as $2\xi^{-1}=\mathrm{Var}(\rho_{\xi})$,
getting the same bound as in the a-priori estimate. If the second part is
always bigger, the estimate is approximatively $\mathrm{Var}(g)$, with a
small increase due to the fact that we will be integrating the variation
over an interval of size $\xi +\delta $ rather than $\xi $.
\end{remark}

%%% Local Variables: 
%%% mode: latex
%%% TeX-master: "noise_induced_order"
%%% End: 

%\end{document}

\end{document}